\documentclass[12pt]{amsart}
\usepackage{amssymb}
\usepackage{amsfonts}
\usepackage{amsmath}
\usepackage{graphicx}
\usepackage{color}
\setcounter{MaxMatrixCols}{30}
\setcounter{page}{1}
\usepackage{amsmath}
\usepackage{stmaryrd}
\usepackage{epsfig,color}
\usepackage{blindtext}
\usepackage{enumerate}
\usepackage{hyperref}
\usepackage{url}
\usepackage{bbm}
\usepackage{filecontents}
\usepackage{nicefrac,mathtools}
\DeclareGraphicsExtensions{.pdf,.jpeg,.png}
\usepackage{epstopdf}
\usepackage{cancel} 
\usepackage[normalem]{ulem} 
\usepackage{verbatim}		

\usepackage{color}
\usepackage[msc-links, lite]{amsrefs}
\usepackage{geometry}
\geometry{left=3.0cm,right=3.0cm,top=3.2cm,bottom=3.2cm}

\setcounter{section}{0}

\newtheorem{theorem}{Theorem}[section]

\newtheorem{proposition}[theorem]{Proposition}
\newtheorem{lemma}[theorem]{Lemma}
\newtheorem{corollary}[theorem]{Corollary}

\newtheorem{claim}[]{Claim}

\theoremstyle{definition}
\newtheorem{definition}[theorem]{Definition}
\theoremstyle{remark}
\newtheorem{remark}[theorem]{Remark}

\def\m{\mathbf m}
\def\n{\mathbf n}

\def\wti{\widetilde}
\def\mb{\mathbb}

\def\ric{\mathrm{Ric}}
\def\area{\mathrm{Area}}

\def\dist{\mathrm{dist}}

\newcommand{\mc}{\mathcal}
\newcommand{\mf}{\mathbf}
\newcommand{\pps}{\frac{\partial}{\partial s}}
\newcommand{\dv}{\mathrm{div}}

\numberwithin{equation}{section}

\title[Degenerate free boundary minimal hypersurfaces]{Compactness and generic finiteness for free boundary minimal hypersurfaces (II)}
\date{\today}

\author{Zhichao Wang}
\address{Max-Planck Institute for Mathematics, Vivatsgasse 7, 
53111 Bonn, Germany}
\email{wangzhichaonk@gmail.com}

\begin{document}

\begin{abstract}
Given a compact Riemannian manifold with boundary, we prove that the limit of a sequence of embedded, almost properly embedded free boundary minimal hypersurfaces, with uniform area and Morse index upper bound, always inherits a non-trivial Jacobi field. To approach this, we prove a one-sided Harnack inequality for minimal graphs on balls with many holes.
\end{abstract}
\maketitle

\section{Introduction}
\subsection{Main results}
Let $(M^{n+1},\partial M,g)$ be a compact Riemannian manifold with boundary of dimension $3\leq (n+1)\leq 7$. An $n$-submanifold $\Sigma$ is a critical point of the $n$-dimensional area functional if and only if the mean curvature of $\Sigma$ vanishes everywhere and $\Sigma$ meets $\partial M$ orthogonally. Such $n$-submanifolds are called {\em free boundary minimal hypersurfaces} (see Definition \ref{def:fbmh}).

Given a free boundary minimal hypersurface $\Sigma$, the second variation of area functional produces discrete eigenvalues and eigenfunctions in $C^\infty(\Sigma)$. Then the dimension of the maximal subspace of $C^\infty(M)$ that the second variation is negative definite, is called the {\em index} of $\Sigma$, denoted by $\mathrm{index}(\Sigma)$ (cf. \S \ref{sec:pre}). And the eigenfunctions corresponding to the zero eigenvalue are called the {\em Jacobi fields} (see Definition \ref{def:jacobi field}).

Denote by $\mc M(\Lambda,I)$ the space of embedded free boundary minimal hypersurfaces with $\area\leq \Lambda$ and $\mathrm{index}\leq I$.

The compactness of $\mc M(\Lambda,0)$ was firstly studied by Fraser-Li \cite{FL} for $3$-manifolds with non-negative Ricci curvature and convex boundary, and Guang-Li-Zhou \cite{GLZ16} for higher dimensions without curvature assumptions.

Recently, Ambrozio-Carlotto-Sharp \cite{ACS17} proved the compactness of $\mc M(\Lambda, I)$ under additional assumptions. Moreover, they proved that the limit hypersurface has non-trivial Jacobi fields. Later, the compactness result has been proved in \cite{GZ18} for all compact Rimemannian manifolds with boundary. The degeneration of the limit hypersurface has also been obtained when the convergence has multiplicity one. In this paper, we enhance this theorem by considering the case of higher multiplicity in convergence.
\begin{theorem}\label{thm:main thm}
Let $\{\Sigma_k\}\subset \mc M(\Lambda,I)$ locally smoothly converges to $\Sigma\in \mc M(\Lambda,I)$ with multiplicity $m$. Suppose that $m\geq 2$. Then $\Sigma$ has a positive Jacobi field.
\end{theorem}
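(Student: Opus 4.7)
The plan is to produce the positive Jacobi field on $\Sigma$ as the normalized limit of the infinitesimal gap between two sheets of the multi-graph approximating $\Sigma_k$, using the one-sided Harnack inequality promised in the abstract to rule out concentration of the normalization. Because $\Sigma_k\to \Sigma$ locally smoothly with multiplicity $m\ge 2$, on a neighborhood of any point of $\Sigma$ lying away from a (finite) singular set $F\subset \overline\Sigma$ — absorbing the accumulation of almost-proper touching points of the $\Sigma_k$ with $\partial M$ — one can write $\Sigma_k$ as ordered normal graphs $u_k^1\le u_k^2\le \cdots\le u_k^m$ of functions on $\Sigma\setminus F$, each $u_k^j\to 0$ smoothly on compacta. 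Set $w_k = u_k^2 - u_k^1\ge 0$. Subtracting the free boundary minimal hypersurface equations for the two sheets and applying the fundamental theorem of calculus in the coefficients yields
\[
L_k w_k \;=\; 0 \text{ on }\Sigma\setminus F, \qquad \partial_\nu w_k + h^{\partial M}(\nu,\nu)\,w_k \;=\; 0 \text{ on }\partial\Sigma\setminus F,
\]
with linear elliptic operators $L_k$ converging smoothly to the Jacobi operator $L_\Sigma$ of $\Sigma$.

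Next I would fix a reference point $p$ in the regular interior of $\Sigma$ and normalize $\phi_k = w_k/w_k(p)$, so that $\phi_k\ge 0$ and $\phi_k(p)=1$. Granted a local uniform $C^0$ bound on $\phi_k$ on compact subsets of $\overline\Sigma\setminus F$, interior and boundary Schauder estimates give $C^{2,\alpha}_{\mathrm{loc}}$ compactness and, after passing to a subsequence, a non-negative limit $\phi$ solving $L_\Sigma\phi = 0$ on $\Sigma\setminus F$ with the Robin condition on $\partial\Sigma\setminus F$. Since $F$ is finite and hence of codimension at least two in $\Sigma$, and $\phi$ is bounded, a standard removable-singularity argument extends $\phi$ to a smooth Jacobi field on $\overline\Sigma$ with $\phi(p)=1$. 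The interior strong maximum principle together with the Hopf boundary-point lemma for the Robin condition then upgrade $\phi\ge 0$ to $\phi>0$ throughout $\overline\Sigma$, delivering the desired positive Jacobi field.

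The main obstacle is precisely the uniform $C^0$ bound on $\phi_k$; without it the normalized limit $\phi$ could vanish and the argument collapses. The classical route is to chain Harnack's inequality along a cover of $\Sigma$ by small balls to propagate the anchor $\phi_k(p)=1$ to $\phi_k\le C(K)$ on every compact $K\subset \overline\Sigma\setminus F$. However, inside such a ball $\phi_k$ is only defined after excising small disks around the almost-touching points of the $\Sigma_k$ with $\partial M$ — a collection of holes whose number and configuration are not uniformly controlled in $k$ — and the classical Harnack constant degenerates as the holes proliferate. The one-sided Harnack inequality for minimal graphs on balls with many holes, the central new analytic ingredient of the paper, asserts that for such perforated domains the $\sup/\inf$ ratio of non-negative solutions is controlled by the ambient geometry alone, independently of the hole configuration. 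Granting this inequality, chaining it across $\Sigma$ produces the uniform bound on $\phi_k$, and the argument above concludes.
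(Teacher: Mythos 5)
Your global plan---normalize the gap between sheets, pass to a limit, and remove the singularities via a Harnack-type estimate---is the right shape, but there are two genuine gaps that make the proposed argument collapse at exactly the point where the paper has to do hard work.

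First, your description of the ``one-sided Harnack inequality'' is not what the paper proves, and the plan built on it doesn't go through. The paper's estimate (Theorems~\ref{thm:one sided harnark for high dimension} and~\ref{thm:harnack for minimal graphs on minimal surfaces}) bounds the \emph{maximum on the outer circle} by the \emph{minimum on an inner circle} for the graph difference on an annulus-with-holes: $\max_{\partial\mc B(p;R)}(v-u)\leq C\min_{\partial\mc B(p;\wti r)}(v-u)$. It is genuinely directional; the reverse inequality is false (the catenoid in $\mb R^3$ is the obstruction, as the paper notes). In particular it is \emph{not} a $\sup/\inf$ bound on a perforated domain that is ``independent of the hole configuration,'' and it cannot be chained back and forth across $\Sigma$ to anchor $\phi_k(p)=1$. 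Moreover, the domain with holes does not arise from chaining small balls around $\Sigma$: the holes in the paper are $\leq I$ balls obtained by covering the steep-angle set $S_k$ of $\Sigma_k$ near a \emph{single} point $p\in\mc W\cap\partial M$, using the index bound via the curvature estimate. The one-sided Harnack is applied locally, near such a $p$, not as a global tool.

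Second, and more fundamentally, you write ``$F$ is finite and $\phi$ is bounded, so a standard removable-singularity argument applies,'' but the boundedness of $\phi$ \emph{near} $F$ is precisely the content of Lemma~\ref{lem:w is bounded}, and it is not free. A $C^0$ bound on each compact $K\subset\overline\Sigma\setminus F$ (which does follow from the gradient estimate of Lemma~\ref{lem:gradient estimates}) gives constants that a priori degenerate as $K\to\overline\Sigma$. The case you must confront is a touching point $p\in\mathrm{int}(\Sigma)\cap\partial M$, where $\partial M$ sits on one side of $\Sigma$. There the minimal-foliation barrier (\cite{GZ18}*{Claim~D}) controls only the sheet on the $\partial M$-free side, namely $u_k^m$; the opposite sheet $u_k^1$ interacts with $\partial M$ and $\partial\Sigma_k$ and has no such barrier. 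The entire point of the one-sided Harnack is to show, by contradiction, that $-u_k^1$ cannot blow up relative to $u_k^m$ after normalization: one delicately passes the estimate from $\partial\mc B(p;\epsilon)$ down through finitely many hole-scales to a scale $\sim\sqrt{h/\kappa}$, obtaining $\max_{\partial\mc B(p;\epsilon)}(v_t-u_k)\leq Ch/\kappa$, which contradicts $h\leq\max_{\partial\mc B(p;\epsilon)}(v_t-u_k)$ for $\kappa$ large. Your proposal never engages with the touching set, and with $w_k=u_k^2-u_k^1$ (instead of $u_k^m-u_k^1$) the barrier argument on the good side would not even apply. So the boundedness near $F$---which you take as granted---is the theorem, not a routine Harnack chain.
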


The first study of such compactness was due to Choi-Schoen \cite{CS85}, who proved compactness for minimal surfaces with bounded topology in closed three-manifolds with positive Ricci curvature. In higher dimensions, Schoen-Simon-Yau \cite{SSY} and Schoen-Simon \cite{SS} proved interior curvature estimates and compactness for stable closed minimal hypersurfaces with uniform area upper bound. 

Their results were later generalized by Sharp \cite{Sha17} to minimal hypersurfaces with uniform Morse index and area upper bound, which also says that the limit hypersurface is {\em degenerate}, i.e. has non-trivial Jacobi fields. Combining with the Bumpy Metric Theorem given by White \cite{Whi91}, there are only finitely many embedded minimal hypersurfaces with uniform Morse index and area upper bound in manifolds with bumpy metrics. Such results plays an important role in the index estimates of minimal hypersurfaces using min-max construction, proved by Marques-Neves \cite{MN16}.

As a direct consequence of Theorem \ref{thm:main thm}, we obtain the following generic finiteness theorem for free boundary minimal hypersurfaces.
\begin{corollary}
Let $M^{n+1}$ be a compact manifold with boundary and $3\leq(n+1)\leq 7$. Fix $I\in\mb N$ and $\Lambda> 0$. Then for a generic metric on M , there are only finitely many almost properly embedded free boundary minimal hypersurfaces in $\mc M(\Lambda,I)$.
\end{corollary}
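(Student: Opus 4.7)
The plan is to combine Theorem \ref{thm:main thm} with a bumpy metric theorem for free boundary minimal hypersurfaces and the compactness result established in \cite{GZ18}. The strategy is a contradiction argument: if the conclusion fails, we produce a sequence in $\mc M(\Lambda,I)$ to which compactness applies, and then in every possible case of the limiting multiplicity we manufacture a non-trivial Jacobi field, contradicting bumpiness.

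First I would invoke a bumpy metric theorem in the free boundary setting (the analogue of White's result \cite{Whi91}, established in the almost properly embedded free boundary category by Ambrozio-Carlotto-Sharp and subsequently refined), which guarantees that the set of smooth metrics $g$ on $M$ for which every almost properly embedded free boundary minimal hypersurface is non-degenerate, i.e.\ carries no non-trivial Jacobi field, is Baire-generic. Fix such a metric $g$.

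Next, assume toward contradiction that $\mc M(\Lambda,I)$ contains infinitely many distinct elements $\{\Sigma_k\}$. By the compactness theorem of \cite{GZ18} (an extension of \cite{Sha17} and \cite{ACS17} to the free boundary setting), we may pass to a subsequence so that $\Sigma_k$ converges locally smoothly, away from finitely many points, to some $\Sigma\in \mc M(\Lambda,I)$ with integer multiplicity $m\geq 1$. If $m=1$, then for large $k$ each $\Sigma_k$ is a normal graph over $\Sigma$ given by a function $u_k$; since the $\Sigma_k$ are distinct, the normalized functions $u_k/\|u_k\|$ satisfy the linearized free boundary minimal equation, and elliptic estimates yield a smooth limit which is a non-trivial Jacobi field on $\Sigma$ (this is the multiplicity-one case in \cite{ACS17,GZ18}). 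If instead $m\geq 2$, then Theorem \ref{thm:main thm} directly furnishes a positive, hence non-trivial, Jacobi field on $\Sigma$. Either way $\Sigma$ is degenerate, contradicting bumpiness.

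The only genuinely new ingredient is Theorem \ref{thm:main thm}, which handles the higher-multiplicity case that was previously inaccessible; the main technical obstacle in the overall argument is therefore not in this corollary but in Theorem \ref{thm:main thm} itself (and behind it the one-sided Harnack inequality for minimal graphs with many holes mentioned in the abstract). Given those inputs, the corollary reduces to a clean dichotomy on the multiplicity $m$, and the finiteness assertion follows at once.
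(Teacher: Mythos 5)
Your proposal is correct and follows exactly the argument the paper has in mind; indeed the paper states the corollary without a written proof, calling it a ``direct consequence'' of Theorem~\ref{thm:main thm}, and the structure you lay out (bumpy metric theorem plus compactness plus the dichotomy $m=1$ handled by \cite{GZ18}*{Theorem 4.1}, $m\geq 2$ handled by Theorem~\ref{thm:main thm}) is precisely that deduction. The one point worth spelling out in a fully detailed write-up is the exact version of the bumpy-metric theorem being invoked: White's theorem \cite{Whi91} is for the closed case, Ambrozio--Carlotto--Sharp extend it to properly embedded free boundary minimal hypersurfaces, and since $\mc M(\Lambda,I)$ here allows almost properly embedded hypersurfaces with non-empty touching set, one needs the bumpy-metric statement in that broader category (or an argument that generic non-degeneracy of properly embedded ones suffices, together with an observation about how touching arises as a limit) — this is a citation/precision issue rather than a gap in the logic, but it is the one place where the proof is not purely a formal dichotomy.
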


\begin{remark}
We can compare the results between minimal hypersurface with free boundary and closed cases. Fraser-Li's result \cite{FL} is a natural free boundary analog of Choi-Schoen’s result \cite{CS85}; Guang-Li-Zhou \cite{GLZ16} obtained the free boundary version of Schoen-Simon-Yau \cite{SSY} and Schoen-Simon's results \cite{SS}; \citelist{\cite{ACS17},\cite{GZ18}} and Theorem \ref{thm:main thm} together can be seen as a generalization of \cite{Sha17}.
\end{remark}

\subsection{Harnack inequality}\label{subsec:intro:harnack inequalty}
We approach Theorem \ref{thm:main thm} by proving a Harnack inequality. Let $\mc N$ be a minimal hyersurface in $M$. Denote by $\mc B(p;r)$ the geodesic ball in $\mc N$. Denote by $\mc A(p;r,s)=\mc B(p;s)\setminus\mc B(p;r)$. Similarly, the geodesic ball in $M$ is denoted by $B(p;r)$.

Let $\Gamma$ and $\Sigma$ be positive minimal graphs with functions $v,u$ on $\mc A(p;r^2,2\epsilon^2)\setminus\cup_{j=1}^I\mc B(q_j;r^2)$ satisfying
\begin{equation}\label{eq:intro:boundary barrier}
0<v(x)-u(x)\leq C_1(|x|^2+r^2), \ \ \text{ where } |x|=\dist_\Sigma(x,p).
\end{equation}
Then the key ingredient to approach Theorem \ref{thm:main thm}, roughing speaking, is to prove the following:
\begin{theorem}[Theorem \ref{thm:harnack for minimal graphs on minimal surfaces}]\label{thm:intro:harnack}
There exists $C=C(M,\mc N,C_1,I)$, $\epsilon_0=\epsilon_0(M,\mc N,C_1,I)$ so that if $\epsilon<\epsilon_0$, $\{q_j\}_{j=1}^I\subset\mc B(p;\epsilon r)$, then
\begin{equation*}
\max_{\partial \mc B(p;\epsilon^2)}(v-u)(x)\leq C\min_{\partial \mc B(p;2{\epsilon}r)}(v-u)(x).
\end{equation*}
\end{theorem}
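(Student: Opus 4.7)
The plan is to bound $w := v-u$ above by a tailored $L$-supersolution $\Phi$ on $\Omega$, built from explicit capacitary correctors that absorb the $I$ holes, and then to exploit the a priori bound \eqref{eq:intro:boundary barrier} on the outer boundary to pin down the Harnack constant on the hole-free annular region.

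Since $u,v$ are both minimal graphs, the difference $w > 0$ satisfies a linear uniformly elliptic equation $Lw = 0$ on $\Omega$, obtained by linearizing the minimal surface operator along the segment $u + t(v-u)$. The Bombieri--De Giorgi--Miranda gradient estimate for minimal graphs together with \eqref{eq:intro:boundary barrier} gives uniform $C^1$ bounds on $u, v$, so $L$ has coefficients bounded in terms of $(M, \mc N, C_1)$. Moreover, since $q_j \in \mc B(p; \epsilon r)$ and $r^2 \leq \epsilon r$ for $\epsilon_0$ small, every $x \in \partial \mc B(q_j; r^2)$ satisfies $|x| \leq 2\epsilon r$, so by \eqref{eq:intro:boundary barrier},
$$w \leq M_0 := 2C_1 r^2 \quad \text{on}\ \partial \mc B(q_j;r^2),\ j=1,\dots, I,\ \text{ and on }\ \partial \mc B(p;r^2).$$

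The barrier is $\Phi := \Psi + M_0 \sum_j \eta_j$, where $\eta_j$ is an $L$-capacitary supersolution on $\mc B(q_j;\rho) \setminus \mc B(q_j;r^2)$ with $\rho = \epsilon r/(10I)$, equal to $1$ on the inner sphere and $0$ on the outer, extended by zero outside; and $\Psi$ is a positive $L$-supersolution on the hole-free annulus $\mc A(p;2\epsilon r, 2\epsilon^2)$ with $\Psi \geq w$ on $\partial \mc B(p;2\epsilon^2)$ (bounded by $O(\epsilon^4)$ via \eqref{eq:intro:boundary barrier}) and $\Psi \geq m := \min_{S_2} w$ on $S_2 := \partial\mc B(p;2\epsilon r)$. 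At scale $\rho$, $L$ is a small perturbation of the Euclidean Laplacian, so comparison gives $\eta_j(x) \lesssim (r^2/|x-q_j|)^{n-2}$ for $n\geq 3$ (with a logarithmic analog for $n=2$); in particular $\eta_j \equiv 0$ on $S_1 := \partial \mc B(p;\epsilon^2)$ once $\epsilon^2 > 2\epsilon r > \rho$. Checking boundary components, $\Phi \geq w$ on $\partial \Omega$; by the maximum principle (valid for $\epsilon < \epsilon_0$, where the principal eigenvalue of $L$ on $\mc B(p;2\epsilon^2)$ is positive), $\Phi \geq w$ throughout $\Omega$. Since each $\eta_j$ vanishes on $S_1$, this gives $\max_{S_1} w \leq \max_{S_1} \Psi$.

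The main obstacle is the final estimate $\max_{S_1} \Psi \leq C \cdot m$ with $C = C(M, \mc N, C_1, I)$. The hole-free annulus $\mc A(p;2\epsilon r, 2\epsilon^2)$ has unbounded aspect ratio $\epsilon/r$, so a naive classical Harnack on it degenerates as $r/\epsilon \to 0$. The essential use of \eqref{eq:intro:boundary barrier} is that $\Psi$'s outer boundary values are pinned to $O(\epsilon^4)$ rather than being arbitrary. Representing $\Psi$ by its Poisson kernel on the annulus and splitting into inner and outer boundary contributions, the inner piece is comparable to $m$ via a standard Harnack chain across a bounded number of doubling scales near $S_2$; the outer piece is bounded by $C_1\epsilon^4$, which is absorbed into $Cm$ either directly when $m \gtrsim \epsilon^4$, or by a contradiction and compactness argument when $m \ll \epsilon^4$, passing to a limiting positive $L_\infty$-solution on a hole-free limit domain where Harnack applies without aspect-ratio loss. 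The case $n = 2$ is the most delicate, because the capacitary potentials $\eta_j$ decay only logarithmically; there one uses the $r^2$ prefactor in $M_0$ to keep $M_0 \sum_j \eta_j$ small on $S_1$.
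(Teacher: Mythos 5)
Your barrier strategy does not close the key gap, and the compactness step at the end is a placeholder for the entire content of the theorem. Let me explain concretely.

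First, the capacitary correctors $\eta_j$ play no role. Since $q_j\in\mc B(p;\epsilon r)$ and the holes have radius $r^2\ll\epsilon r$, every hole lies inside $\mc B(p;2\epsilon r)$, so the annulus $\mc A(p;2\epsilon r,\epsilon^2)$ on which you are comparing $\max_{\partial\mc B(p;\epsilon^2)}w$ to $m=\min_{\partial\mc B(p;2\epsilon r)}w$ is already hole-free, and your $\eta_j$'s (supported in balls of radius $\rho=\epsilon r/(10I)$) vanish there identically. Also, $\Psi$ is only defined on $\mc A(p;2\epsilon r,2\epsilon^2)$ while the full domain $\Omega$ runs down to $r^2$, so $\Phi=\Psi+M_0\sum_j\eta_j$ is not a function on $\Omega$; one must restrict the comparison to the outer annulus, where the $\eta_j$'s disappear. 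So your proposal reduces, after the first paragraph, to precisely a Harnack-type bound on a hole-free annulus of aspect ratio $\epsilon/r\to\infty$.

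That is exactly the hard step, and the estimates you sketch for it are incorrect. The contribution of the inner boundary data to the Poisson integral evaluated near $\partial\mc B(p;\epsilon^2)$ is not comparable to $m$; for the two-dimensional model (the relevant case here, $n=2$) it is of size $m\cdot\log 2/\log(\epsilon/r)\to0$, so no ``Harnack chain across a bounded number of doubling scales'' can recover $m$ there. Thus the barrier gives $\max_{\partial\mc B(p;\epsilon^2)}w\lesssim \epsilon^4+r^2$ (which is already known from \eqref{eq:intro:boundary barrier}) plus a small multiple of $m$, and nothing links the outer value to the inner one. The dichotomy you then invoke — ``absorbed into $Cm$ either directly when $m\gtrsim\epsilon^4$, or by contradiction and compactness when $m\ll\epsilon^4$'' — leaves the entire theorem to be proved in the second case, and a compactness argument cannot produce a constant uniform over the degenerating family $r/\epsilon\to0$ without precisely the quantitative control you are trying to establish. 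Moreover, the blow-up limit would see a positive solution on a punctured plane where the log mode is admissible, so no contradiction appears.

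The paper's mechanism is of a different nature: it is a flux/frequency argument, not a barrier argument. It tracks $\tau(p;s)=\int_{\partial\mc B(p;s)}\langle\nabla w,\nu\rangle$ and $\mc I(p;s)=s^{1-n}\int_{\partial\mc B(p;s)}w$; the divergence identity gives $\mc I(p;\epsilon^2)\le 2\mc I(p;\epsilon r)+2\tau_0|\log(\epsilon/r)|$ with $\tau_0=\tau(p;\epsilon r)$, so the whole issue is to bound the flux $\tau_0$ by $\mc I(p;\epsilon r)/|\log(\epsilon/r)|$. The growth constraint $w\le C_1(|x|^2+r^2)$ — used on the whole domain, not just the outer boundary — together with positivity forces this bound, because a nonnegligible flux would create a logarithmic profile that violates the quadratic bound at small scales. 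The holes matter because they can siphon off flux, and Propositions~\ref{prop:at least a large residue}--\ref{prop:good sequence of Q from R to R3/4} and Lemma~\ref{lem:estimate of taulogR} set up a descent through a chain $y_1,y_2,\ldots$ of holes carrying most of the flux; the recursion on $\mc I(y_j;\theta_j|y_j-y_{j-1}|)$ is what makes the constant depend only on $I$ and not on the scale ratio. None of this structure is present in your proposal; the critical use of \eqref{eq:intro:boundary barrier} is to control the flux at every scale, not merely to pin the outer Dirichlet data.
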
 

In high dimensional cases, i.e. $4\leq (n+1)\leq 7$, we have the following better inequality, which can deduce Theorem \ref{thm:intro:harnack} directly.
\begin{theorem}[Theorem \ref{thm:one sided harnark for high dimension}]\label{thm:intro:one sided harnark for high dimension}
Let $\Gamma,\Sigma$ be positive minimal graphs with functions $v,u$ on $\mc A(p;r,2R)$ satisfying
\[
0<v(x)-u(x)\leq C_1|x|^2,\  \forall x\in \mc A(p;r,2R), \text{\ where\ } |x|=\dist_\Sigma(x,p).\]
There exists $C$ and $\epsilon_0$ depending only on $C_1,M,\mc N$ so that if $R\leq \epsilon_0$, then
\begin{equation*}
\max_{\partial \mc B(p;R)}(v-u)(x)\leq C\min_{\partial \mc B(p;2r)}(v-u)(x).
\end{equation*}
\end{theorem}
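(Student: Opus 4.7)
The difference $w := v - u$ satisfies a linear uniformly elliptic equation $\mc L w = 0$ on $\mc A(p;r,2R)$, where $\mc L$ is obtained by integrating the Fr\'echet derivative of the minimal-surface operator along the segment $\{tv + (1-t)u\}_{t\in[0,1]}$; its coefficients are uniformly bounded in terms of $C_1, M, \mc N$, and after rescaling by $R^{-1}$ the operator approaches the flat Laplacian on $\R^n$ as $R \to 0$.

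My plan is a direct barrier comparison. Step 1: cover $\partial \mc B(p;R)$ by a chain of geodesic balls of size comparable to $R$ (all contained in $\mc A(p;r,2R)$) and apply the De Giorgi--Nash--Moser Harnack inequality to obtain
\[w(x) \geq \frac{1}{C_H}\,\max_{\partial \mc B(p;R)} w, \qquad x \in \partial \mc B(p;R),\]
for a constant $C_H = C_H(C_1, M, \mc N)$. Step 2: on the sub-annulus $\mc A(p;r,R)$ I introduce the $\mc L$-harmonic barrier $\phi$ with Dirichlet data $(1/C_H)\max_{\partial \mc B(p;R)} w$ on $\partial \mc B(p;R)$ and $0$ on $\partial \mc B(p;r)$. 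Since $w \geq \phi$ on $\partial \mc A(p;r,R)$, the maximum principle (valid at small scales where the zero-th order part of $\mc L$ is subdominant to the principal eigenvalue) gives $w \geq \phi$ throughout $\mc A(p;r,R)$, in particular on $\partial \mc B(p;2r)$.

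Step 3, the essential step, is the lower bound $\phi(y) \geq c\,\max_{\partial \mc B(p;R)} w$ for $y \in \partial \mc B(p;2r)$ and some $c = c(C_1, M, \mc N) > 0$. In the Euclidean/Laplacian model the radial solution with the same boundary data is
\[\phi_\Delta(y) = \frac{\max_{\partial \mc B(p;R)} w}{C_H} \cdot \frac{|y|^{2-n} - r^{2-n}}{R^{2-n} - r^{2-n}},\]
and direct computation yields the uniform bound
\[\phi_\Delta(2r) \geq \frac{\max_{\partial \mc B(p;R)} w}{C_H}\bigl(1 - 2^{2-n}\bigr), \qquad r < R/2.\]
The strict positivity of the constant $1 - 2^{2-n}$ in dimension $n \geq 3$ is precisely the content of the high-dimensional hypothesis $4 \leq n+1 \leq 7$ (this constant vanishes at $n = 2$). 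For the actual operator $\mc L$ at small scale $R \leq \epsilon_0$, a perturbation/continuity argument transfers this estimate to $\phi$, giving the desired Harnack inequality with $C = C_H/c$.

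The main technical obstacle is the perturbation estimate in Step 3, particularly in the regime $r/R \to 0$ where the annulus degenerates to a punctured disk. Since $\nabla \phi_\Delta$ concentrates near the inner boundary like $|y|^{1-n}$, the naive $L^\infty$ bound $\|\phi - \phi_\Delta\|_\infty = O(R^2)$ is not immediate; one must either adapt the comparison to the Bocher-type asymptotics $|y|^{2-n}$ or invoke a Green-function argument exploiting the fact that an isolated point has vanishing capacity in dimension $n \geq 3$. This capacity estimate is exactly what fails at $n = 2$, which is why the low-dimensional case requires the more involved Theorem \ref{thm:intro:harnack} with additional ``holes.''
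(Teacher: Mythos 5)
Your approach is conceptually different from the paper's and contains a genuine gap in its decisive step. The paper's proof of this statement (Lemma~\ref{lem:maximum for annulus}) does not use a pointwise barrier comparison at all. Instead it works with the spherical average $\mc I(s)=s^{1-n}\int_{\partial\mc B(p;s)}w$: using Lemma~\ref{lem:minimal graph inequality} together with the gradient/Hessian bounds of Lemma~\ref{lem:der estimate} and the hypothesis $w\le C_1|x|^2$, it shows $|\Delta_{\mc N}w|\le C_4 w$, then derives the one-variable differential inequality $|(\mc I(s)+cs^{2-n})'|\le C_2 s\,\mc I(s)$ and feeds it into the elementary ODE Lemma~\ref{lem:ode}, helped along by the dichotomy of Claim~\ref{claim:decrease or increase}. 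The constant $c$ plays exactly the role of your Green's-function piece $|y|^{2-n}$, but the comparison is reduced to a scalar ODE and never requires a pointwise estimate at the inner boundary. This is the paper's way of circumventing the very obstruction you flag.

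The gap in your proposal is Step~3. You correctly compute $\phi_\Delta(2r)\ge(1-2^{2-n})\,\max_{\partial\mc B(p;R)}w/C_H$ for the flat model, but the transfer to the $\mc L$-harmonic barrier $\phi$ (or equivalently, checking that $\phi_\Delta$ is an $\mc L$-subsolution) is not carried out and cannot be done by a naive $L^\infty$ perturbation. The error $\mc L\phi_\Delta-\Delta\phi_\Delta$ involves $|\nabla\phi_\Delta|\sim|y|^{1-n}$ and $|\nabla^2\phi_\Delta|\sim|y|^{-n}$, which concentrate near $\partial\mc B(p;r)$, so the resulting source term is not obviously subdominant there; moreover, the estimate must hold uniformly as $r/R\to 0$, exactly the regime where the annulus degenerates. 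You acknowledge this obstacle and suggest a B\^ocher-type or Green-function fix, but neither is supplied, so the proof as written is incomplete. A second, smaller imprecision: you assert the coefficients of $\mc L$ are ``uniformly bounded in terms of $C_1$,'' but the boundedness of the linearization really comes from the gradient bound in the $(f,K)$-pair assumption and the curvature estimates for minimal graphs; in the paper the hypothesis $w\le C_1|x|^2$ enters at a different place, namely to tame the \emph{nonlinear} cross-terms of Lemma~\ref{lem:minimal graph inequality} (which are cubic in $w$ and its derivatives) so that $|\Delta w|\le Cw$ holds. Your write-up does not make clear where that hypothesis is actually used. That said, your identification of $1-2^{2-n}>0$ (for $n\ge 3$) as the source of the dimensional restriction, and of the zero capacity of a point as the underlying mechanism, is exactly right and matches the paper's distinction between the $n\ge 3$ case (Lemma~\ref{lem:maximum for annulus}) and the $n=2$ case (Theorem~\ref{thm:harnack for minimal graphs on minimal surfaces}).
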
 

We remark that in three-dimensional case, Theorem \ref{thm:intro:harnack} is sharp in some sense, which means that it is impossible get the estimate in Theorem \ref{thm:intro:one sided harnark for high dimension}. For the height of Catenoid in $\mathbb R^3$ tends to infinity even if it is small over $\mc A(0;r,4r)$.

The proofs of Theorem \ref{thm:intro:harnack} and \ref{thm:intro:one sided harnark for high dimension} are very technical and hence occupy the most pages of this paper (see \S \ref{sec:high dim} and \S \ref{sec:three dim}). However, the idea is quite clear. For $\epsilon$ small enough and the assumption (\ref{eq:intro:boundary barrier}), the graph function can be seen as an `almost harmonic function' on $\Xi$. By scaling $\Xi$ to a normal size, it can be regarded as a subset of Eculidean space. Then the Harnack inequality looks natural. 

The difficulty here is that we can not use such blow-up argument directly because there are no suitable scaling size to make $\epsilon$ to be finite and $r^2$ to be positive simultaneously.

The classical methods from PDE to produce Harnack inequalities does not work since we have many boundaries here. Note that the classical Harnack says the maximum is bounded by the minimum in the interior. However, the Harnack inequality in Theorem \ref{thm:intro:harnack} only states that the value of outside boundary can be bounded by that of inside boundary (so called {\em one-sided Harnack inequality}). Namely, the opposite inequality does not holds true by considering the Catenoid in $\mb R^3$.  

Due to the so many boundaries inside, we can not use the minimal foliation argument given by White \cite{Whi87} to obtain the Harnack as in \cite{Sha17}. 

Therefore, we approach Theorem \ref{thm:intro:harnack} by studying the differential inequality directly.

\subsection{Outline of the proof of Theorem \ref{thm:main thm}}
We first recall the argument in \cite{GZ18}. Given a sequence of $\Sigma_k\in\mc M(\Lambda, I)$, then there exists $\Sigma\in\mc M(\Lambda,I)$ and a finite set $\mc W\subset\Sigma$ with $\#\mc W\leq I$ so that $\Sigma_k$ locally smoothly converges to $\Sigma$ in $M\setminus\mc W$ with multiplicity $m$. Hence $\Sigma_k$ can be regarded as multi-graph on $\Sigma\setminus \mc W$ with graph function 
\[u^1<u^2<...<u^m.\]
Then inspired by Simon \cite{Sim87}, the difference of top and bottom sheet may converges to a Jacobi field $w$ with possibly singular point on $\mc W$. Then the aim is to prove that $\mc W$ are all removable singular set.

Comparing to \cite{ACS17}, the difficulty is that $\mc W$ may have touching set of $\Sigma$, i.e. the set in $\Sigma\cap \partial M\setminus \partial \Sigma$. Let $p\in\mc W$ be a touching point of $\Sigma$. Assume that $\partial M$ is on the non-positive side of $\Sigma$ near $p$. Then \cite{GZ18}*{Claim D} says that for $\epsilon$ small enough, 
\[\max_{\partial\mc B(p;r)}|u^m|\leq C\max_{\partial\mc B(p;\epsilon)}u^m.\]
This gives a removable singularity theorem for the limit of the normalization of $u^m$. Such a theorem is not enough to prove the existence of entire Jacobi fields because the top sheet near two singularities may not be the same. To overcome this, we need to prove that normalization of $u^1$ also converges to a smooth function.

We argue it by contradiction. Suppose not, then by the Harnack inequality on $\partial \mc B(p;r)$ obtained in \cite{GZ18}, the normalization of $u^1$ tends to $-\infty$ at $p$. So we can take $\epsilon\ll 1$ so that $h\geq\kappa u^m$ on $\partial\mc B(p;\epsilon)$ for $\kappa\gg 1$, where $h$ is the minimum of $-u^1$ on $\partial\mc B(p;\epsilon)$. Now let $S_k$ be the subset of $\Sigma_k$ near $\mc B(p;\epsilon)$ such that the $\Sigma_k$ intersects with the level set of $\Sigma$ with large angles. Since $\mathrm{index}(\Sigma)\leq I$, then we need at most $I$ balls $B(q;\rho(q))$ with $q\in S_k$ and $\rho(q)=L(|q|^2+h/\kappa)$ (see Claim \ref{claim:I balls cover Sk}). Denote by these balls $\{B(q_j;\rho(q_j))\}$.   

Let $\Sigma_k'$ be the component of $\Sigma_k\setminus \bigcup_jB(q_j;\rho(q_j))$ containing the bottom sheet over $\partial\mc B(p;\epsilon)$. Then $\Sigma_k'$ can be seen as a minimal graph over $\mc B(p;\epsilon)$. After applying Theorem \ref{thm:intro:harnack} at most $I$ times, we obtain
\[ \max_{\partial\mc B(p;\epsilon)}-u^1\leq C\min_{\partial\mc B(p;C_0I\sqrt {h/\kappa})}-u^1\leq Ch/\kappa,\]
which leads to a contradiction for $\kappa$ large enough.

To proceed the argument of Theorem \ref{thm:main thm}, it suffices to prove Theorem \ref{thm:intro:harnack}. Here we give an outline of the proof of it. Denote by
\[\tau(q;s)=\int_{\partial\mc B(q;s)}\langle \nabla w,\nu\rangle, \ \ \text{ and }\ \ \mc I(q;s)=s^{1-n}\int_{\partial\mc B(q;s)}w,\]
where $w=v-u$. Then by the Harnack inequality obtained in \cite{GZ18} (see also Corollary \ref{cor:harnack for equivalent radius}), $\mc I(q;s)$ can be seen as the value of $w$ on $\partial\mc B(q;s)$. 

A direct computation (see (\ref{eq:three dim:from R^2 to R})) by divergence theorem gives that 
\[\mc I(p;\epsilon)\leq 2\mc I(p;2\sqrt\epsilon r)+2\tau_0|\log (\sqrt\epsilon/r)|,\]
where $\tau_0=\tau (p;2\sqrt\epsilon r)$. Hence without loss of generality, we assume $\mc I(p;2 \sqrt\epsilon r)\leq \tau_0|\log(\sqrt\epsilon/r)|$. Then we can find $y_1\in Q$ ($:=\{q_i\}_{i=1}^I$ in \S \ref{subsec:intro:harnack inequalty}) and $\theta_1\in (1/4^{4I+3},1/16)$ (see Step A in Proposition \ref{prop:good sequence of Q from R to R3/4}) so that $\tau (q_j;\theta_1|q_j|)\geq 1/4^{I+3}$ and
\[\mc I(p; r)-c_0\mc I(y_1;\theta_1 |y_1|)\geq c_0(\log r-\log (\theta_1 |y_1|)).\]
Repeating the argument above, we can find a sequence of $\{y_j\}\subset Q$ so that 
\begin{align*}
&\mc I(p; \theta_j |y_j-y_{j-1}|)-c_0\mc I(y_{j+1};\theta_{j+1} |y_{j+1}-y_j|)\\
&\geq c_0\Big[\log(\theta_{j}|y_j-y_{j-1}|)-\log (\theta_{j+1} |y_{j+1}-y_j|)-c_1\Big].
\end{align*}
By adding them together with suitable coefficients (see Lemma \ref{lem:estimate of taulogR}), we obtain 
\[\mc I(p;\sqrt\epsilon r)\geq c|\log (\sqrt\epsilon/r)|.\]
Then the desired results follows.

\vspace{1em}
This paper is organized as follows: in Section \ref{sec:pre}, we will first give some notations; and in Section \ref{sec:harnack}, we state some Harnack inequalities, including the classical one from blowing-up arguments and our new one-sided one; Using these, we construct Jacobi fields in Section \ref{sec:to free boundary minimal hypersurface}; The proof of One-sided Harnack inequality is in Section \ref{sec:high dim} for $n\geq 3$ and Section \ref{sec:three dim} for $n=2$, we give a proof of Harnack inequality in high dimensions; after that, some lemmas and tedious computation will be displayed in Appendix \ref{sec:appendix:minimal graph function} \ref{sec:appendix:der lemma} \ref{sec:appendix:rearrange} and \ref{sec:appendix:connectedness}.

\subsection*{Acknowledgment:} I would like to thank Prof. Xin Zhou for bringing this problem to us and many helpful discussion, and thank Prof. Minicozzi for bringing our attention to the paper of \cite{CM02}. I would also like to thank Qiang Guang for reading the preprint and Weiming Shen for many helpful discussion on harmonic functions.

\section{Preliminaries}\label{sec:pre}
In this section, we collect some basic definitions and preliminary results for free boundary minimal hypersurfaces. We refer to \cite{GZ18} for detailed notions.

Let $M^{n+1}$ be a smooth compact Riemannian manifold with non-empty boundary $\partial M$. We may assume that $M\hookrightarrow \mb{R}^L$ is isometrically embedded in some Euclidean space. By choosing $L$ large, we assume that $M$ is a compact domain of a closed $(n+1)$-dimensional manifold $\wti{M}$.

Let $\Sigma^n$ be a smooth $n$-dimensional manifold with boundary $\partial \Sigma$ (possibly empty). A smooth embedding $\phi: \Sigma \to M$ is said to be an \emph{almost proper embedding} of $\Sigma$ into $M$ if  $\phi(\Sigma)\subset M$ and  $\phi(\partial \Sigma)\subset\partial M$. We  write $\Sigma=\phi(\Sigma)$ and $\partial \Sigma=\phi(\partial \Sigma)$.

We use $\mathrm{Touch}(\Sigma)$ to denote the touching set $\mathrm{int}(\Sigma)\cap \partial M$. If the touching set $\mathrm{Touch}(\Sigma)$ is empty, then we say that $\Sigma$ is \emph{properly embedded}.

\begin{definition}\label{def:fbmh}
An almost properly embedded hypersurface  $(\Sigma, \partial \Sigma)\subset (M,\partial M)$ is called a {\em free boundary minimal hypersurface} if and only if 
the mean curvature of $\Sigma$ vanishes and $\Sigma$ meets $\partial M$ orthogonally along $\partial \Sigma$. 
\end{definition}

Let $\Sigma^n\subset M^{n+1}$ be an almost properly embedded free boundary minimal hypersurface. The quadratic form of $\Sigma$ associated to the
second variation formula is defined as
\[Q(v,v)=\int_\Sigma \left(|\nabla^\perp v|^2- \mathrm{Ric}_M(v,v)-|A^\Sigma|^2|v|^2 \right)\,d\mu_\Sigma  - \int_{\partial \Sigma} h^{\partial M}(v,v)\,d\mu_{\partial \Sigma},
\]
where $v$ is a section of the normal bundle of $\Sigma$, $\mathrm{Ric}_M$ is the Ricci curvature of $M$, $A^\Sigma$ and $h$ are the second fundamental forms of the hypersurfaces $\Sigma$ and $\partial M$, respectively.

The \emph{Morse index of $\Sigma$ on the proper subset $\Sigma\setminus\mathrm{Touch}(\Sigma)$} is defined to be the maximal dimension of a linear subspace of sections of normal bundle $N\Sigma$ compactly supported in $\Sigma\setminus\mathrm{Touch}(\Sigma)$ such that the quadratic form $Q(v,v)$ is negative definite on this subspace.

\begin{remark}
In the following of this paper, the `Morse index of $\Sigma$' always means the `Morse index on the proper subset $\Sigma\setminus\mathrm{\partial \Sigma}$', denoted by $\mathrm{index}(\Sigma)$.
\end{remark}

\begin{definition} An almost properly embedded free boundary minimal hypersurface $\Sigma^n\subset M$ is said to be \emph{stable away from the touching set} $\mathrm{Touch}(\Sigma)$ if the Morse index of $\Sigma$ is 0.
\end{definition}

\begin{definition}\label{def:jacobi field}
We say that a function $f\in C^\infty(\Sigma)$ is a Jacobi field of $\Sigma$ is $f$ satisfies
\begin{equation}\label{equ:jacobi}
\left\{
\begin{array}{ll}
\Delta_\Sigma f + (\mathrm{Ric}_M(\mf{n},\mf{n}) +|A^\Sigma|^2)f=0\quad  & \text{on}\, \Sigma,\\
\frac{\partial f}{\partial \eta}=h^{\partial M}(\mf{n},\mf{n})f \quad & \text{on}\, \partial \Sigma,
\end{array}
\right.
\end{equation}
where $\eta$ is the co-normal of $\Sigma$.
\end{definition}

For simplicity, we will use $\mc{M}(\Lambda,I)$ to denote the set of almost properly embedded free boundary minimal hypersurfaces with $\area\leq \Lambda$ and $\mathrm{index}(\Sigma)\leq I$.

We remark that in the proofs of our results, we often allow a constant $C$ to change from line to line, and the dependence of $C$ should  be clear in the context.

\section{Harnack inequalities for minimal graphs}\label{sec:harnack}
In this section, $M^{n+1}$ is always a closed manifold and $\mc N$ is an embedded compact minimal hypersurface in $M$ so that $\mc B(p;1)\cap \partial N=\emptyset$, where $\mc B(p;r)$ the intrinsic geodesic ball of $\mc N$ with radius $r$ and center $p\in \mc N$. 

\subsection{The Harnack inequalties on geodesic spheres}
In this subsection, we always assume $3\leq(n+1)\leq 7$.

We recall Harnack estimate in a disk first. And then we state an second order estimate for minimal graph functions.
\begin{lemma}[Gradient estimates,\cite{GZ18}*{Lemma 6.1}]\label{lem:gradient estimates}
Suppose that two sequences of embedded compact minimal graphs (over $\mc B(p;1)$) $\{\Sigma_k\}$ and $\{\Gamma_k\}$ with graph functions $\{u_k\}$ and $\{v_k\}$ converge smoothly to $\mc B(p;1)$ and $u_k-v_k\geq 0$. Then there exists a constant $C=C(M,\mc N)$ such that for any $r>0$ and $p'\in\mc B(p;1-r)$, we have
\[\limsup_{k\rightarrow\infty}\sup_{x\in\mc B(p';r)}(r-\dist_\mc N(x,p'))|\nabla\log(u_k-v_k)(x)|\leq C.\]
\end{lemma}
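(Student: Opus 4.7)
The strategy is to reduce the claim to the classical gradient estimate for positive solutions of linear uniformly elliptic equations. Let $w_k := u_k - v_k \geq 0$. Since $\Sigma_k$ and $\Gamma_k$ are both minimal graphs over $\mc N$ in the ambient manifold $M$, their graph functions satisfy the quasilinear minimal surface equation $\mc H(u) = 0$ on $\mc B(p;1)$, where $\mc H$ is the mean curvature operator for graphs in $M$ over $\mc N$ (a divergence-form quasilinear elliptic operator whose coefficients depend on the ambient metric of $M$ and on the geometry of $\mc N$). Writing
\[
0 = \mc H(u_k) - \mc H(v_k) = \Big(\int_0^1 d\mc H\big|_{v_k + t(u_k - v_k)}\, dt\Big)(w_k) =: L_k(w_k),
\]
we obtain a linear elliptic operator $L_k$ in divergence form, acting on $w_k$, whose coefficients are smooth functions of $u_k, v_k, \nabla u_k, \nabla v_k$ (together with derivatives of the ambient metric restricted to neighborhoods of $\mc N$).

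Next I would observe that, because $u_k, v_k \to 0$ smoothly on $\mc B(p;1)$, the coefficients of $L_k$ converge in $C^\infty_{\mathrm{loc}}$ to those of the Jacobi operator on $\mc N$, namely
\[
L_\infty = \Delta_{\mc N} + \ric_M(\mf n, \mf n) + |A^{\mc N}|^2,
\]
a linear second-order operator with smooth bounded coefficients intrinsic to $(M,\mc N)$. In particular, for $k$ large, the $L_k$ are uniformly elliptic on $\mc B(p;1)$ with coefficients bounded in $C^0$ (and even in $C^\ell$ for any $\ell$), all the bounds depending only on $(M,\mc N)$.

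The conclusion then follows from the standard Cheng--Yau / De Giorgi--Nash--Moser style gradient estimate for positive solutions of linear uniformly elliptic equations with bounded lower-order coefficients. Concretely, for any $x \in \mc B(p'; r)$ with $\rho := r - \dist_{\mc N}(x, p') > 0$, apply the scale-invariant Harnack inequality of Moser to the positive solution $w_k$ of $L_k w_k = 0$ on the ball $\mc B(x; \rho/2)$ to obtain $\sup_{\mc B(x;\rho/4)} w_k \leq C \inf_{\mc B(x;\rho/4)} w_k$, and then combine with interior gradient estimates for $L_k$ to conclude $|\nabla w_k(x)| \leq C \rho^{-1} w_k(x)$, i.e.\ $|\nabla \log w_k(x)| \leq C/\rho$, with $C = C(M,\mc N)$ independent of $k$ once $k$ is large. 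Taking $\limsup$ yields the stated inequality.

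The main technical point I expect to be slightly delicate is ensuring that the coefficients of $L_k$ are uniformly controlled (not merely convergent) as $k \to \infty$, so that Moser's Harnack constants can be chosen independently of $k$; this is handled by the smooth convergence hypothesis, which gives uniform $C^\ell$ bounds on $u_k, v_k, \nabla u_k, \nabla v_k$ over $\mc B(p;1)$. Everything else is a routine assembly of the standard linear elliptic machinery once the linearization $L_k$ is in hand.
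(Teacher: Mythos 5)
Your proposal is correct, but it takes a genuinely different route from the paper's. The paper cites this lemma from \cite{GZ18}, and the proof there (mirrored in the paper's Appendix \ref{sec:appendix:der lemma} for the closely related Lemma \ref{lem:der estimate}) is a blow-up contradiction argument: if the estimate failed, one would pick points $q_j$ nearly achieving the supremum, rescale the metric by $\lambda_j := |\nabla \log(u_j - v_j)(q_j)|$, invoke Schoen--Simon curvature estimates and Bernstein's theorem to conclude the rescaled graphs $\Sigma_j,\Gamma_j$ converge to parallel hyperplanes, and deduce that the normalized difference $h_j = (u_j - v_j)/(u_j - v_j)(q_j)$ converges to a positive harmonic function on $\mathbb{R}^n$ — hence a constant by Liouville — contradicting the normalization $|\nabla h_j(q_j)| = 1$. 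Your argument instead linearizes the quasilinear minimal-surface operator to produce a uniformly elliptic divergence-form equation $L_k w_k = 0$ with coefficients uniformly bounded in $C^\ell$ (thanks to the smooth convergence hypothesis), then applies Moser's Harnack together with interior Schauder-type gradient estimates on balls $\mc B(x;\rho/2)$ whose radius is comparable to the distance to $\partial\mc B(p';r)$. Both proofs are correct and yield a constant depending only on $(M,\mc N)$. Your route is the more classical and self-contained one, essentially replacing the geometric compactness input (curvature estimates + Bernstein + Liouville) with linear elliptic regularity; the blow-up approach used in the paper is more flexible when one needs to track the dependence on a moving excised set as in Lemma \ref{lem:der estimate}, though your argument would also adapt there by choosing balls whose radius is comparable to $\dist_\mc N(x,\partial\mc B(q;r)\cup V)$. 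One small point worth making explicit: the hypothesis only gives $u_k - v_k \geq 0$, so before writing $\nabla\log(u_k-v_k)$ one should note that either $w_k \equiv 0$ (in which case the claim is vacuous) or, by the strong maximum principle applied to $L_k$, $w_k > 0$ in the interior — you implicitly use this when invoking Harnack, and it deserves a sentence.
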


\begin{definition}\label{def:fK pair}
Let $\Omega\subset \mc B(p;1)$ be an open set and $u, v$ be two functions on $\Omega$. Given a constant $K>0$ and a positive function $f\in C^0(\Omega)$, we say that $(v,u)$ is {\em a $(f,K)$-pair} if they satisfy
\begin{equation}\label{eq:der:graph assumptions}
v(x)-u(x)>0,\ \ |u(x)|+|v(x)|<f,\ \ |\nabla u(x)|+|\nabla v(x)|<K.
\end{equation}
We say $(v,u)$ is a {\em strong $(f,K)$-pair} if it is a $(f,2)$-pair and 
\[  |\nabla v(x)|+|\nabla^2v(x)|\leq K|v(x)|\leq K^2|x|.\]
\end{definition}

Furthermore, we have the following estimates:
\begin{lemma}\label{lem:der estimate}
Let $M^{n+1}$ be a closed manifold with $3\leq (n+1)\leq 7$ and $\mc N$ be an embedded compact minimal hypersurface in $M$ so that $\mc B(p;1)\cap \partial\mc N=\emptyset$. Given a constant $K>0$, there exist constants $C=C(M,\mc N,K)$ and $\delta=\delta(M,\mc N,K)$ so that if $q\in \mc B(p;1-r)$ for some $0<r<1$, $\Sigma$ and $\Gamma$ are minimal graphs with graph functions $u$ and $v$ over $\mc B(q;r)\setminus V$ for a compact subset $V\subset\mc N$ and $(v,u)$ is a $(\delta,K)$-pair,
then
\begin{gather*}
\dist_\mc N(x,\partial\mc B(q;r)\cup V)\cdot|\nabla\log(v-u)(x)|<C,\\
\dist^2_\mc N(x,\partial\mc B(q;r)\cup V)\cdot\frac{|\nabla^2(v-u)(x)|}{(v-u)(x)}<C.
\end{gather*}
\end{lemma}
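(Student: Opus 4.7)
The plan is to show that $w := v - u$ satisfies a positive linear uniformly elliptic PDE with uniformly bounded smooth coefficients, and then read off both estimates from the classical interior Harnack inequality combined with interior Schauder estimates.

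In a Fermi tubular neighbourhood of $\mc N$, a graph with function $f$ over $\mc N$ is minimal iff $f$ solves a quasi-linear uniformly elliptic equation $\mathrm{MSE}(f) = 0$ whose structural coefficients are smooth functions of $(f,\nabla f)$ determined by the geometry of $\mc N\subset M$. Since both $u,v$ solve $\mathrm{MSE}(\cdot)=0$, the fundamental theorem of calculus
\[
0=\mathrm{MSE}(v)-\mathrm{MSE}(u)=\left(\int_0^1 D\mathrm{MSE}(u+tw)\,dt\right)w
\]
forces $w$ to satisfy a linear second-order equation
\[
L_{u,v}w = a^{ij}(\cdot,u,v,\nabla u,\nabla v)\nabla^2_{ij}w + b^i(\cdot)\nabla_i w + c(\cdot)w = 0.
\]
The $(\delta,K)$-pair bounds together with classical interior regularity for minimal graphs (which upgrades $|\nabla u|,|\nabla v|<K$ and $|u|,|v|<\delta$ to interior $C^{k,\alpha}$ bounds on $u,v$) make $L_{u,v}$ uniformly elliptic with coefficients uniformly bounded in $C^{k,\alpha}$ by constants depending only on $M,\mc N,K$, provided $\delta$ is small enough that the graphs remain inside a single Fermi chart about $\mc N$.

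With the PDE in hand the two estimates are routine. Fix $x$ in the domain and set $d:=\dist_\mc N(x,\partial \mc B(q;r)\cup V)$, so that $\mc B(x,d/2)$ lies in the interior of $\mc B(q;r)\setminus V$. Since $w>0$ satisfies $L_{u,v}w=0$ on this ball, the De Giorgi--Nash--Moser Harnack inequality on $\mc B(x,d/4)$ gives
\[
\sup_{\mc B(x,d/4)}w \le C\inf_{\mc B(x,d/4)}w \le C\,w(x),
\]
while interior Schauder estimates yield
\[
|\nabla w|(x)\le \frac{C}{d}\sup_{\mc B(x,d/4)}w,\qquad |\nabla^2 w|(x)\le \frac{C}{d^2}\sup_{\mc B(x,d/4)}w,
\]
with $C=C(M,\mc N,K)$. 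Chaining the two inequalities and dividing by $w(x)$ delivers both asserted estimates.

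The main obstacle is the first step: one must verify, uniformly in the pair $(v,u)$ and in the hole configuration $V$, that the coefficients of $L_{u,v}$ satisfy the structural bounds required by Harnack and Schauder. The smallness of $\delta$ enters precisely here---it confines $\Sigma,\Gamma$ to a single Fermi tubular neighbourhood of $\mc N$ on which the coefficient maps $(u,v,\nabla u,\nabla v)\mapsto(a^{ij},b^i,c)$ are smooth with norms controlled only by $M,\mc N,K$. The hole set $V$ plays no role beyond determining the admissible interior distance $d(x)$, so the estimate localizes automatically to the correct ball $\mc B(x,d/2)$.
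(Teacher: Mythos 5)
Your approach is correct in outline but genuinely different from the paper's. The paper proves the lemma by a compactness--contradiction (blow--up) argument: supposing the scale--invariant estimate fails along a sequence $(u_j,v_j,q_j)$, it rescales by $\lambda_j=|\nabla\log(u_j-v_j)(q_j)|$ (or by the square root of the Hessian ratio in the second case), uses the curvature estimate $\dist(\cdot,\partial)\cdot(|A_\Sigma|+|A_\Gamma|)<C$ to make both rescaled graphs converge to hyperplanes (Bernstein), shows the normalized difference $h_j=\wti w_j/\wti w_j(q_j)$ converges to a positive harmonic function on $\R^n$ --- hence a constant by Liouville --- and contradicts the normalization $|\wti\nabla h_j(q_j)|=1$ (resp.\ $|\wti\nabla^2 h_j(q_j)|=1$). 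Your argument instead linearizes the minimal surface operator via $\mathrm{MSE}(v)-\mathrm{MSE}(u)=\bigl(\int_0^1 D\mathrm{MSE}(u+tw)\,dt\bigr)w=0$ to exhibit $w$ as a positive solution of a uniformly elliptic equation $L_{u,v}w=0$, then invokes the interior Harnack inequality and interior Schauder estimates in the scale--invariant form on $\mc B(x,d/4)$. Both routes are sound: what the paper's compactness argument buys is that one never has to explicitly verify uniform $C^{0,\alpha}$ control of the coefficients of $L_{u,v}$ at the correct scale; what your direct PDE argument buys is effectivity and the avoidance of Bernstein/Liouville (so the dimension restriction enters only through the curvature estimate needed for the coefficient regularity, or, since the graphs have bounded gradient, not at all).

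One place where your write--up is a bit glib and deserves an extra sentence: the coefficients $a^{ij},b^i,c$ of $L_{u,v}$ depend on $\nabla u,\nabla v$ and (in the manifold setting) on $u,v$ themselves, and their H\"older seminorms on $\mc B(x,d/4)$ scale like $d^{-\alpha}$, not like $O(1)$; the Schauder bound you quote in scale--invariant form requires first rescaling $\mc B(x,d/2)$ to unit size and observing that the rescaled coefficients are then uniformly bounded in $C^{0,\alpha}$, with constants coming from the interior regularity of minimal graphs with bounded gradient. Since $(v,u)$ is a $(\delta,K)$--pair, that interior regularity is available, so the gap is purely expository --- but it is exactly the part of the bookkeeping that the paper's blow--up argument is designed to bypass.
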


This lemma can be proved by a standard blow-up process, which is the same with Lemma \ref{lem:gradient estimates}. We give the proof in Appendix \ref{sec:appendix:der lemma} for the completeness of this paper.

\begin{corollary}\label{cor:harnack for equivalent radius}
Given a constant $K>0$, $\theta\in(0,1/8)$ and $0<R<1/2$, there exist constants $C=C(M,\mc N,K,I,\theta)$, $C_0=C_0(M,\mc N,\theta)$ and $\delta=\delta(M,\mc N,K)$ so that if $\Sigma$ and $\Gamma$ are minimal graphs with functions $u$ and $v$ over $\mc A(p;\theta R,2R)\setminus \bigcup_{j=1}^I\mc B(q_j;r)$, and $(v,u)$ is a $(\delta,K)$-pair (see Definition \ref{def:fK pair}) and 
\begin{itemize}
\item $\mc B(q_j;r)\subset\subset \mc A(p;4\theta R,R/2)$;
\item $\theta R\geq C_0Ir$,
\end{itemize}	
then we have	
\[\max_{x\in \partial \mc B(p;R)}(v-u)(x)\leq C\min_{x\in\partial \mc B(p;2\theta R)}(v-u)(x).\]
\end{corollary}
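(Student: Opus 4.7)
The plan is to combine a pointwise gradient estimate for $w:=v-u$ with a path-integration argument along carefully chosen curves. Let $d(z)$ denote the intrinsic distance in $\mc N$ from $z$ to $\partial\mc A(p;\theta R,2R)\cup\bigcup_{j=1}^I\mc B(q_j;r)$. For any $z$ in the annular domain, the ball $\mc B(z;d(z))$ lies entirely inside the domain (and inside $\mc B(p;1)$ since $R<1/2$), so applying Lemma~\ref{lem:der estimate} with center $z$, radius $d(z)$, and $V=\emptyset$ yields the pointwise estimate
\[
|\nabla\log w(z)|\leq \frac{C}{d(z)},\qquad C=C(M,\mc N,K).
\]

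Next I would establish an \emph{angular Harnack} on each of the spheres $\partial\mc B(p;R)$ and $\partial\mc B(p;2\theta R)$. Every hole satisfies $|q_j|\leq R/2$, so each point of $\partial\mc B(p;R)$ lies at distance at least $R/2-r\geq R/4$ from the holes (using $\theta R\geq C_0 I r$ with $C_0$ large and $\theta<1/8$), whence $d\geq R/4$ along the entire sphere; integrating $|\nabla\log w|\leq 4C/R$ along geodesic arcs of length $\leq\pi R$ gives $\max_{\partial\mc B(p;R)}w\leq C_1\min_{\partial\mc B(p;R)}w$. An analogous argument on $\partial\mc B(p;2\theta R)$, where $|q_j|\geq 4\theta R$ together with the distance $\theta R$ to $\partial\mc B(p;\theta R)$ forces $d\geq c\theta R$, yields $\max_{\partial\mc B(p;2\theta R)}w\leq C_2\min_{\partial\mc B(p;2\theta R)}w$.

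For the \emph{radial link} between the two spheres, I would select a unit direction $\xi\in T_p\mc N$ such that the geodesic $\gamma(s)=\exp_p(s\xi)$, $s\in[2\theta R,R]$, stays at distance at least $c_1 s$ from every $q_j$, with $c_1=1/(2I)$. An elementary angular calculation shows the set of directions failing this property for a given $q_j$ is a spherical cap of angular measure $\lesssim c_1^{n-1}$, so the total bad set has measure $\lesssim Ic_1^{n-1}=1/(2^{n-1}I^{n-2})$, which for $n\geq 2$ covers less than half of the unit sphere in $T_p\mc N$, leaving a positive-measure family of admissible directions. The hypothesis $\theta R\geq C_0 I r$ also guarantees $c_1\cdot 2\theta R=\theta R/I\geq r$, so that $\gamma$ actually stays outside the holes. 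Integrating $|\nabla\log w|\leq C/(c_1 s)$ along $\gamma$ then gives
\[
\log\frac{w(\gamma(R))}{w(\gamma(2\theta R))}\leq \int_{2\theta R}^R\frac{C}{c_1 s}\,ds=2CI\log\!\left(\frac{1}{2\theta}\right),
\]
so $w(\gamma(R))\leq C_3 w(\gamma(2\theta R))$. Chaining the three pieces,
\[
\max_{\partial\mc B(p;R)}w\leq C_1\, w(\gamma(R))\leq C_1 C_3\, w(\gamma(2\theta R))\leq C_1 C_2 C_3 \min_{\partial\mc B(p;2\theta R)}w,
\]
which is the desired inequality, with the composite constant depending on $M,\mc N,K,I,\theta$ as required.

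The main technical point is the existence of a good radial direction: this is precisely where the hypothesis $\theta R\geq C_0 I r$ and the dimensional assumption $n\geq 2$ enter, and a counting argument on the sphere of directions shows that $C_0$ can be taken to depend only on $M,\mc N,\theta$ (the $I$-dependence is absorbed into $c_1=1/(2I)$ and thus into the final Harnack constant $C$, not into $C_0$).
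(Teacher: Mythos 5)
Your proposal is correct and follows essentially the same strategy as the paper: a pointwise bound on $|\nabla\log(v-u)|$ from Lemma~\ref{lem:der estimate}, angular Harnack inequalities on the two spheres $\partial\mc B(p;R)$ and $\partial\mc B(p;2\theta R)$, and integration of the gradient estimate along a curve connecting the two spheres that is bounded away from the holes. The only difference is in producing that curve: the paper invokes the connectedness Lemma~\ref{lem:enlarge interior radius} (stated without proof in Appendix~\ref{sec:appendix:connectedness}), whereas you construct it explicitly as a radial geodesic in a good direction chosen by a spherical-cap measure argument, which in effect supplies a self-contained proof of the relevant case of that lemma.
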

\begin{proof}
For simplicity, denote by $w(x)=u(x)-v(x)$.

By Lemma \ref{lem:enlarge interior radius}, we can take $C_0$ suitable so that there exists a $C^1$ curve $\gamma:[0,1]\rightarrow \mc A(p;2\theta R,R)$ connecting $\partial \mc B(p;2\theta R)$ and $\partial \mc B(p;R)$ so that
\begin{equation}\label{eq:weak harnack:suitable curve}
 \mathrm{Length}(\gamma)\leq C_0 R \text{\ \  and\ \  } \dist(\gamma,\cup_j\mc B(q_j;r))\geq \theta R/(C_0I).
\end{equation} 
Then Lemma \ref{lem:der estimate} gives that there exists $C_1=C_1(M,\mc N,K)$ so that for any $x\in\gamma$,
\[|\nabla\log w(x)|\leq  C_1/\dist(x,\cup_j\mc B(q_j;r)).\]
Integrating it over $\gamma$, together with (\ref{eq:weak harnack:suitable curve}) we have
\[w(\gamma(0))\leq e^{C_0^2C_1I/\theta} w(\gamma(1)).\] 
Moreover, the Lemma \ref{lem:der estimate} also implies that 
\[\max_{\partial\mc B(p;R)}w\leq e^{C_1}\min_{\partial\mc B(p;R)}w,\ \ \max_{\partial \mc B(p;2\theta R)}w\leq e^{C_1}\min_{\partial\mc B(p;2\theta R)}w.\]
Hence the desired inequality follows.
\end{proof}

\subsection{One-sided Harnack inequalities}
In this section, $M^{n+1}$ is always a closed manifold and $\mc N$ is an embedded compact minimal hypersurface in $M$ so that $\mc B(p;1)\cap \partial\mc N=\emptyset$.
\begin{theorem}\label{thm:one sided harnark for high dimension}
Let $4\leq (n+1)\leq 7$. Given $C_1,K>0$, $I\in \mathbb N$, there exist $C=C(M,\mc N,C_1,K,I)$, $C_0=C_0(M,\mc N)$ and $R_0=R_0(M,\mc N,C_1,K)$ so that if $\Gamma,\Sigma$ are minimal graphs with functions $v,u$ on $\mc A(p;r,2R)\setminus\bigcup_{j=1}^I\mc B(q_j;r)$ for $\{q_j\}\subset \mc A(p;4r,R/2)$, $R_0/4\geq R\geq C_0I r$ and $(v,u)$ is a strong $(C_1|x|^2,K)$-pair (see Definition \ref{def:fK pair}), where $|x|=\dist_\mc N(x,p)$, then there exists $\wti r\leq C_0Ir$ so that
\begin{equation}
\max_{x\in\partial \mc B(p;R)}(v-u)(x)\leq C\min_{x\in\partial\mc B(p;\wti r)}(v-u)(x).
\end{equation}
As a corollary,
\begin{equation}\label{eq:high dim thm:max value}
	\max_{\partial \mc B(p;R)}(v-u)(x)\leq Cr^2.
\end{equation}
\end{theorem}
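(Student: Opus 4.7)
My strategy is to reduce the pointwise estimate to an averaged one via spherical averages, and then prove the averaged estimate using the divergence theorem, exploiting the fact that in dimension $n\geq 3$ the weight $s^{1-n}$ is integrable at the origin. Set $w := v - u > 0$; since $u$ and $v$ both satisfy the minimal surface equation on $\mc N$, $w$ satisfies a linear elliptic equation $Lw = 0$ whose coefficients are bounded in terms of $M, \mc N, K$ (via the strong pair hypothesis together with $|\nabla u|+|\nabla v|<2$). Introduce the spherical average and flux
\[
\mc I(s) := s^{1-n}\int_{\partial\mc B(p;s)} w\,d\mu, \qquad \tau(s) := \int_{\partial\mc B(p;s)}\langle \nabla w,\nu\rangle\,d\mu.
\]
Choose $\tilde r := 2r$: since every hole center satisfies $|q_j-p|\geq 4r$, the sphere $\partial\mc B(p;\tilde r)$ lies at distance $\geq r$ from $\bigcup_j\mc B(q_j;r)$. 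Applying Lemma \ref{lem:der estimate} on $\partial\mc B(p;\tilde r)$ and Corollary \ref{cor:harnack for equivalent radius} on $\partial\mc B(p;R)$, the averages $\mc I(\tilde r)$ and $\mc I(R)$ are each comparable to the corresponding maximum and minimum of $w$, so it suffices to prove the averaged bound $\mc I(R) \leq C\,\mc I(\tilde r)$.

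A direct computation yields $\mc I'(s) = s^{1-n}\tau(s) + E(s)$, where $E(s)$ is an integrable error arising from the deviation of $\mc N$ from $\mb R^n$ (harmless once $R_0$ is chosen small). Stokes' theorem applied to $Lw=0$ on $\mc A(p;\tilde r,s)\setminus\bigcup_j \mc B(q_j;r)$ decomposes
\[
\tau(s) \;=\; \tau(\tilde r) \;+\; \sum_{q_j\in\mc A(p;\tilde r,s)}\int_{\partial\mc B(q_j;r)}\langle\nabla w,\nu\rangle \;+\; \int_{\mc A(p;\tilde r,s)\setminus\bigcup_j\mc B(q_j;r)} L_0 w,
\]
where $L_0w$ is a first/zeroth order remainder. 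The bulk integral is $O(R^{n+2})$ using $|w|\leq C_1|x|^2$, and Lemma \ref{lem:der estimate} applied on a slight thickening of each hole bounds $|\nabla w|$ on $\partial\mc B(q_j;r)$ by roughly $C|q_j|^2/r$, so each hole contributes at most $Cr^{n-2}R^2$ to the flux. Integrating $\mc I'(s) = s^{1-n}\tau(s)+E(s)$ from $\tilde r$ to $R$ and using $\int_{\tilde r}^R s^{1-n}\,ds\leq \tilde r^{2-n}/(n-2)$ (valid for $n\geq 3$) gives
\[
\mc I(R) - \mc I(\tilde r) \;\leq\; C\bigl(\,|\tau(\tilde r)|\,\tilde r^{2-n} + IR^2 + R^4\bigr).
\]
The inner-flux term $|\tau(\tilde r)|\,\tilde r^{2-n}$ is absorbed into $C\,\mc I(\tilde r)$ by applying Lemma \ref{lem:der estimate} on $\partial\mc B(p;\tilde r)$, and the remaining errors $IR^2+R^4$ become smaller than the leading term after $R_0 = R_0(M,\mc N,C_1,K)$ is chosen small. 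Converting back to a pointwise estimate via the sphere-Harnack on $\partial\mc B(p;R)$ and $\partial\mc B(p;\tilde r)$ yields the theorem, and the corollary $\max \leq Cr^2$ follows from the strong pair bound $w\leq C_1|x|^2$ on $\partial\mc B(p;\tilde r)$ (so $\min_{\partial\mc B(p;\tilde r)}w \leq 4C_1 r^2$).

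The main obstacle is the simultaneous uniform control of the $I$ small-hole flux contributions and the absorption of the inner flux $\tau(\tilde r)$ into $C\,\mc I(\tilde r)$, while keeping the final constant dependent only on $M,\mc N,C_1,K,I$ (and not on $R$ or $r$ individually). This hinges on combining the decay $|w|\leq C_1|x|^2$ with the gradient estimates of Lemma \ref{lem:der estimate} in a quantitatively sharp way. The dimensional hypothesis $n\geq 3$ is essential: it makes $\int_0^R s^{1-n}\,ds$ convergent, so that a bounded total flux produces only a bounded change in $\mc I$; in the critical case $n=2$ this integral diverges logarithmically, the flux contributions accumulate, and the much more intricate iterative scheme of Section \ref{sec:three dim} is needed instead.
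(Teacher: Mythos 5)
Your plan reproduces the general shape of the argument — spherical averages, flux, and the $n\ge 3$ integrability of $s^{1-n}$ — and the absorption of the inner-flux term $|\tau(\tilde r)|\,\tilde r^{2-n}$ into $C\,\mc I(\tilde r)$ via Lemma~\ref{lem:der estimate} is sound. But there is a genuine gap at the decisive step: the error terms $IR^2 + R^4$ that you collect from the bulk integral and from the hole fluxes are \emph{absolute} quantities, and they cannot be made ``smaller than the leading term'' by shrinking $R_0$. The hypotheses give only an upper bound $w\le C_1|x|^2$; there is no lower bound on $w$, so $\mc I(\tilde r)\le 4C_1 r^2$ can be arbitrarily small compared with $IR^2$ (indeed $r\ll R$), and the conclusion $\mc I(R)\le C\,\mc I(\tilde r)$ does not follow from $\mc I(R)-\mc I(\tilde r)\le C(\mc I(\tilde r)+IR^2+R^4)$. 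What you actually need is that the errors be \emph{multiplicative}, i.e.\ bounded by something like $Cs\,\mc I(s)$, not by a fixed power of $R$ independent of $w$. The paper achieves this through the dichotomy of Claim~\ref{claim:decrease or increase} combined with the interior maximum principle Lemma~\ref{lem:mp from pde}: either $\mc I(R)\le\alpha\,\mc I(2r)$ already (done), or $\mc I$ is quasi-monotone, and then $w$ in the bulk is comparable to $\mc I$ itself (not merely to $C_1|x|^2$), giving $|\partial_s\mc I|\lesssim s\,\mc I(s)$ which feeds into the ODE Lemma~\ref{lem:ode}.

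A secondary but related issue is that you never actually control the fluxes $\int_{\partial\mc B(q_j;r)}\langle\nabla w,\nu\rangle$: Lemma~\ref{lem:der estimate} gives no information directly on $\partial\mc B(q_j;r)$ since there the distance to the boundary of the domain vanishes, and the ``slight thickening'' you invoke fails when holes cluster (the thickened circle may run into neighbouring holes, and the distance factor in Lemma~\ref{lem:der estimate} becomes useless). The paper sidesteps the hole-flux computation entirely: it decomposes $\mc A(p;\wti r,R)$ into dyadic sub-annuli using Lemma~\ref{lem:number lemma}, applies Lemma~\ref{lem:maximum for annulus} (the ODE argument) on the hole-free sub-annuli, and applies Corollary~\ref{cor:harnack for equivalent radius} on the $\le I$ sub-annuli of bounded dyadic width that contain the holes, chaining the resulting multiplicative Harnack factors. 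Reorganizing your argument along those lines — dichotomy to get multiplicative bulk error, no flux at the holes, chain of Harnacks across a dyadic decomposition — is what closes the gap.
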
 

\begin{theorem}\label{thm:harnack for minimal graphs on minimal surfaces}
Let $n=3$. Given $C_1,K>0$, $I\in\mb N$, there exist $C=C(M,\mc N,C_1,K,I)$, $\epsilon_0=\epsilon_0(M,\mc N,C_1,K,I)$ so that if $\Gamma,\Sigma$ are minimal graphs with functions $v,u$ on $\Xi:=\mc A(p;r^2,2\epsilon)\setminus\bigcup_{j=1}^I\mc B(q_j;r^2)$ for some $\epsilon<\epsilon_0$ and 
\begin{itemize}
\item $(v,u)$ is a strong $(C_1(|x|^2+r^2),K)$-pair (see Definition \ref{def:fK pair}), where $|x|=\dist_\mc N(x,p)$;
\item $\{q_j\}\subset \mc B(p;s)$ for some $s\in[\sqrt \epsilon r/2,\epsilon/2]$;
\end{itemize}
then we have
\begin{equation*}
\max_{\partial \mc B(p;\epsilon)}(v-u)(x)\leq C\min_{\partial\mc B(p;s)}(v-u)(x).
\end{equation*}
\end{theorem}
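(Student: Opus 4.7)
The plan is to follow the outline given at the end of Section 1.3. Set $w = v - u > 0$ on $\Xi$, and introduce the flux-type quantity $\tau(q;s) = \int_{\partial \mc B(q;s)} \langle \nabla w, \nu \rangle$ together with the averaged height $\mc I(q;s) = s^{1-n} \int_{\partial \mc B(q;s)} w$. The strong $(f,K)$-pair hypothesis ensures that $w$ solves a uniformly elliptic linear equation $Lw = 0$ on $\Xi$, obtained by differentiating the minimal surface equation along the straight-line homotopy between the graphs of $u$ and $v$; the lower-order coefficients of $L$ are controlled by the $C^2$-bound $|\nabla v| + |\nabla^2 v| \leq K|v|$. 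The Harnack estimate on geodesic spheres from Corollary \ref{cor:harnack for equivalent radius} then makes $\mc I(q;s)$ comparable to both $\max_{\partial \mc B(q;s)} w$ and $\min_{\partial \mc B(q;s)} w$, so it is enough to establish the averaged one-sided bound $\mc I(p;\epsilon) \leq C\, \mc I(p;s)$.

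The next step is a logarithmic flux identity. In the dimension at hand the fundamental solution of the Laplacian on $\mc N$ is logarithmic, so for any $Lw = 0$ the quantity $\mc I(q;\cdot)$ is essentially affine in $\log s$ with slope proportional to the almost-conserved flux $\tau(q;\cdot)$. Treating the lower-order terms of $L$ as controlled errors via the strong $(f,K)$-pair bounds, the divergence theorem on $\mc A(p; 2\sqrt\epsilon r, \epsilon)$ — which avoids all the holes — delivers
\[ \mc I(p;\epsilon) \;\leq\; 2\mc I(p; 2\sqrt\epsilon r) + 2\tau_0 |\log(\sqrt\epsilon/r)|, \qquad \tau_0 := \tau(p; 2\sqrt\epsilon r). \]
Without loss of generality the second term dominates, since otherwise $\mc I(p;\epsilon)$ is already comparable to $\mc I(p; 2\sqrt\epsilon r)$ and one use of Corollary \ref{cor:harnack for equivalent radius} closes the argument. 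One may therefore assume $\mc I(p; 2\sqrt\epsilon r) \leq \tau_0 |\log(\sqrt\epsilon/r)|$, reducing the problem to proving the matching lower bound $\mc I(p;\sqrt\epsilon r) \gtrsim \tau_0 |\log(\sqrt\epsilon/r)|$.

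The crux of the proof is an iterative flux-concentration scheme driven by the $I$ holes. The flux $\tau_0$ leaving the inner sphere $\partial \mc B(p; 2\sqrt\epsilon r)$ must be accounted for by small spheres around some subset of the $q_j$'s, and a pigeonhole over the $I$ holes combined with a dyadic partition of scales $\theta \in (0,1/16)$ — this is Step A of Proposition \ref{prop:good sequence of Q from R to R3/4} — produces $y_1 \in Q := \{q_j\}$ and $\theta_1 \in (1/4^{4I+3}, 1/16)$ with $\tau(y_1; \theta_1|y_1|)$ bounded below by a constant $c(I)$, while at the same time
\[ \mc I(p;r) - c_0 \mc I(y_1; \theta_1|y_1|) \;\geq\; c_0\bigl(\log r - \log(\theta_1|y_1|)\bigr). \]
Relocating to $y_1$ and iterating — each step either consumes an available hole or terminates — yields a sequence $\{y_j\}\subset Q$ of length at most $O(I)$ with analogous telescoping inequalities. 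Summing these with the weights from Lemma \ref{lem:estimate of taulogR} and using $|Q|\leq I$ produces the desired lower bound $\mc I(p;\sqrt\epsilon r) \geq c|\log(\sqrt\epsilon/r)|\tau_0$ with $c = c(M,\mc N, C_1, K, I)$.

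Combining the upper bound with this lower bound gives $\mc I(p;\epsilon) \leq C\,\mc I(p;\sqrt\epsilon r)$, and since $s \in [\sqrt\epsilon r/2, \epsilon/2]$ one more application of Corollary \ref{cor:harnack for equivalent radius} transfers the estimate from radius $\sqrt\epsilon r$ to radius $s$ and upgrades the $\mc I$-comparison into the pointwise inequality $\max_{\partial \mc B(p;\epsilon)} w \leq C \min_{\partial \mc B(p;s)} w$, as required. The main obstacle is unambiguously the iterative step: one must arrange the choice of $(y_j,\theta_j)$ so that the flux lower bound at $y_j$ and the logarithmic height accumulation hold simultaneously at every stage, with constants depending only on $I$ and independent of the configuration of the holes — this is what forces the doubly-exponential parameter $4^{4I+3}$ and makes the combinatorial bookkeeping in Proposition \ref{prop:good sequence of Q from R to R3/4} the technical heart of the paper.
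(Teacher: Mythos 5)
Your proposal follows essentially the same approach as the paper: reduce to the averaged quantity $\mc I$, derive the logarithmic flux identity on an annulus avoiding the holes, split into cases according to whether $\tau_0|\log(\sqrt\epsilon/r)|$ or $\mc I(p;\sqrt\epsilon r)$ dominates, and in the hard case run the iterative flux-concentration scheme of Proposition \ref{prop:good sequence of Q from R to R3/4} and Lemma \ref{lem:estimate of taulogR}, finishing with Corollary \ref{cor:harnack for equivalent radius}. The only imprecision worth flagging is that the differential inequality for $w$ is not a clean $Lw=0$ with bounded lower-order coefficients; rather, via Lemma \ref{lem:minimal graph inequality} together with the gradient and Hessian estimates of Lemma \ref{lem:der estimate}, $w$ satisfies $|\Delta w|\leq Cw+Cw^3/d^4(x,Q)$ (the \emph{minimal condition} of Definition \ref{def:minimal condition}), where the singular factor $d^{-4}(x,Q)$ near the holes is precisely what drives the careful scale decomposition $\epsilon^{3/4}r^{5/4}$ inside Propositions \ref{prop:at least a large residue} and \ref{prop:good sequence of Q from R to R3/4} — so the error terms are not merely ``controlled by the $C^2$-bound on $v$'' but must be weighted against the distance to $Q$ throughout the iteration.
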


\begin{remark}
Theorem \ref{thm:harnack for minimal graphs on minimal surfaces} is equivalent to that statement for $s=\sqrt{\epsilon}r$. Namely, let $R=s/\sqrt\epsilon$. Then $R\geq r$ and $v,u$ are minimal graph functions over $\mc A(p;R^2,2\epsilon)\setminus\bigcup_{j=1}^I\mc B(q_j;R^2)$ and $\{q_j\}\subset\mc B(p;\sqrt{\epsilon }R)$.
\end{remark}

\section{Existence of Jacobi fields}\label{sec:to free boundary minimal hypersurface}
Let $(M^{n+1},\partial M,g)$ be a compact manifold with boundary of dimension $3\leq (n+1)\leq 7$. Recall that $\mc M(\Lambda,I)$ is the space of almost properly embedded free boundary minimal hypersurfaces with $\mathrm{index}\leq I$ and $\area\leq \Lambda$.

We first recall the following compactness theorem:
\begin{theorem}[\cite{GZ18}*{Theorem 4.1}]\label{thm:compactness thm}
Let $\{\Sigma_k\}\subset \mc M(\Lambda,I)$. Then up to a subsequence, $\Sigma_k$ converges smoothly and locally uniformly to $\Sigma$ on $\Sigma\setminus \mc{W}$ with finite multiplicity, where $\mc W\subset \Sigma$ is a finite subset. Moreover, if the convergence has multiplicity one and $\Sigma_k\neq\Sigma$ eventually, then $\Sigma$ has a non-trivial Jacobi field.
\end{theorem}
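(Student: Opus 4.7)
\medskip

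\noindent\textbf{Proof proposal.} The plan is to split the statement into two parts: a compactness statement producing the limit $\Sigma$ together with the finite singular set $\mc W$, and, under the multiplicity-one hypothesis, the construction of a non-trivial Jacobi field.

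For the first part, I would begin from the uniform area bound $\area(\Sigma_k)\le \Lambda$, which by Allard's compactness yields a subsequential varifold limit $V$ that is stationary with free boundary. The core step is to upgrade this to smooth convergence away from a finite bad set. The strategy is a standard ``point-picking + stability'' argument driven by the uniform index bound $\mathrm{index}(\Sigma_k)\le I$: at any interior point where $|A^{\Sigma_k}|$ does not stay bounded, one rescales $\Sigma_k$ to produce a blow-up limit that is stable (since all the index is absorbed by a finite set of centers). By Schoen--Simon curvature estimates in dimensions $3\le n+1\le 7$ one then obtains a smooth non-flat minimal limit, which contradicts the classical rigidity of stable minimal hypersurfaces in $\mathbb R^{n+1}$ (or $\mathbb R^{n+1}_+$ for a boundary point, via a free-boundary version of this rigidity). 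One concludes that curvature concentration happens on a set of cardinality at most $C(I)$, and by a covering/disjointification argument one packs these centers into at most finitely many points, which is exactly $\mc W$. Away from $\mc W$ the uniform $|A^{\Sigma_k}|$ bound, combined with the area bound, gives smooth local convergence with finite multiplicity to a smooth free boundary minimal hypersurface $\Sigma$, which inherits $\area(\Sigma)\le \Lambda$ and $\mathrm{index}(\Sigma)\le I$ by lower semicontinuity.

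For the second (multiplicity-one) part, I would write $\Sigma_k$ for $k$ large as a normal graph over $\Sigma\setminus \mc W$ with graph function $u_k\in C^\infty(\Sigma\setminus\mc W)$, $u_k\to 0$ smoothly on compact subsets of $\Sigma\setminus \mc W$. Since each $\Sigma_k$ and $\Sigma$ are free boundary minimal, $u_k$ solves a quasi-linear elliptic PDE that can be written, after dividing by $\|u_k\|_{L^\infty(K)}$ on a fixed compact $K\Subset \Sigma\setminus\mc W$, as
\begin{equation*}
L_\Sigma \hat u_k = E_k(\hat u_k, \nabla \hat u_k, \nabla^2 \hat u_k)\quad \text{on }\Sigma\setminus\mc W,\qquad \tfrac{\partial \hat u_k}{\partial\eta}-h^{\partial M}(\mathbf n,\mathbf n)\hat u_k = \tilde E_k \quad \text{on }\partial\Sigma,
\end{equation*}
where $L_\Sigma=\Delta_\Sigma+\mathrm{Ric}_M(\mathbf n,\mathbf n)+|A^\Sigma|^2$ is the Jacobi operator and $E_k,\tilde E_k\to 0$ in $C^0_{\mathrm{loc}}(\Sigma\setminus\mc W)$. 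A sign argument (since after passing to a subsequence $\hat u_k$ has definite sign, or one replaces $\hat u_k$ by $u_k^m$ the top sheet of the convergence, which is positive) together with the free-boundary Harnack inequality (Lemma~\ref{lem:gradient estimates} and Corollary~\ref{cor:harnack for equivalent radius}) yields locally uniform $C^{2,\alpha}$ bounds on $\hat u_k$ on compact subsets of $\Sigma\setminus\mc W$. Standard elliptic bootstrapping extracts a subsequential limit $w\in C^\infty(\Sigma\setminus\mc W)$, not identically zero on any fixed compact $K$, solving the Jacobi system \eqref{equ:jacobi} on $\Sigma\setminus\mc W$ with the free-boundary Robin condition on $\partial\Sigma\setminus\mc W$.

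The remaining, and most delicate, step is to show that $w$ extends to a genuine Jacobi field on all of $\Sigma$, i.e.\ that the points of $\mc W$ are removable singularities. For interior points of $\mc W$ this is the classical removable-singularity analysis for solutions of the Jacobi equation with a local $L^\infty$ bound, which works because the codimension of $\mc W$ inside $\Sigma$ equals $n\ge 2$. The real obstacle is at points $p\in\mc W\cap\partial M$, in particular touching points $p\in\mathrm{Touch}(\Sigma)\cap\mc W$: here $w$ need only be a priori $L^\infty_{\mathrm{loc}}$ near $p$ after normalization, and the PDE is two-sided (graph sheets can be pinched by $\partial M$). I expect this to be the main obstacle, and my plan for it is exactly the mechanism sketched in the introduction: use the one-sided Harnack inequality of Theorem~\ref{thm:intro:harnack} (which is designed precisely to trade geodesic annuli with $\le I$ extra holes against a geodesic ball) to bound $\max w$ on a small sphere around $p$ by the minimum of $w$ on an even smaller sphere, and then couple this with Lemma~\ref{lem:der estimate} to get uniform $C^0$, hence $W^{2,p}$, control up to $p$. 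Once $w$ is bounded near each $p\in\mc W$, a capacity / cutoff argument (multiplying the Jacobi equation by $w\varphi^2$ with $\varphi$ a logarithmic cutoff vanishing near $\mc W$) shows the singularity is removable in $W^{1,2}$, and elliptic regularity promotes $w$ to a smooth Jacobi field on $\Sigma$. Finally one checks $w\not\equiv 0$: the normalization guarantees this on some fixed compact set, so by unique continuation $w$ is non-trivial on all of $\Sigma$, completing the theorem.
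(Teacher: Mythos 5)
The theorem you are proving is stated in the paper as a \emph{citation} of \cite{GZ18}*{Theorem 4.1}; this paper does not re-prove it, and only recalls it to set up the genuinely new result Theorem~\ref{thm:degenerate theorem} ($m\geq 2$). So there is no ``paper's own proof'' here to compare against; what I can do is compare your sketch against the argument of \cite{GZ18}, which the paper summarizes in the introduction and in the surrounding discussion.

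Your compactness framework (varifold limit, point-picking with the index bound, Schoen--Simon curvature estimates, a finite set $\mc W$, semicontinuity of area and index) is the standard one and matches \cite{GZ18} in outline. The gap is in the mechanism you choose for the removable-singularity step at touching points $p\in\mc W\cap\mathrm{Touch}(\Sigma)$. You invoke Theorem~\ref{thm:intro:harnack}, the multi-hole one-sided Harnack inequality on $\mc A(p;r^2,2\epsilon^2)\setminus\bigcup_{j=1}^I\mc B(q_j;r^2)$. That inequality is the new technical result of \emph{this} paper, designed specifically for the case $m\geq 2$: the $I$ holes appear because the bottom sheet $u^1_k$ of a multi-sheeted convergence is a graph only away from the set $S_k$ of high-angle points of $\Sigma_k$, which must be covered by at most $I$ balls using the index bound. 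In the multiplicity-one situation of Theorem~\ref{thm:compactness thm} there is only one sheet, no bottom-sheet/top-sheet dichotomy, and no covering of $S_k$ to perform; what \cite{GZ18} actually uses is Claim~D (recorded in the paper's outline as $\max_{\partial\mc B(p;r)}|u^m|\leq C\max_{\partial\mc B(p;\epsilon)}u^m$), which is a one-sided Harnack on a \emph{full} annulus obtained via White's minimal foliation argument and the sign constraint coming from $\partial M$. Using the $I$-hole Harnack is not circular, but it is neither the cited argument nor available to the authors of \cite{GZ18}, and it would still have to be supplied with the correct pair $(v,u)$ (a foliation leaf and the single sheet), which is a different setup from the one Theorem~\ref{thm:intro:harnack} is calibrated for.

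Two further points worth flagging. First, your assertion that ``after passing to a subsequence $\hat u_k$ has definite sign'' is not justified in the multiplicity-one case: the single graph function $u_k$ over $\Sigma\setminus\mc W$ can change sign, and the correct Harnack input near a non-touching singular point is the two-sided gradient estimate of Lemma~\ref{lem:gradient estimates} rather than a positivity argument. Second, the boundedness of the normalized limit $w$ near $\mc W$ is \emph{not} automatic and is the real crux (the paper even remarks, in the higher-multiplicity setting, that the limit may a priori be $+\infty$); your plan does acknowledge this, but the source of the $L^\infty$ bound is precisely Claim~D in \cite{GZ18}, not a consequence of the general elliptic machinery you invoke beforehand.
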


We now review the convergence. We assume that $\Sigma$ is two-sided.

Let $\mf{n}$ be the unit normal of $\Sigma$ and $X\in \mathfrak{X}(M,\Sigma)$ (see \cite{GZ18}*{\S 2}) be an extension of $\mf{n}$. Suppose that $\phi_t$ is a one-parameter family of diffeomorphisms of $\wti M$ generated by $X$. For any domain $U\subset \Sigma$ and  small $\delta>0$, $\phi_t$ produces a neighborhood $U_\delta$ of $U$ with thickness $\delta$, i.e., $U_\delta=\{\phi_t(x)\,|\, x\in U, |t|\leq \delta\}$. If $U$ is in the interior of $\Sigma$, then $U_\delta$ is the same as $U\times [-\delta,\delta]$ in the geodesic normal coordinates of $\Sigma$ for $\delta$ small.  Now fix a domain $\Omega\subset \subset \Sigma\setminus \mc{W}$, by the convergence $\Sigma_k\to \Sigma$, we know that for $k$ sufficiently large, $\Sigma_k\cap \Omega_\delta$ can be decomposed as $m$ graphs over $\Omega$ which can be ordered by height
\[u_k^1<u_k^2<\cdots<u_k^m.\]

Theorem \ref{thm:compactness thm} says that $\Sigma$ is degenerate when $m=1$. In this paper, we improve Theorem \ref{thm:compactness thm}:
\begin{theorem}\label{thm:degenerate theorem}
Let $\{\Sigma_k\}\subset \mc M(\Lambda,I)$ as in Theorem \ref{thm:compactness thm}. Suppose that $m\geq 2$. Then $\Sigma$ is degenerate, i.e. $\Sigma$ has a non-trivial Jacobi field.
\end{theorem}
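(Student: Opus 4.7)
Following Section 1.3, the plan is to construct a non-trivial Jacobi field of $\Sigma$ as a rescaled limit of height differences between sheets of $\Sigma_k$, and then to show that every point of the finite set $\mc W$ is a removable singularity. Fix $\Omega \subset\subset \Sigma \setminus \mc W$ so that, for $k$ large, $\Sigma_k \cap \Omega_\delta$ decomposes into $m \geq 2$ ordered graphs $u_k^1 < \cdots < u_k^m$. Put $w_k := u_k^m - u_k^1 > 0$ and $\tilde w_k := w_k / w_k(x_0)$ for a fixed $x_0 \in \Sigma \setminus \mc W$. Since both $u_k^m$ and $u_k^1$ solve the minimal surface equation, the difference $\tilde w_k$ satisfies a linear elliptic equation whose coefficients and Robin boundary condition converge, on $\Sigma \setminus \mc W$, to those of the Jacobi system \eqref{equ:jacobi}. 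Lemma \ref{lem:gradient estimates} and standard Schauder theory then yield $\tilde w_k \to w$ in $C^{2,\alpha}_{\mathrm{loc}}(\Sigma \setminus \mc W)$, with $w > 0$ solving \eqref{equ:jacobi} away from $\mc W$.

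The crux is to extend $w$ smoothly across $\mc W$. At interior singularities $p \in \mc W \setminus \partial M$, removability follows from the uniform local bound on $\tilde w_k$ (inherited from smooth convergence on a punctured neighborhood) and the standard Bôcher/capacity argument of \cite{ACS17,Sha17}. The genuinely new case is a touching point $p \in \mc W \cap \mathrm{Touch}(\Sigma)$; assume $\partial M$ sits on the non-positive side of $\Sigma$ at $p$. By \cite{GZ18}*{Claim D}, the normalization of the top sheet $u_k^m$ already extends smoothly past $p$, but to get a bound on $\tilde w_k$ near $p$ one must establish the same for $-u_k^1$, and this is where Theorem \ref{thm:intro:harnack} enters. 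The plan is to argue by contradiction: if the normalized $-u_k^1$ does not extend, then Corollary \ref{cor:harnack for equivalent radius} on $\partial \mc B(p;\epsilon)$ allows one to pass to a subsequence along which
\[
h_k := \min_{\partial \mc B(p;\epsilon)} (-u_k^1) \;\geq\; \kappa \, \max_{\partial \mc B(p;\epsilon)} u_k^m
\]
for any prescribed $\kappa \gg 1$. Since $\mathrm{index}(\Sigma_k) \leq I$, the subset $S_k \subset \Sigma_k$ where the graph property fails is covered by at most $I$ balls $B(q_j;\rho(q_j))$ with $\rho(q) = L(|q|^2 + h_k/\kappa)$, as in Claim \ref{claim:I balls cover Sk}. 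On the component $\Sigma_k'$ of $\Sigma_k \setminus \bigcup_j B(q_j;\rho(q_j))$ containing the bottom sheet, the pair $(0, u_k^1)$ becomes a strong $(C(|x|^2 + h_k/\kappa), K)$-pair over $\mc B(p;\epsilon) \setminus \bigcup_j \mc B(q_j; r_j)$ with $r_j \asymp \sqrt{h_k/\kappa}$. Iterating Theorem \ref{thm:intro:harnack} at most $I$ times, peeling off the excised balls one at a time, yields
\[
\max_{\partial \mc B(p;\epsilon)} (-u_k^1) \;\leq\; C \, \min_{\partial \mc B(p; C_0 I \sqrt{h_k/\kappa})} (-u_k^1) \;\leq\; C\, \frac{h_k}{\kappa},
\]
where the last inequality uses the quadratic upper bound built into the pair. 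For $\kappa > C$ this contradicts the definition of $h_k$.

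Once both $u_k^m$ and $-u_k^1$ are known to rescale to smooth functions near every $p \in \mc W$, a further diagonal subsequence ensures that $\tilde w_k$ is uniformly bounded on a punctured neighborhood of each such $p$; the removable singularity theorem for bounded solutions of \eqref{equ:jacobi} then extends $w$ smoothly across $\mc W$ as a Jacobi field of $\Sigma$, non-trivial because $w(x_0) = 1$. The main technical obstacle will be the iterative application of Theorem \ref{thm:intro:harnack}: verifying at each step that the remaining graphical region still satisfies the strong $(C_1(|x|^2 + r^2), K)$-pair hypothesis with the correct effective inner radius — in particular the $C^2$ bound, which one gets from Lemma \ref{lem:der estimate} and the index-driven covering — and checking that the excised balls nest in a way compatible with the hypotheses of Theorem \ref{thm:intro:harnack} so that the process terminates in at most $I$ steps.
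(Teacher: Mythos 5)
Your plan captures the overall architecture of the paper's argument — the difference of top and bottom sheets, the index-driven covering by $I$ balls, and the iterated application of Theorem \ref{thm:intro:harnack} to rule out blow-up at touching points — but it misses one substantive case and one technical device, both of which the paper addresses explicitly.

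First, the normalization and the vanishing case. You rescale by $w_k(x_0)$ for a fixed interior point $x_0$; the paper rescales by
\[
\Lambda_{k,\epsilon}\;=\;\max_{p\in\mc W}\;\max_{\partial\mc B(p;\epsilon)}\{u^m_k,\,-u^1_k\},
\]
which measures the size of the \emph{individual} sheets rather than their difference, and then splits into two cases according to whether $w=\lim w_k/\Lambda_{k,\epsilon}$ vanishes identically on $\Sigma\setminus\mc W$ or not. These two scales are not comparable in general: if both $u^m_k$ and $u^1_k$ bend in the same direction near a touching point with a tiny gap between them, one has $w_k(x_0)\ll\Lambda_{k,\epsilon}$. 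In that situation your contradiction argument establishes (via Claim D and the Harnack iteration) only that $u^m_k/\max_{\partial\mc B(p;\epsilon)}u^m_k$ and $(-u^1_k)/\max_{\partial\mc B(p;\epsilon)}(-u^1_k)$ are bounded near $p$, i.e. $w_k\lesssim\Lambda_{k,\epsilon}$ near $p$; this does \emph{not} give $w_k\lesssim w_k(x_0)$, so $\tilde w_k$ may still blow up. The paper sidesteps this by producing the Jacobi field from $\overline u^m_k=u^m_k/\Lambda_{k,\epsilon}$ (and showing $\overline u^1_k$ converges to the same limit) precisely when $w=0$, with nontriviality coming from the defining property of $\Lambda_{k,\epsilon}$, not from the difference of sheets. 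Your sentence ``once both $u_k^m$ and $-u_k^1$ are known to rescale to smooth functions near every $p\in\mc W$, a further diagonal subsequence ensures that $\tilde w_k$ is uniformly bounded'' conflates these two scales and leaves a genuine gap.

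Second, the pair fed into the Harnack theorem. You write the pair as $(0,u_k^1)$, but $u_k^1$ need not be negative near $p$, so $0-u_k^1$ is not a priori positive and the $(f,K)$-pair hypothesis of Theorem \ref{thm:intro:harnack} can fail. The paper explicitly notes ``$u_k$ may not be negative everywhere'' and replaces $0$ by the minimal foliation slice $v_t$ with $t=\max_{\partial\mc B(p;\epsilon)}u^m_k$, so that $(v_t,u_k)$ is a strong $(C_1(|x|^2+r^2),K)$-pair; the quadratic-barrier bound then comes from the graph $u_{\partial M}$ of $\partial M$ rather than from $u_k^1$ itself. This is fixable, but it is a step you would need to add, and the verification that $(v_t,u_k)$ stays a strong pair through the iteration (Claims \ref{claim:second order graph on Sigmat}--\ref{claim:3d good pair}) is nontrivial bookkeeping that your sketch does not anticipate.
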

\begin{proof}
If $\Sigma$ is one-sided, we can then construct a non-trivial Jacobi field over $\wti{\Sigma}$ and the construction is similar to the case when $\Sigma$ is two-sided. Hence, in the following, we will assume that $\Sigma$ is two-sided.

For any $p\in\mc W$ and $\epsilon\ll 1$, set 
\[\lambda_k(p,\epsilon)=\max_{\partial \mc B(p;\epsilon )}\{u^m_k,-u^1_k\}\ \ \text{ and }\ \ \Lambda_{k,\epsilon}=\max_{p\in\mc W}\lambda_k(p,\epsilon).\]

Set $w_k=u_k^m-u_k^1$ and $\overline w_k=w_k/\Lambda_{k,\epsilon}$. Taking an exhaustion $\{\Omega_i\}$ of $\Sigma\setminus\mc W$, we obtain a Jacobi field $w$ on $\Sigma\setminus \mc W$. Note that $w$ may be trivial or unbounded.

We pause to give the following claim, which is from \cite{GZ18}*{Claim D}.
\begin{claim}\label{claim:one is good}
For each $p\in\mc W$, there exists a constant $C=C(M,\Sigma,p,\epsilon)$ such that either $j=1$ or $j=m$ satisfies the following
\[\limsup_{r\rightarrow 0}\limsup_{k\rightarrow\infty}\frac{\max_{\partial \mc B(p;r)}|u_k^j|}{\max_{\partial\mc B(p;\epsilon)}u_k^j}\leq C.\]
\end{claim}
\begin{proof}[Proof of Claim \ref{claim:one is good}]
First assume that $p\in\mathrm{Int \Sigma}\cap\partial M$. Then without loss of generality, we assume that $\partial M$ lies on the negative side of $\Sigma$ near $p$ as in \cite{GZ18}. Then \cite{GZ18}*{Page 19, Claim C} gives that for any $r\in (0,\epsilon)$, 
\[\max_{x\in\partial \mc B(p;r)}u^m_k>0 \text{\ \  for $k$ sufficiently large.}\]
Then the conclusion of Claim \ref{claim:one is good} for $j=m$ follows from the argument in \cite{GZ18}*{Page 20-21, Claim D}.

It remains to consider $p\notin\mathrm{Int}\Sigma\cap\partial M$. Then the minimal foliation argument works for both $u_k^1$ and $u^m_k$. Note that either
\[\max_{x\in\partial \mc B(p;r)}u^m_k>0 \text{\ \  for $k$ sufficiently large,}\]
or
\[\max_{x\in\partial \mc B(p;r)}-u^1_k>0 \text{\ \  for $k$ sufficiently large,}\]
Then the desired result also follows from the argument in \cite{GZ18}*{Page 20-21, Claim D}.
\end{proof}

We first consider the case $w=0$ on $\Sigma$. Then we set $\overline u_k^j=u^j_k/\Lambda_{k,\epsilon}$ for $1\leq j\leq m$. Then for any $\Omega\subset\subset \Sigma\setminus\mc W$, $\overline u^m_k$ is uniformly bounded. Hence $\overline u^m_k$ locally smoothly converges to a Jacobi field $\overline u$. It follows that $\overline u^1_k\rightarrow \overline u$ since $w=0$. 

Therefore, $\overline u$ is smooth through $\mc W$. Then by the definition of $\Lambda_{k,\epsilon}$, we have either $\max \overline u^m_k=1$ or $\max(-\overline u^1_k)=1$. This gives that $\overline u$ is non-trivial. Thus we also get a nontrivial Jacobi field in this case.

It remains to consider $w$ is non-trivial. Then Theorem \ref{thm:degenerate theorem} follows from this lemma:
\begin{lemma}\label{lem:w is bounded}
$w$ is bounded.
\end{lemma}
We postpone the proof to the next subsection.

Note that for $\Omega\subset\subset\Sigma\setminus \mc W$, $w$ is uniformly bounded.
Denote by $\mc W_0$ the subset of $\mc W\cap\partial M\setminus \partial \Sigma$ so that 
\[\text{ given } \epsilon>0, B^M(p;\epsilon)\cap \partial \Sigma_k\neq \emptyset \text{ for $k$ sufficiently large}.\]

It follows from \cite{ACS17}*{Section 6} (see also \cite{GZ18}*{Section 2.3}) that $w$ is smooth through $W\setminus \mc W_0$.
\end{proof}

\begin{remark}
Note that in the Proof of Theorem \ref{thm:degenerate theorem}, for $p\notin\mc W$, $w_k/w_k(p)$ always converges to a positive Jacobi field $w'$ with possibly discrete singularities on $\mc W$, where $w'$ may be infinity by Lemma \ref{lem:gradient estimates}. Then a classical PDE theory (a cut-off trick) shows that $\Sigma$ is stable, which implies that the Jacobi field is positive.
\end{remark}

The following subsections are devoted to the proof lemma \ref{lem:w is bounded}.
\subsection{Proof of Lemma \ref{lem:w is bounded}}
We prove it by a contradiction argument. Suppose that $w$ is unbounded.

Without loss of generality, we assume that $\partial M$ lies on the negative side of $\Sigma$ around $p$. Then by the Claim D in \cite{GZ18}, there exists a constant $C=C(K,\epsilon)$ such that
\[\limsup_{r\rightarrow 0}\limsup_{k\rightarrow\infty}\frac{\max_{\partial \mc B(p;r)}|u_k^m|}{\max_{\partial \mc B(p;\epsilon)}u_k^m}\leq C.\]
Hence $u^m_k/\Lambda_{k,\epsilon}$ is uniformly bounded near $p$. Together with the assumption of $w$ is unbounded, then we have
$u^1_k/\Lambda_{k,\epsilon}$ is unbounded around $p$ as $k\rightarrow\infty$. Then for any $\kappa>0$ (would be fixed later), we can shrink $\epsilon$ so that for $k$ sufficiently large, 
\begin{equation}\label{eq:kappa}
\max_{\partial\mc B(p;\epsilon) }{-u_k^1}>\kappa \cdot \max_{\partial\mc B(p;\epsilon)}u^m_k.
\end{equation}

Recall that $\partial M$ is smooth. Hence there exists a constant $C_1>1$ so that the graph function $u_{\partial M}$ of $\partial M$ on $B(p;\epsilon)$ satisfying
\begin{equation}\label{eq:second order boundary}
u_{\partial M}\geq -C_1|x|^2 \ \ \text{ for } |x|\leq\epsilon,\ \ \text{ where } |x|=\dist_\Sigma(x,p).
\end{equation}
We can also take $\delta$ small enough so that the minimal foliation near $\mc B(p;\epsilon)$ containing $\mc B(p;\epsilon)\times [-\delta,\delta]$. Denote by $\pi$ the projection to $\Sigma$.

Set $h=\min_{\partial\mc B(p;\epsilon)}-u_k^1$ and
\[S_k=\{x\in\Sigma_k\cap \mc B(p;\epsilon)\times[-\delta,\delta]:|\langle\n_k,\nabla d\rangle|\leq 1/2\},\]
where $\n_k$ is the unit normal vector field of $\Sigma_k$ and $d$ is the signed distance function to $\Sigma$. Then $S_k$ is a closed set of $\Sigma_k$. Note that $\epsilon$ can be taken small enough so that $\partial \Sigma_k\cap (B(p;\epsilon)\times [-\delta,\delta])\subset S_k$.

Let $\rho(x)=L(|\pi(x)|^2+h/\kappa)$, where $L$ is a constant (to be specified later).
\begin{claim}\label{claim:I balls cover Sk}
There exist $\{x_j\}_{j=1}^I\subset \Sigma_k$ so that 
\[S_k\subset \bigcup_{j=1}^I B^{M}(x_j;\rho(x_j)).\] 
\end{claim}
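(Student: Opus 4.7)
I would argue by contradiction, using the Morse index bound on $\Sigma_k$. Suppose no collection of $I$ balls of the form $B^M(x_j;\rho(x_j))$ with $x_j\in\Sigma_k$ covers $S_k$. The idea is then to extract $I+1$ well-separated points in $S_k$, produce on each an unstable test section supported in a small ball, and contradict $\mathrm{index}(\Sigma_k)\le I$.

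The first step is a standard Vitali/greedy selection. Iteratively pick $y_1,y_2,\ldots\in S_k$ with $y_{j+1}\notin\bigcup_{i\le j}B^M(y_i;\rho(y_i))$; by the contradiction hypothesis this procedure produces at least $y_1,\ldots,y_{I+1}$. Since $\rho(x)=L(|\pi(x)|^2+h/\kappa)$ is Lipschitz in $x$ with a small constant on the scale $\rho$ itself (as $\nabla|\pi(x)|^2$ is tame on $\mc B(p;\epsilon)$), a standard Vitali refinement then yields $I+1$ points $y_1,\dots,y_{I+1}\in S_k$ whose balls $B^M(y_j;c\rho(y_j))$ are pairwise disjoint in $M$ for a geometric constant $c=c(M,\Sigma)$.

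The central step is to show that, because $y_j\in S_k$ (so $\n_k$ at $y_j$ tilts by at least $60^\circ$ from $\nabla d$), the piece $\Sigma_k\cap B^M(y_j;c\rho(y_j))$ carries a compactly supported $\phi_j$ with $Q(\phi_j,\phi_j)<0$. The rationale is that the tilting forces $\Sigma_k$ to return to graphical form within a short distance: outside $S_k$ the surface is graphical over $\Sigma$, while the barriers $u_k^m\le h/\kappa$ above (from (\ref{eq:kappa}) combined with the Harnack from \cite{GZ18}) and $u_{\partial M}\ge -C_1|x|^2$ below (from (\ref{eq:second order boundary})) trap any tilted portion of $\Sigma_k$ in a vertical window of order $\max(|\pi(y_j)|^2,h/\kappa)$. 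Choosing $L$ large (depending on $C_1$, $\kappa$, and the Harnack constants), this forces $\Sigma_k$ to turn back within the ball $B^M(y_j;c\rho(y_j))$ and concentrates $|A^{\Sigma_k}|^2$ there at the right scale; a logarithmic cut-off built from the turning then yields an unstable $\phi_j$ compactly supported in $\Sigma_k\setminus\partial\Sigma_k$.

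Since the balls $B^M(y_j;c\rho(y_j))$ are pairwise disjoint, the $\phi_j$'s have pairwise disjoint supports and thus span an $(I+1)$-dimensional subspace of sections of $N\Sigma_k$ on which $Q$ is negative definite; this contradicts $\mathrm{index}(\Sigma_k)\le I$ and proves the claim. The main obstacle is the central step: quantifying the concentration of the second fundamental form at the precise scale $\rho(y)=L(|\pi(y)|^2+h/\kappa)$ and calibrating $L$ to the constants $C_1$, $\kappa$, and the Harnack constants so that the unstable test function really exists in $B^M(y_j;c\rho(y_j))$ rather than in some slightly larger ball.
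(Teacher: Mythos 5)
Your proposal shares the opening Vitali/greedy selection with the paper (the paper additionally chooses each new $x_{j+1}$ to \emph{maximize} $\rho$, which makes the disjointness of $\{B^M(x_j;\rho(x_j)/3)\}$ a one-line consequence of $\rho(x_j)\le\rho(x_i)$ for $j>i$, rather than invoking a Lipschitz refinement of the Vitali lemma). But after that the logical routes diverge, and yours has a genuine gap at exactly the spot you flag.

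The paper uses the index bound in the \emph{contrapositive} direction: with $I+1$ pairwise disjoint balls in hand, \cite{GZ18}*{Lemma 2.11} says that on at least one of them $\Sigma_k$ must be \emph{stable}. On that ball the Schoen--Simon curvature estimate (\cite{GZ18}*{Theorem 3.2}) gives $|A^{\Sigma_k}|^2\le C_2/\rho(y)^2$ on the half-ball. The tilt $\langle\nu,\nabla d\rangle>1/2$ at $y\in S_k$ combined with the curvature bound forces $\frac{d}{dt}d(\gamma(t))\ge 1/4$ along a geodesic of $\Sigma_k$ for a definite fraction of $\rho(y)$, so the geodesic raises $d$ to order $\sqrt{L}h/\kappa$ while its base point barely moves; for $L$ large this escapes the slab allowed by the upper foliation barrier and the lower $\partial M$ barrier (\ref{eq:second order boundary}), a geometric contradiction. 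Nowhere does the paper need to construct a negative test section.

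Your plan instead requires showing that each of the $I+1$ balls \emph{carries an unstable test section}. This is the hard direction, and I do not see how to carry it out. Tilting alone does not force $|A^{\Sigma_k}|^2$ to concentrate at scale $\rho(y)^{-2}$; the trapping-between-barriers argument gives some bending, but quantifying it at the precise radius $\rho(y)=L(|\pi(y)|^2+h/\kappa)$ is exactly what you would need a Schoen--Simon-type estimate \emph{in reverse} for, and no such tool is available. Moreover the ``logarithmic cut-off built from the turning yields an unstable $\phi_j$'' step is a surface ($n=2$) heuristic: in dimensions $3\le n\le 6$, large $\int|A|^2$ at a scale does not, by itself, produce a compactly supported negative direction of $Q$ via log cut-offs. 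The paper sidesteps all of this by only ever using stability as an \emph{input} (to get curvature control), never instability as a \emph{goal}. I'd recommend reorganizing the contradiction argument along those lines.
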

\begin{proof}[Proof of Claim \ref{claim:I balls cover Sk}]
First take any $x_1\in S_k$ so that 
\[\rho(x_1)=\max_{x\in S_k}\rho(x).\]
If we have $x_1,...,x_j$, then take $x_{j+1}\in S_k\setminus \bigcup_{l=1}^jB^M(x_l;\rho(x_l))$ so that
\[\rho(x_{j+1})=\max\{\rho(x):x\in S_k\setminus \bigcup_{l=1}^jB^M(x_l;\rho(x_l))\}.\]
If the process does not stop in $I$ steps, then there exists $\{x_j\}_{j=1}^{I+1}\subset \Sigma_k$ satisfying
\[\dist_M(x_j,x_i)\geq \rho(x_i), \text{ \ \ for \ \ } j>i.\]
Note that for $j>i$, by the choice of $x_j$, $\rho(x_j)\leq \rho(x_i)$. It follows that  
\[ B^M(x_j;\rho(x_j)/3)\cap B^M(x_i;\rho(x_i)/3)=\emptyset \text{ for } i\neq j.\]
Then applying \cite{GZ18}*{Lemma 2.11}, there exists $y\in \{x_j\}_{j=1}^{I+1}$ so that $\Sigma_k$ is stable in $B^M(y;\rho(y)/3)$ since the Morse index of $\Sigma_k$ is bounded by $I$. Using the curvature estimate \cite{GZ18}*{Theorem 3.2}, we have
\begin{equation}
\label{E:estimates of A_k}
\sup_{x\in\Sigma_k \cap B^M(y;\rho(y)/4)} |A^{\Sigma_k}|^2(x)\leq C_2/(\rho(y))^2
\end{equation}
for some uniform constant $C_2>0$.

Since $y\in S_k$, then there exists $\nu\in T_y\Sigma_k$ so that $\langle\nu,\nabla d\rangle>1/2$. let $\gamma$ be the geodesic starting at $y$ with direction $\nu$. By direct computation,
\begin{align*}
&\frac{d}{ds}\langle \gamma'(s),\nabla d\rangle\\
=&A_k(\gamma'(s),\gamma'(s))\langle\nabla d,\mathbf n_k\rangle+\nabla^2d(\gamma'(s),\gamma'(s))\rangle\\
  =&A_k(\gamma'(s),\gamma'(s))\langle\nabla d,\mathbf n_k\rangle+\nabla^2d((\gamma'(s))^\top,(\gamma'(s))^\top),
\end{align*}
where $A_k$ is the second fundamental form of $\Sigma_k$ and $(\gamma'(s))^\perp$ is the projection to $\{d^{-1}(d(\gamma_k(s)))\}$. Hence for $t\in (0,\rho(y)/(10C_2))$,
\[
\frac{d}{dt} d(\gamma(t))=  \langle \gamma'(t),\nabla d\rangle 
  \geq \frac{1}{2}-\int_0^t (|A_k(\gamma(s))|+1)ds
  \geq \frac{1}{2}-2C_2t/\rho(y)\geq \frac{1}{4}.
\]
Therefore, $\gamma(t)\notin \partial\Sigma_k$ for $t\in (0,\rho(y)/(10C_2))$.

Furthermore, 
\begin{align*}
  d(\gamma(\frac{\rho(y)}{10C_2}))=&d(\gamma(0))+\int_0^{\rho(y)/(10C_2)} \langle\nabla d,\gamma'(t)\rangle\, dt\\
  \geq& -C_1|\pi(y)|^2+\frac{1}{4}\cdot \rho(y)/(10C_2)\\
  \geq& (\frac{1}{40C_2}-\frac{C_1}{L})\rho(y).
\end{align*}
Then we can take $L=L(C_1,C_2)$ large enough so that 
\[(\frac{1}{40C_2}-\frac{C_1}{L})\rho(y)\geq \rho(y)/\sqrt L=\sqrt L\cdot h/\kappa,\]
which leads to a contradiction to our assumptions. This completes the proof of Claim \ref{claim:I balls cover Sk}.
\end{proof}

Now let $\Sigma_k'$ be the component of $\Sigma_k\setminus \bigcup_{j=1}^IB(x_j;\rho(x_j))$ containing the bottom sheet graph on $\partial\mc B(p;\epsilon)$. By the definition of $S_k$, we have $|\langle\n_k,\nabla d\rangle|>1/2$. Thus we conclude that $\Sigma_k'$ is a minimal graph on $\pi(\Sigma_k')$. Denote by $u_k$ the minimal graph function. A standard computation (see Appendix \ref{sec:appendix:minimal graph function}) gives that 
\[|\langle\n_k,\nabla d\rangle|=1/\sqrt{1+|\nabla u_k|^2}.\]
It follows that
\begin{equation}\label{eq:uk gradient bound}
|\nabla u_k(x)|\leq 1, \text{ \ \ for \ \ } x\in\pi(\Sigma_k').
\end{equation}

Note that $u_k$ may not be negative everywhere. To overcome this, we recall the minimal foliation near $\Sigma$.  Let $t=\max_{\partial \mc B(p;\epsilon)}u_k^m$ and $\Sigma_t$ be the slice in the minimal foliation, i.e. $\Sigma_t$ is a minimal graph on $\mc B(p;\epsilon)$ and $v_t=t$ on $\partial \mc B(p;\epsilon)$, where $v_t$ is the graph function. Then for $x\in \mc B(p;\epsilon/2)$,
\begin{equation}\label{eq:inequality for vt}
|\nabla^2 v_t|+|\nabla v_t|\leq K|v_t|,
\end{equation}
for some universal constant $K$. Moreover, by the assumption (\ref{eq:kappa}),
\begin{equation}\label{eq:upper bound inequality for vt}
h\geq \kappa v_t.
\end{equation}
Without loss of generality, we can assume that
\begin{equation}\label{eq:vt gradient bound}
|\nabla v_t(x)|\leq 1 \text{ \ \ for all \ \ } x\in\mc B(p;\epsilon).
\end{equation}

\begin{claim}\label{claim:second order graph on Sigmat}
For $x\in\Sigma$ with $|x|\geq\sqrt{h/\kappa}$,
\[(v_t-u_k)(x)\leq (C_1+1)|x|^2.\]
\end{claim}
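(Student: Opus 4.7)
The plan is to bound the two terms $v_t(x)$ and $-u_k(x)$ separately, each by a suitable multiple of $|x|^2$, and then add them.

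First I would bound $-u_k(x)$ from above. Since $\Sigma_k$ is almost properly embedded in $M$ and $\partial M$ lies on the negative side of $\Sigma$ near $p$, the graph $\Sigma_k'$ sits above $\partial M$ over $\pi(\Sigma_k')$. Consequently, the graph function satisfies $u_k(x) \geq u_{\partial M}(x)$ pointwise, and by the second-order bound \eqref{eq:second order boundary} we get
\[
-u_k(x) \;\leq\; -u_{\partial M}(x) \;\leq\; C_1 |x|^2.
\]

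Next I would bound $v_t(x)$. By the Harnack-type control \eqref{eq:upper bound inequality for vt} on the foliation leaf, $v_t(x) \leq h/\kappa$ for all $x \in \mc B(p;\epsilon/2)$. Thus whenever $|x| \geq \sqrt{h/\kappa}$, we have
\[
v_t(x) \;\leq\; h/\kappa \;\leq\; |x|^2.
\]
Combining the two estimates gives
\[
(v_t - u_k)(x) \;\leq\; |x|^2 + C_1|x|^2 \;=\; (C_1+1)|x|^2,
\]
as claimed.

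There is no real obstacle here; the only thing to verify carefully is that $\Sigma_k'$ genuinely lies above $\partial M$ on the relevant region, which is immediate from $\Sigma_k \subset M$ and the choice of orientation. The two quantitative ingredients, namely the second-order touching estimate for $\partial M$ and the pointwise control of the foliation leaf $\Sigma_t$, have already been set up in \eqref{eq:second order boundary} and \eqref{eq:upper bound inequality for vt}, so the claim reduces to the two-line argument above.
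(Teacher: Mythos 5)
Your proof is correct and matches the paper's argument: both combine $v_t \leq h/\kappa \leq |x|^2$ (from \eqref{eq:upper bound inequality for vt} and $|x| \geq \sqrt{h/\kappa}$) with $u_k \geq u_{\partial M} \geq -C_1|x|^2$ (since $\Sigma_k \subset M$ and \eqref{eq:second order boundary}), yielding $(v_t - u_k)(x) \leq (C_1+1)|x|^2$. You make explicit the step $u_k \geq u_{\partial M}$, which the paper leaves implicit, but the decomposition and ingredients are identical.
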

\begin{proof}[Proof of Claim \ref{claim:second order graph on Sigmat}]
Note that $v_t\leq h/\kappa\leq |x|^2$ for $|x|\geq \sqrt{k/\kappa}$. Together with (\ref{eq:second order boundary}), we have
\[
(v_t-u_k)(x)\leq |x|^2-u_{\partial M}(x)\leq (C_1+1)|x|^2,
\]
which is the desired inequality.
\end{proof}

Denote by $Q=\{\pi(x_j)\}$ and $s_1=\max_{1\leq j\leq I}\rho(x_j)$. Recall that $\pi$ is the projection to $\Sigma$.

\begin{claim}\label{claim:suitable balls and radius}
There exist $r_1\in [8s_1,4^{3I+3}s_1]$ and $Q_1\subset Q$ satisfying the following:
\begin{enumerate}[(i)]
\item\label{item:new subset:containing} for any $x\in\{x_j\}_{j=1}^I$, there exists $x'\in Q_1$ so that $\pi ( B(x;\rho(x)))\subset\mc B(x'; r_1/4)$ or $\pi(B(x;\rho(x)))\subset \mc B(p; r_1/4)$;
\item\label{item:new subset:disjoint each other} $\mc B(x';4 r_1)\cap \mc B(x'';4 r_1)=\emptyset$ for two different points $x',x''\in Q_1$;
\item\label{item:new subset:disjoint p} $\mc B(x';4 r_1)\cap\mc B(p;4 r_1)=\emptyset$ for all $x'\in Q_1$.
\end{enumerate}
\end{claim}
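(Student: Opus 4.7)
The plan is to run a greedy iterative clustering. I would start with the tentative choice $Q_1^{(0)}=Q$ at the smallest admissible scale $r^{(0)}=8s_1$, and whenever (ii) or (iii) fails I would absorb one offending point into another representative and advance to the next dyadic scale $r^{(k+1)}=64\,r^{(k)}$. The dyadic ratio $64=4^3$ is chosen just large enough that condition (i) is preserved automatically through the triangle inequality, yet small enough that after at most $I$ such absorptions the final scale still lies inside $[8s_1,4^{3I+3}s_1]$.

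Concretely, I would carry along an auxiliary assignment $A^{(k)}\colon\{x_1,\dots,x_I\}\to Q_1^{(k)}\cup\{p\}$ and the invariant
\[
\dist_{\Sigma}\!\bigl(\pi(x_j),A^{(k)}(x_j)\bigr)+\rho(x_j)\leq r^{(k)}/4\qquad\text{for every }j,
\]
which, together with the near-isometric character of the projection $\pi$ on the small ball $B(x_j;\rho(x_j))$, directly delivers (i). Initially $A^{(0)}(x_j)=\pi(x_j)$ makes the invariant hold since $\rho(x_j)\leq s_1=r^{(0)}/8$. At stage $k$, I would test (ii)--(iii): if every $x'\in Q_1^{(k)}$ satisfies $\dist(x',y')>8r^{(k)}$ for all other $y'\in Q_1^{(k)}\cup\{p\}$, I terminate with $Q_1=Q_1^{(k)}$ and $r_1=r^{(k)}$; otherwise I pick an offending pair $(x',y')$ with $x'\in Q_1^{(k)}$, remove $x'$, and reassign every $x_j$ with $A^{(k)}(x_j)=x'$ to the new target $y'$. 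The triangle inequality gives
\[
\dist(\pi(x_j),y')+\rho(x_j)\leq r^{(k)}/4+8r^{(k)}=\tfrac{33}{4}\,r^{(k)}\leq 16\,r^{(k)}=r^{(k+1)}/4,
\]
so the invariant survives at the enlarged scale $r^{(k+1)}=64\,r^{(k)}$.

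Each iteration strictly decreases $|Q_1^{(k)}|$, so the process halts after at most $|Q|\leq I$ steps, and the terminal scale satisfies $r_1\leq 8\cdot 64^I s_1=8\cdot 4^{3I}s_1\leq 4^{3I+3}s_1$; the lower bound $r_1\geq 8s_1$ is built into $r^{(0)}$. At termination, (ii) and (iii) hold by the stopping criterion, and (i) reads off from the invariant since
\[
\pi\bigl(B(x_j;\rho(x_j))\bigr)\subset\mc B\bigl(\pi(x_j);\rho(x_j)\bigr)\subset\mc B\bigl(A^{(k^\ast)}(x_j);r_1/4\bigr),
\]
with the centre either a member of $Q_1$ or the distinguished point $p$.

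The main obstacle is the delicate interplay between the dyadic jump factor and the triangle inequality: one needs $r^{(k+1)}\geq 33\,r^{(k)}$ to keep the invariant alive, yet cannot take much larger jumps without exceeding the allowed range after $I$ iterations. The choice $r^{(k+1)}/r^{(k)}=64$ comfortably accommodates both and, starting from $8s_1$, produces after $I$ iterations an $r_1$ of size at most $8\cdot 4^{3I}s_1$, which matches the exponent $3I+3$ appearing in the statement.
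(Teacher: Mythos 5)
Your proposal is correct and takes essentially the same greedy approach as the paper: start with $Q_1=Q$ and $r_1=8s_1$, remove one offending point and multiply the radius by $64$ whenever condition (ii) or (iii) fails, and argue by triangle inequality that condition (i) survives each enlargement, terminating after at most $I$ steps with $r_1\leq 8\cdot 64^I s_1\leq 4^{3I+3}s_1$. Your explicit assignment map $A^{(k)}$ and quantitative invariant just make the paper's terser bookkeeping transparent; the only minor caveat is that the inclusion $\pi(B(x_j;\rho(x_j)))\subset\mc B(\pi(x_j);\rho(x_j))$ should carry a factor of $2$ (the paper uses $\pi(B(x_j;r))\subset\mc B(\pi(x_j);2r)$ since $\pi$ need not be an exact isometry), but the slack in your constant $33/4\leq 16$ absorbs this without affecting the argument.
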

\begin{proof}[Proof of the Claim \ref{claim:suitable balls and radius}]
Note that for $k\rightarrow\infty$, $\max_{1\leq j\leq I}\dist_M(x_j,\Sigma)\rightarrow 0$. So without loss generality, we can assume that for any $r<1$
\[\pi (B(x_j;r))\subset\mc B(\pi(x_j); 2r).\]
Now let $Q_1= Q$ and $r_1=8s_1$. Then the first item follows immediately. If such $Q_1$ and $r_1$ satisfy all the requirements, then we are done. Otherwise, there exists $y\in Q_1$ so that either
\[\mc B(x';4r_1)\cap \mc B(y;4r_1)\neq \emptyset , \mc B(x';4r_1)\setminus\mc B(y;4r_1)\neq\emptyset \text{\ \ for some } x'\in Q_1, \]
or 
\[\mc B(y;4 r_1)\cap\mc B(p';4 r_1)\neq \emptyset.\]
In both cases, we replace $(Q_1,r_1)$ by $(Q_1\setminus\{y\},64r_1)$. 
Then $\mc B(y;2s_1)\subset\mc B(x';r_1/4)$. Hence (\ref{item:new subset:containing}) still holds true for our new $Q_1$ and $r_1$. 

As far, we have proved that if $Q_1$ and $r_1$ satisfy (\ref{item:new subset:containing}) but not the last two requirements, then we can replace $(Q_1,r_1)$ by $(Q_1\setminus\{y\},64r_1)$ for some $y\in Q_1$ so that the new $Q_1$ and $r_1$ also satisfy (\ref{item:new subset:containing}).

Note that each time we get the new $Q_1$ with fewer element. Thus, such a process  will stop in $N(\leq I)$ steps. Then those our desired $Q_1$ and $r_1$.
\end{proof}

Note that $v_t,u_k$ are positive minimal graph functions on $\Xi:=\mc A(p;r_1,\epsilon)\setminus \cup_{y\in Q_1}\mc B(y;r_1)$. Moreover,
\begin{claim}\label{claim:it is good pair}
If $r_1\geq \sqrt{h/\kappa}$, then $(v_t,u_k)$ is a $((C_1+1)|x|^2,2)$-pair (see Definition \ref{def:fK pair}) on $\Xi$.
\end{claim}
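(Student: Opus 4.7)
The strategy is to verify the three defining conditions of a $(f,K)$-pair one at a time for $(v_t, u_k)$ on $\Xi$, using the estimates already assembled in the proof. The gradient bound $|\nabla u_k| + |\nabla v_t| < 2$ is essentially immediate: by (\ref{eq:uk gradient bound}), $|\nabla u_k| \leq 1$ on $\pi(\Sigma_k') \supset \Xi$, and by (\ref{eq:vt gradient bound}), $|\nabla v_t| \leq 1$ on $\mc B(p;\epsilon) \supset \Xi$. Strictness can be arranged by shrinking $\epsilon$ (or by reading the constants $\leq 1$ in those displays as $<1$, which is what the blow-up proofs actually deliver).

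For the positivity $v_t - u_k > 0$, the key preliminary is to identify the graph $u_k$ on $\Xi$ with the bottom sheet $u_k^1$. Since $\Xi$ is disjoint from $\pi(\cup_j B(x_j; \rho(x_j)))$, which by Claim \ref{claim:I balls cover Sk} covers the set $S_k$ where any sheet of $\Sigma_k$ has steep slope relative to the normal direction of $\Sigma$, each of the sheets $u_k^1 < \cdots < u_k^m$ remains a disjoint minimal graph on $\Xi$. The component $\Sigma_k'$ was defined as the one containing the bottom sheet on $\partial \mc B(p; \epsilon)$, and since connected components of a disjoint union of graphs over a connected base are individual graphs, $u_k = u_k^1$ on $\Xi$. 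By definition $t = \max_{\partial \mc B(p;\epsilon)} u_k^m$, so $u_k^m \leq v_t$ on $\partial \mc B(p;\epsilon)$; applying the strong maximum principle to the two minimal graphs $u_k^m$ and $v_t$ on $\mc B(p;\epsilon)$ gives $u_k^m < v_t$ in the interior. Thus $u_k = u_k^1 \leq u_k^m < v_t$ throughout $\Xi$.

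For the pointwise size bound, we bound each summand. The hypothesis $r_1 \geq \sqrt{h/\kappa}$ combined with $|x| \geq r_1$ on $\Xi$ and the foliation estimate (\ref{eq:upper bound inequality for vt}) yields
\[
0 < v_t \leq h/\kappa \leq r_1^2 \leq |x|^2.
\]
For $u_k$, the containment $\Sigma_k' \subset M$ forces $u_k \geq u_{\partial M}$, which by (\ref{eq:second order boundary}) gives $u_k \geq -C_1|x|^2$; combined with the upper bound $u_k \leq v_t \leq |x|^2$ from the previous step (and using $C_1 > 1$), this gives $|u_k| \leq C_1|x|^2$. Adding yields $|u_k| + |v_t| \leq (C_1+1)|x|^2$, with strict inequality on the open set $\Xi$.

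The main technical point to watch is the identification $u_k = u_k^1$ on $\Xi$ in the positivity step, which rests on the covering property of Claim \ref{claim:I balls cover Sk} ensuring that all the sheet-gluing behavior of $\Sigma_k$ is swallowed by the removed balls. Everything else is a direct assembly of estimates already proved.
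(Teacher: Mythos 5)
Your proof is correct and follows the same approach as the paper: the paper invokes Claim~\ref{claim:second order graph on Sigmat} to get $v_t-u_k\le (C_1+1)|x|^2$ while you re-derive that estimate inline, and both rely on the foliation bound (\ref{eq:upper bound inequality for vt}), the boundary convexity estimate (\ref{eq:second order boundary}), and the gradient bounds (\ref{eq:uk gradient bound}), (\ref{eq:vt gradient bound}). Your write-up is in fact a bit more careful than the paper's: you verify the sum $|u_k|+|v_t|<(C_1+1)|x|^2$, which is literally what Definition~\ref{def:fK pair} requires (the paper only records the difference bound and leaves the trivial case split for $u_k>0$ vs.\ $u_k\le 0$ implicit), and you spell out the identification $u_k=u_k^1$ and the maximum-principle comparison with $v_t$, which the paper compresses into ``it follows that $v_t-u_k\ge 0$.''
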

\begin{proof}[Proof of Claim \ref{claim:it is good pair}]
Recall that $t=\max_{\partial \mc B(p;\epsilon)}u^m_k$ and $u_k$ is the graph function of $\Sigma_k'$. Then it follows that $v_t-u_k\geq 0$ on $\Xi$. If $r_1\geq \sqrt{h/\kappa}$, using Claim \ref{claim:second order graph on Sigmat}, then we have
\[v_t-u_k\leq (1+C_1)|x|^2, \text{\ \ for all \ } x\in\Xi.\]
Also, (\ref{eq:uk gradient bound}) and (\ref{eq:vt gradient bound}) gives that 
\[|\nabla u_k(x)|+|\nabla v_t(x)|\leq 2 \text{\ \ for all \ } x\in\Xi.\]
Therefore, $(v_t,u_k)$ is a $((C_1+1)|x|^2,2)$-pair.
\end{proof}

\vspace{1em}
To proceed the proof of Lemma \ref{lem:w is bounded}, we divide it into high-dimensional cases and three-dimensional cases.

{\bf Part I:} In this part, we address the high dimensional case: $4\leq (n+1)\leq 7$.

Let $R_0=R_0(M,\Sigma)$ and $C_0=C_0(M,\Sigma)$ be the constants in Theorem \ref{thm:one sided harnark for high dimension}. Then we can take $\epsilon$ small enough so that $\epsilon<R_0/8$. Then for $k$ large enough so that $r_1\leq 4^{3I+3}s_1\leq \epsilon/(4^{I+3}C_0)$. Now applying Theorem \ref{thm:one sided harnark for high dimension}, there exists $\wti r_1\leq C_04^{I+1}r_1$ so that 
\[\max_{\partial \mc B(p;\epsilon)}(v_t-u_{k})(x)\leq C\min_{\partial\mc  B(p;\wti r_1)}(v_t-u_k)(x),\]
and
\[\mc B(y;r_1)\cap \mc A(p;\wti r_1/2,2\wti r_1)=\emptyset, \text{ \ \ for all \ } y\in Q_1.\]

Now we construct $Q_j\subset Q$ inductively:
\begin{claim}\label{claim:construction of Qr}
Suppose that, for some $j\in\mb N$, $Q_j\subset Q$ and $\wti r_j>C_0\cdot 4^{10I+10}\sqrt{h/\kappa}$ satisfying
\begin{itemize}
\item $\mc B(y;r_j)\cap \mc A(p;\wti r_j/2,2\wti r_j)=\emptyset$, for all $y\in Q_j$;
\item $Q_{j}\neq \emptyset$ and $Q\cap \mc B(p;\wti r_{j})\neq \emptyset$.
\end{itemize} 
Then 
\begin{equation}\label{eq:decrease Qj}
\#(Q\cap \mc B(p;\wti r_{j}))\leq I-j;
\end{equation}
and  there exist a non-empty set $Q_{j+1}\subset Q\cap \mc B(p;\wti r_j)$ and $r_{j+1}<\wti r_j$ so that $(v_t,u_k)$ is a strong $((C_1+1)|x|^2,K)$-pair on $\mc A(p;r_{j+1},\wti r_j)\setminus\cup_{y\in Q_{j+1}}\mc B(y;r_{j+1})$. 
\end{claim}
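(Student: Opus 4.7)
The plan is to mimic, at a finer scale inside $\mc B(p;\wti r_j)$, the clustering procedure of Claim \ref{claim:suitable balls and radius}. The hypothesis $\mc B(y;r_j)\cap\mc A(p;\wti r_j/2,2\wti r_j)=\emptyset$ for $y\in Q_j$ says that each obstacle ball sits either deep inside $\mc B(p;\wti r_j/2)$ or well outside $\mc B(p;2\wti r_j)$, so only the points of $Q\cap\mc B(p;\wti r_j)$ matter for the next iteration.

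First I would prove the counting bound $\#(Q\cap\mc B(p;\wti r_j))\leq I-j$ by induction on $j$: at each prior step the scale jump forcing $\wti r_j$ arises from the strict removal of at least one element of $Q$ from the current candidate set, and $\#Q\leq I$. Next, for the construction of $Q_{j+1}$ and $r_{j+1}$, set $Q^{(0)}:=Q\cap\mc B(p;\wti r_j)$ and $r_{j+1}^{(0)}:=8\max_{\pi(x_i)\in Q^{(0)}}\rho(x_i)$, and then run the absorption step of Claim \ref{claim:suitable balls and radius}: whenever for some $y\in Q^{(m)}$ the ball $\mc B(y;4r_{j+1})$ meets another $\mc B(y';4r_{j+1})$ without being contained, or meets $\mc B(p;4r_{j+1})$, replace $(Q^{(m)},r_{j+1})$ by $(Q^{(m)}\setminus\{y\},64\,r_{j+1})$. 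The procedure strictly decreases $\#Q^{(m)}$ and therefore terminates in at most $I-j$ steps, with a final scale $r_{j+1}\leq 4^{3(I-j)+3}\,r_{j+1}^{(0)}$.

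To verify the strong-pair property and non-emptiness, the hypothesis $\wti r_j>C_0\cdot 4^{10I+10}\sqrt{h/\kappa}$ is used to guarantee both $r_{j+1}<\wti r_j$ and $r_{j+1}\geq \sqrt{h/\kappa}$, and to ensure that at least the outermost element of $Q^{(0)}$ (the one farthest from $p$) survives the absorption, so that $Q_{j+1}\neq\emptyset$. With these bounds in hand, Claim \ref{claim:it is good pair} shows that $(v_t,u_k)$ is a $((C_1+1)|x|^2,2)$-pair on $\mc A(p;r_{j+1},\wti r_j)\setminus\bigcup_{y\in Q_{j+1}}\mc B(y;r_{j+1})$. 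The bound (\ref{eq:inequality for vt}) then supplies $|\nabla v_t|+|\nabla^2 v_t|\leq K|v_t|$, and the estimate $|v_t|\leq h/\kappa\leq |x|$ on $\{|x|\geq\sqrt{h/\kappa}\}$ yields $K|v_t|\leq K^2|x|$, upgrading the pair to a strong $((C_1+1)|x|^2,K)$-pair.

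The hardest part is the scale bookkeeping: at each iteration one must simultaneously guarantee non-emptiness of $Q_{j+1}$, the strict inequality $r_{j+1}<\wti r_j$, the lower bound $r_{j+1}\geq\sqrt{h/\kappa}$ needed for the strong-pair property, and a strict decrease in the count bounding $\#(Q\cap\mc B(p;\wti r_j))$. The generous factor $4^{10I+10}$ in the hypothesis on $\wti r_j$ is designed precisely to accommodate up to $I$ further iterations of this construction without the inner and outer scales collapsing into one another.
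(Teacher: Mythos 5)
Your general strategy (rerun the clustering of Claim \ref{claim:suitable balls and radius} inside $\mc B(p;\wti r_j)$, then upgrade to a strong pair via Claim \ref{claim:it is good pair} and (\ref{eq:inequality for vt})--(\ref{eq:upper bound inequality for vt})) matches the paper, but two steps are not carried out correctly.

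First, you set $r_{j+1}^{(0)}=8\max_{\pi(x_i)\in Q^{(0)}}\rho(x_i)$ and assert that the hypothesis on $\wti r_j$ then ``guarantees'' $r_{j+1}\geq \sqrt{h/\kappa}$. This does not follow: if every $x_i\in Q^{(0)}$ satisfies $|\pi(x_i)|^2\lesssim h/\kappa$, then $\max\rho(x_i)\approx 2Lh/\kappa$, so your $r_{j+1}$ is comparable to $h/\kappa$, far below $\sqrt{h/\kappa}$ when $h/\kappa\ll 1$. The lower bound on $\wti r_j$ controls the \emph{outer} radius, not $r_{j+1}$. The paper avoids this by defining $s_{j+1}=\max\bigl(\max_{x\in Q\cap\mc B(p;\wti r_j)}\rho(x),\,\sqrt{h/\kappa}\bigr)$, building the floor into the definition; this floor is exactly what lets Claim \ref{claim:it is good pair} apply (that claim needs $r_1\geq\sqrt{h/\kappa}$). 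Without the explicit floor, your strong-pair verification breaks down in precisely the regime where all remaining obstacle balls sit near $p$.

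Second, your proof of $\#(Q\cap\mc B(p;\wti r_j))\leq I-j$ is stated only as intuition (``the scale jump arises from removal of at least one element''). The paper's actual mechanism is quantitative: since $\wti r_j\leq C_0\cdot 4^{4I+4}\cdot\rho(z)$ for the point $z\in Q_{j-1}$ realizing the maximum, the hypothesis $\wti r_j>C_0\cdot 4^{10I+10}\sqrt{h/\kappa}$ together with a smallness choice of $\epsilon$ (to make $\sqrt{h/\kappa}\geq 2Lh/\kappa$) forces $h/\kappa\leq|\pi(z)|^2$, and then $\wti r_j\leq 2C_0L\cdot 4^{4I+4}|\pi(z)|^2\leq|\pi(z)|$. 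So $z$ lies \emph{outside} $\mc B(p;\wti r_j)$ yet inside $\mc B(p;\wti r_{j-1})$, which is what strictly decreases the count. Your write-up skips this chain and in particular never uses the specific constant $4^{10I+10}$ for anything concrete. Relatedly, your claim that the hypothesis ``ensures that at least the outermost element of $Q^{(0)}$ survives the absorption'' has no justification: nothing in the absorption process preserves the outermost point, and this is not how the paper obtains non-emptiness.
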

\begin{proof}
Once we have $\wti r_j$ and $Q_j$, then set \[s_{j+1}=\max\,(\,\max_{x\in Q\cap\mc B(p;\wti r_j)}\rho(x_j),\sqrt{h/\kappa}).\]
By the same process with Claim \ref{claim:suitable balls and radius}, we can take $r_{j+1}\in [8s_{j+1},4^{3I+3}s_{j+1}]$ and $Q_{j+1}\subset Q\cap \mc B(p;\wti r_j)$ satisfying the following:
\begin{itemize}
	\item for any $x\in Q\cap\mc B(p;\wti r_j)$, there exists $x'\in Q_{j+1}$ so that $\pi(B(x;\rho(x)))\subset \mc B(x';r_{j+1}/4)$ or $\pi(B(x;\rho(x)))\subset \mc B(p;r_{j+1}/4)$;
	\item $\mc B(x';4r_{j+1})\cap\mc B(x'';4r_{j+1})=\emptyset$ for two different points $x',x''\in Q_{j+1}$;
	\item $\mc B(x';4r_{j+1})\cap \mc B(p;4r_{j+1})=\emptyset$ for all $x'\in Q_{j+1}$.
\end{itemize}
We now check such $Q_{j+1}$ satisfies our requirements. Recall that $u_k$ is well-defined on 
\[\mc B(p;\epsilon)\setminus\bigcup_{y\in Q}\pi(B(y;\rho(y))).\]
Note that for any $x\in Q\cap\mc B(p;\wti r_j)$, there exists $x'\in Q_{j+1}$ so that $\pi(B(x;\rho(x)))\subset \mc B(x';r_{j+1}/4)$ or $\pi(B(x;\rho(x)))\subset \mc B(p;r_{j+1}/4)$. Together with $r_{j+1}\geq s_{j+1}\geq \sqrt{h/\kappa}$, by Claim \ref{claim:it is good pair}, $(v_t,u_k)$ is a $((C_1+1)|x|^2,2)$-pair on $\mc A(p;r_{j+1},\wti r_j)\setminus\bigcup_{y\in Q_{j+1}}\mc B(y;r_{j+1})$. Together with (\ref{eq:inequality for vt}) and (\ref{eq:upper bound inequality for vt}), we conclude that $(v_t,u_k)$ is a strong $((C_1+1)|x|^2,K)$-pair.

It remains to prove (\ref{eq:decrease Qj}). By the definition of $\wti r_{j}$, there exist $z\in Q_{j-1}$ ($Q_0:=Q$) so that  
\begin{equation}\label{eq:bound of wti r}
\wti r_{j}\leq C_0\cdot 4^{4I+4}\cdot L(|\pi (z)|^2+h/\kappa).
\end{equation}
Recall that $\wti r_j>C_0\cdot 4^{10I+10}\sqrt{h/\kappa}$. This deduces that
\[L(|\pi(z)|^2+h/\kappa)\geq \sqrt{h/\kappa}.\]
Note that $L=L(C_1,C_2)$ depends only on $M$ and $\Sigma$ (see Claim \ref{claim:I balls cover Sk}. Thus, we can take $\epsilon$ small enough so that 
\[\sqrt{h/\kappa}\geq 2L\cdot h/\kappa,\]
which implies that 
\[h/\kappa\leq |\pi(z)|^2.\]
Then (\ref{eq:bound of wti r}) becomes
\[\wti r_j\leq 2C_0L\cdot 4^{4I+4}|\pi(z)|^2\leq |\pi(z)|.\]
We conclude that 
\[\#(Q\cap\mc B(p;\wti r_j))\leq \#(Q\cap \mc B(p;\wti r_{j-1}))-1, \text{  where }\wti r_0=\epsilon.\]
By induction, (\ref{eq:decrease Qj}) follows.
\end{proof}

\vspace{1em}
By Claim \ref{claim:construction of Qr}, $(v_t,u_k)$ is a $((C_1+1)|x|^2,2)$-pair on $\mc A(p;r_{j+1},\wti r_j)\setminus\cup_{y\in Q_{j+1}}\mc B(y;r_{j+1})$. Then applying Theorem \ref{thm:one sided harnark for high dimension} again, there exists $\wti r_{j+1}\leq C_04^{I+1}r_{j+1}$ so that 
\begin{equation}\label{eq:far with wti rj}
\mc B(y;r_{j+1})\cap \mc A(p;\wti r_{j+1}/2,2\wti r_{j+1})=\emptyset, \text{ \ \ for all \ } y\in Q_1,
\end{equation}
and
\[\max_{\partial\mc B(p;\wti r_{j})}(v_t-u_{k})(x)\leq C\min_{\partial\mc B(p;\wti r_{j+1})}(v_t-u_{k})(x),\]
which also implies that 
\[\max_{\partial\mc B(p;\epsilon )}(v_t-u_{k})(x)\leq C^{j+1}\min_{\partial \mc B(p;\wti r_{j+1})}(v_t-u_{k})(x).\]

By (\ref{eq:decrease Qj}), such processes must stop in $N(\leq I)$ steps. Using Claim \ref{claim:construction of Qr}, together with (\ref{eq:far with wti rj}) and $Q_{j}\neq \emptyset$ for all $j\leq N$, we conclude that
\begin{itemize}
\item \underline{either} $\wti r_{N}\leq C_0\cdot 4^{10I+10}\sqrt{h/\kappa}$;
\item \underline{or} $Q\cap\mc B(p;\wti r_N)=\emptyset$.
\end{itemize}

In the first case, then for $x\in \mc B(p;\wti r_{N})$
\[(v_t-u_k)(x)\leq 2C_1\wti r^2_N\leq Ch/\kappa, \]
which implies
\begin{equation}\label{eq:final inequality}
\max_{\partial\mc B(p;\epsilon )}(v_t-u_{k})(x)\leq C^{N}\min_{\partial \mc B(p;\wti r_{N})}(v_t-u_{k})(x)\leq Ch/\kappa.
\end{equation}

In the second case, $(v_t,u_k)$ is a $((C_1+1)|x|^2,2)$-pair on $\mc A(p;\sqrt{h/\kappa},2\wti r_N)$, and hence a strong $(((C_1+1)|x|^2,K)$-pair by (\ref{eq:inequality for vt}) and (\ref{eq:upper bound inequality for vt}). Using Theorem \ref{thm:one sided harnark for high dimension} again,
\[\max_{x\in\partial\mc B(p;\wti r_N)}(v_t-u_k)(x)\leq\min_{x\in\partial\mc B(p;\sqrt{h/\kappa})}(v_t-u_k)(x)\leq  C\cdot C_1h/\kappa, \]
which also implies (\ref{eq:final inequality}).

Recall that the constant $C$ in (\ref{eq:final inequality}) depends only on $M,\Sigma, I,C_1$. Thus, we can take $\kappa$ larger than such $C$, then (\ref{eq:final inequality}) contradicts
\[h\leq \max_{\partial\mc B(p;\epsilon )}(v_t-u_{k})(x).\]

Therefore, we complete the proof for $4\leq (n+1)\leq 7$.

\vspace{2em}
{\bf Part II:} In this part, we address the three-dimensional case.

Let $Q_1$ and $r_1$ be the notion in Claim \ref{claim:suitable balls and radius}. Assume that $Q_1=\{y_j\}$. Recall that $s_1=\max_{y\in Q}\rho(y)$.

\begin{claim}\label{claim:suitable r1}
Suppose that $Q_1\neq \emptyset$ and $s_1\geq 2Lh/\kappa$. There exist $t_1,\wti t_1$ satisfy 
\begin{itemize}
\item $2t_1\leq \max_{y\in Q_1}|y|<\wti t_1/2$ ;
\item $\wti t_1\leq 2^{10I+1}t_1$ and $\wti t_1\geq \sqrt{\epsilon\cdot r_1}$;
\item $\mc B(y;r_1)\subset \mc B(p;\wti t_1)$ for all $y\in Q_1$.
\end{itemize}  
\end{claim}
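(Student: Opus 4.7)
The plan is to construct $t_1,\wti t_1$ explicitly rather than via a pigeonhole over scales. Writing $M_1 := \max_{y\in Q_1}|y|$, I would set
\[ t_1 := M_1/2, \qquad \wti t_1 := \max\{4M_1,\,\sqrt{\epsilon\, r_1}\}, \]
and then verify the three bullets in turn.

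The first bullet is immediate: $2t_1 = M_1$ and $\wti t_1 \geq 4M_1$ give $2t_1 \leq M_1 < \wti t_1/2$. For the third bullet, Claim \ref{claim:suitable balls and radius}(iii) forces $\dist_\mc N(y,p)\geq 8r_1$ for every $y\in Q_1$, whence $M_1 \geq 8r_1$; then $|y|+r_1 \leq (9/8)M_1 < 2M_1 \leq \wti t_1$, so $\mc B(y;r_1)\subset\mc B(p;\wti t_1)$ by the triangle inequality. The lower bound $\wti t_1\geq \sqrt{\epsilon\, r_1}$ in the second bullet is built into the definition of $\wti t_1$.

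The nontrivial content is the ratio bound $\wti t_1 \leq 2^{10I+1}t_1$, which reduces to showing $\sqrt{\epsilon\, r_1}\leq 2^{10I}M_1$, i.e.\ $r_1\leq 2^{20I}\epsilon^{-1}M_1^2$. To establish this I would let $y^* \in Q$ realize $s_1$, so $s_1 = L(|y^*|^2+h/\kappa)$, and apply Claim \ref{claim:suitable balls and radius}(i) to $y^*$: the set $\pi(B(y^*;\rho(y^*)))$ sits either in $\mc B(p;r_1/4)$ or in $\mc B(y';r_1/4)$ for some $y'\in Q_1$, so in either case $|y^*|\leq M_1+r_1/4 < 2M_1$ (using $r_1 < M_1/8$ from the previous step). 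The hypothesis $s_1\geq 2Lh/\kappa$ translates to $h/\kappa\leq |y^*|^2$, giving $s_1\leq 2L|y^*|^2\leq 8LM_1^2$, and hence $r_1\leq 4^{3I+3}s_1 \leq 4^{3I+4}L\cdot M_1^2$. Taking $\epsilon$ small enough depending only on $L$ and $I$ (consistent with the earlier shrinking of $\epsilon$ in the proof of Lemma \ref{lem:w is bounded}) closes out $\sqrt{\epsilon\, r_1}\leq 2^{10I}M_1$.

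The main obstacle is precisely that last step: without the hypothesis $s_1\geq 2Lh/\kappa$ one only gets the weaker estimate $r_1\leq C(M_1^2+h/\kappa)$, and the ratio cap could fail badly when $h/\kappa$ dominates $M_1^2$. The standing hypothesis is exactly what absorbs the $h/\kappa$ term into the $|y^*|^2\leq M_1^2$ term, yielding the crucial quadratic bound $r_1 \leq C_I M_1^2$ on which everything else rests.
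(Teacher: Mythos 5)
Your proof is correct. The first and third bullets are handled exactly as you say (for the third, item (iii) of Claim~\ref{claim:suitable balls and radius} gives $|y|>8r_1$, and then $|y|+r_1<2M_1\le\wti t_1$). For the ratio bound, your reduction to $r_1\lesssim M_1^2$ is the crux, and your use of the standing hypothesis $s_1\ge 2Lh/\kappa$ to absorb the $h/\kappa$ term into $|y^*|^2$ is precisely the mechanism the paper uses (the minor numerical slip $4^{3I+3}\cdot 8L=2\cdot 4^{3I+4}L$ is harmless). One small notational point: Claim~\ref{claim:suitable balls and radius}(i) is stated for $x\in\{x_j\}\subset\Sigma_k$, not for points of $Q=\pi(\{x_j\})$, so strictly you should apply it to $x_{j^*}$ with $\pi(x_{j^*})=y^*$ and then note $y^*\in\pi(B^M(x_{j^*};\rho(x_{j^*})))$; this is what you clearly mean.

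Where your route differs from the paper: the paper does not write $t_1,\wti t_1$ explicitly but instead invokes Lemma~\ref{lem:number lemma} to produce a dyadic multi-scale decomposition $\{k_j\}_{j=1}^{2N}$ of $Q_1$, and then reads off $t_1=2^{k_{2N-1}}r_1$, $\wti t_1=2^{k_{2N}}r_1$ from the outermost pair. For this particular claim that machinery is overkill: only the coarsest scale matters, and your explicit choice $t_1=M_1/2$, $\wti t_1=\max\{4M_1,\sqrt{\epsilon r_1}\}$ isolates exactly what is needed. (Lemma~\ref{lem:number lemma} earns its keep elsewhere in the paper, e.g.\ in the proofs of Theorem~\ref{thm:one sided harnark for high dimension} and Proposition~\ref{prop:at least a large residue}, where the full decomposition into multiple annuli is used; here only the last pair is used.) Your $\epsilon$-constraint $\epsilon\lesssim 2^{14I}/L$ is actually weaker than the paper's implicit $\epsilon\lesssim 2^{-6I}/L$, so it is certainly compatible with the existing shrinking of $\epsilon$ in the proof of Lemma~\ref{lem:w is bounded}, which depends only on $M,\Sigma,I$ and not on $\kappa$.
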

\begin{proof}[Proof of Claim \ref{claim:suitable r1}]
Let $\alpha_j=\log_2(|y_j|/r_1)$. Then by the Lemma \ref{lem:number lemma}, we can find $2N(\leq 2I)$ non-negative intergers $\{k_j\}_{j=1}^{2N}$ such that 
\begin{itemize}
	\item $k_{j+1}-k_{j}\geq 3$;
	\item $k_{2j}-k_{2j-1}\leq 10I+1$.
	\item $\{\alpha_j\}_{j=1}^I\subset \cup_{j}[k_{2j-1}+1,k_{2j}-1]$.
\end{itemize}
The last item is equivalent to say
\[Q_1\subset \bigcup_{j}\mc A\mc (p;2^{k_{2j-1}+1}r_1,2^{k_{2j}-1}r_1).\]
which implies that 
\[\bigcup_{y\in Q_1}\mc B(y;r_1)\subset \mc B(p;2^{k_{2N}}r_1).\] 
Now we set $t_1:=2^{k_{2N-1}}r_1$ and $\wti t_1:=2^{k_{2N}}r_1$. Then it follows that such $t_1$ and $\wti t_1$ satisfy the first and the third items.

It remains to prove that $\wti r_1\geq \sqrt{\epsilon\cdot r_1}$. Note that $s_1\geq 2Lh/\kappa$. Then there exists $z\in Q$ so that 
\[\rho(z)=L(|z|^2+h/\kappa)\geq 2Lh/\kappa,\]
which deduces that $|z|\geq \sqrt{h/\kappa}$. Recall that $r_1\leq 4^{3I+3}s_1$. Thus, we have
\[
\sqrt{\epsilon\cdot r_1}\leq \sqrt{\epsilon\cdot s_1}\cdot 2^{3I+3}\leq \sqrt{2\epsilon L|z|^2}\cdot 2^{3I+3}\leq  |z|\leq 2^{k_{2N}}r_1=\wti t_1.
\]
Thus, we have proved Claim \ref{claim:suitable r1}.
\end{proof}

Now we can construct $s_j,Q_j,t_j,\wti t_j$ inductively. Suppose we have $t_j$, then set
\[s_{j+1}=\max_{x\in Q\cap \mc B(p;t_j)}\rho(x).\]
By the same process with Claim \ref{claim:suitable balls and radius}, we can take $r_{j+1}\in[8s_{j+1},4^{3I+3}s_{j+1}]$ and $Q_{j+1}\subset Q\cap \mc B(p;t_j)$ so that 
\begin{itemize}
\item for any $x\in Q\cap \mc B(p;t_j)$, there exists $x'\in Q_{j+1}$ so that $\pi(B(x;\rho(x)))\subset \mc B(x; r_{j+1}/4)$ or $\pi(B(x;\rho(x)))\subset \mc B(p; r_{j+1}/4)$;
\item $\mc B(x';4 r_{j+1})\cap\mc B(x'';4r_{j+1})=\emptyset$ for two different $x',x''\in Q_{j+1}$;
\item $\mc B(x';4 r_{j+1})\cap\mc B(p;4 r_{j+1})=\emptyset$ for all $x'\in Q_{j+1}$.
\end{itemize}

\begin{claim}\label{claim:inductive Qtj}
Suppose that $Q_{j+1}\neq \emptyset$ and $s_{j+1}\geq 2Lh/\kappa$. Then there exist $t_{j+1}$ and $\wti t_{j+1}$ satisfy
\begin{itemize}
\item $2t_{j+1}\leq \max_{y\in Q_{j+1}}|y|<\wti t_{j+1}/2$ ;
\item $\wti t_{j+1}\leq 2^{10I+1}t_{j+1}$ and $\wti t_{j+1}\geq \sqrt{\epsilon\cdot r_{j+1}}$;
\item $\mc B(y;r_{j+1})\subset \mc B(p;\wti t_{j+1})$ for all $y\in Q_{j+1}$.
\end{itemize} 
\end{claim}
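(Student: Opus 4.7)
This claim is the exact analog of Claim \ref{claim:suitable r1} one level down: $Q_{j+1}\subset Q\cap \mc B(p;t_j)$ now plays the role of $Q_1\subset Q$. My plan is to repeat the three-step structure of that base case, with only one substantive check along the way.

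First I would enumerate $Q_{j+1}=\{y_i\}$ and set $\alpha_i=\log_2(|y_i|/r_{j+1})$. Since $\#Q_{j+1}\leq I$ and, by the disjointness property built into the construction of $Q_{j+1}$, each $|y_i|\geq 8r_{j+1}$, the $\alpha_i$ form a finite family of integers bounded below. Applying Lemma \ref{lem:number lemma} then yields integers $k_1<k_2<\cdots<k_{2N}$ (with $2N\leq 2I$) satisfying $k_{i+1}-k_i\geq 3$, $k_{2i}-k_{2i-1}\leq 10I+1$, and $\{\alpha_i\}\subset \bigcup_i[k_{2i-1}+1,k_{2i}-1]$. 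The natural choice is then
\[t_{j+1}:=2^{k_{2N-1}}r_{j+1},\qquad \wti t_{j+1}:=2^{k_{2N}}r_{j+1}.\]
The first bullet point and the ratio bound $\wti t_{j+1}\leq 2^{10I+1}t_{j+1}$ are immediate from the properties of the $k_i$; the third bullet point follows because each $|y_i|\leq 2^{k_{2N}-1}r_{j+1}=\wti t_{j+1}/2$ while $r_{j+1}\leq t_{j+1}\leq \wti t_{j+1}$.

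The only substantive step, and the one I expect to be the main (mild) obstacle, is the lower bound $\wti t_{j+1}\geq \sqrt{\epsilon\, r_{j+1}}$; for this I would copy the mechanism at the end of the proof of Claim \ref{claim:suitable r1}. The hypothesis $s_{j+1}\geq 2Lh/\kappa$ produces a point $z\in Q\cap \mc B(p;t_j)$ realizing $\rho(z)=s_{j+1}$, so $|z|\geq\sqrt{h/\kappa}$ and $\rho(z)\leq 2L|z|^2$; combined with $r_{j+1}\leq 4^{3I+3}s_{j+1}$, and choosing $\epsilon$ small enough (independent of $j$, $k$, $\kappa$) so that $2^{3I+3}\sqrt{2\epsilon L}\leq 1$, one obtains
\[\sqrt{\epsilon\, r_{j+1}}\leq 2^{3I+3}\sqrt{2\epsilon L}\,|z|\leq |z|.\]
I would then finish by locating this $z$ inside $\mc B(p;\wti t_{j+1})$: by the analog of Claim \ref{claim:suitable balls and radius} used to define $Q_{j+1}$, either $\pi(B(z;\rho(z)))\subset \mc B(p;r_{j+1}/4)$, in which case $|z|\leq r_{j+1}/4\leq \wti t_{j+1}$ trivially, or $\pi(B(z;\rho(z)))\subset \mc B(x';r_{j+1}/4)$ for some $x'\in Q_{j+1}$, in which case $|z|\leq |x'|+r_{j+1}/4\leq \wti t_{j+1}/2+\wti t_{j+1}/4\leq \wti t_{j+1}$ by the first and third bullet points already verified.

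The point to keep in mind throughout is that all the implicit constants, in particular $L$ from Claim \ref{claim:I balls cover Sk} and the combinatorial constants coming from Lemma \ref{lem:number lemma}, depend only on $M$, $\Sigma$, $C_1$ and $I$; this is what makes the choice of $\epsilon$ uniform in $j$, and hence makes the whole induction (which is iterated downstream in Part II to produce a contradiction with the lower bound on $h$) consistent. With this uniformity in place the claim follows with exactly the structure of Claim \ref{claim:suitable r1}.
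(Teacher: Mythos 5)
Your proof follows the paper's argument essentially verbatim: the same logarithmic coordinates $\alpha_i=\log_2(|y_i|/r_{j+1})$, the same application of Lemma \ref{lem:number lemma}, the same choices $t_{j+1}=2^{k_{2N-1}}r_{j+1}$ and $\wti t_{j+1}=2^{k_{2N}}r_{j+1}$, and the same use of $s_{j+1}\geq 2Lh/\kappa$ to produce $z\in Q\cap\mc B(p;t_j)$ with $|z|\geq\sqrt{h/\kappa}$ and $\rho(z)\leq 2L|z|^2$. You actually supply a justification the paper leaves implicit: the final link $|z|\leq\wti t_{j+1}$ in its displayed chain is not automatic (since $z$ ranges over $Q\cap\mc B(p;t_j)$, not over $Q_{j+1}$), and your case split via the containment property defining $Q_{j+1}$ is exactly what closes it. One small slip of wording: the $\alpha_i$ are positive reals $\geq 1$, not integers; only the $k_m$ produced by Lemma \ref{lem:number lemma} are integers.
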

\begin{proof}[Proof of Claim \ref{claim:inductive Qtj}]
The proof is almost the same with that of Claim \ref{claim:suitable r1}.

Let $\beta_j=\log_2(|y_j|/r_{j+1})$. Then by the Lemma \ref{lem:number lemma}, we can find $2N(\leq 2I)$ non-negative intergers $\{k_m\}_{m=1}^{2N}$ such that 
\begin{itemize}
	\item $k_{m+1}-k_{m}\geq 3$;
	\item $k_{2m}-k_{2m-1}\leq 10I+1$.
	\item $\{\alpha_m\}_{m=1}^I\subset \cup_{m}[k_{2m-1}+1,k_{2m}-1]$.
\end{itemize}
The last item is equivalent to say
\[Q_{j+1}\subset \bigcup_{m}\mc B(p;2^{k_{2m-1}+1}r_{j+1},2^{k_{2m}-1}r_{j+1}).\]
which implies that 
\[\bigcup_{y\in Q_{j+1}}\mc B(y;r_{j+1})\subset \mc B(p;2^{k_{2N}}r_{j+1}).\] 
Now we set $t_{j+1}:=2^{k_{2N-1}}r_{j+1}$ and $\wti t_{j+1}:=2^{k_{2N}}r_{j+1}$. Then it follows that such $t_{j+1}$ and $\wti t_{j+1}$ satisfy the first and the third items.

It remains to prove that $\wti r_{j+1}\geq \sqrt{\epsilon\cdot r_{j+1}}$. Note that $s_{j+1}\geq 2Lh/\kappa$. Then there exists $z\in Q\cap \mc B(p;t_j)$ so that 
\[\rho(z)=L(|z|^2+h/\kappa)\geq 2Lh/\kappa,\]
which deduces that $|z|\geq \sqrt{h/\kappa}$. Recall that $r_{j+1}\leq 4^{3I+3}s_{j+1}$. Thus, we have
\[
\sqrt{\epsilon\cdot r_{j+1}}\leq \sqrt{\epsilon\cdot s_{j+1}}\cdot 2^{3I+3}\leq \sqrt{2\epsilon L|z|^2}\cdot 2^{3I+3}\leq  |z|\leq 2^{k_{2N}}r_{j+1}=\wti t_{j+1}.
\]
Thus, we have proved Claim \ref{claim:inductive Qtj}.
\end{proof}

According to the construction, $Q\cap\mc A(p;t_{j+1},t_{j})\neq \emptyset$. Thus, such inductive process must stop in $N+1(\leq I)$ steps, that is, there $Q_{N+1}=\emptyset$ or $s_{N+1}\leq 2Lh/\kappa$.
\begin{claim}\label{claim:3d good pair}
For $j\leq N$, $(v_t,u_k)$ is a strong $(2C_1(|x|^2+r_{j+1}),K)$-pair on 
\[\mc A(p;r_{j+1},2t_j)\setminus \bigcup_{Q_{j+1}}\mc B(y;r_{j+1}).\]
\end{claim}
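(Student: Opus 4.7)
The strong $(2C_1(|x|^2+r_{j+1}),K)$-pair property amounts to the four conditions of Definition~\ref{def:fK pair}: positivity $v_t>u_k$, the joint height bound $|u_k|+|v_t|<2C_1(|x|^2+r_{j+1})$, the joint gradient bound $|\nabla u_k|+|\nabla v_t|<2$, and the Hessian control $|\nabla v_t|+|\nabla^2 v_t|\leq K|v_t|\leq K^2|x|$. The plan is to dispatch each of these using ingredients already assembled in this section, with the only genuinely new computation being the one that forces the extra $r_{j+1}$ term into the height bound (specific to the three-dimensional regime).

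First I would verify that $u_k$ is well-defined on the annular domain $\mc A(p;r_{j+1},2t_j)\setminus \bigcup_{Q_{j+1}}\mc B(y;r_{j+1})$. By the inductive construction of $(Q_{j+1},r_{j+1})$, every projected hole $\pi(B(x;\rho(x)))$ with $x\in Q\cap \mc B(p;t_j)$ is absorbed either into some $\mc B(x';r_{j+1}/4)$ with $x'\in Q_{j+1}$ or into $\mc B(p;r_{j+1}/4)$, so nothing obstructs $u_k$ on the stated domain. Positivity $v_t-u_k>0$ then follows by the maximum-principle argument already used in Claim~\ref{claim:it is good pair}: on $\partial\mc B(p;\epsilon)$ we have $u_k\leq u_k^m\leq t=v_t|_{\partial\mc B(p;\epsilon)}$ since $\Sigma_k'$ is the component containing the bottom sheet, and $v_t-u_k>0$ propagates inward since both functions satisfy the minimal graph equation.

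The key estimate is the height bound. Combining (\ref{eq:upper bound inequality for vt}), which gives $v_t\leq h/\kappa$, with the boundary barrier (\ref{eq:second order boundary}) applied through $u_k\geq u_{\partial M}\geq -C_1|x|^2$ (using $\Sigma_k'\subset M$), yields
\[
|u_k|+|v_t|\leq C_1|x|^2+h/\kappa.
\]
By construction $r_{j+1}\geq 8 s_{j+1}$ with $s_{j+1}=\max_{x\in Q\cap\mc B(p;t_j)}\rho(x)\geq Lh/\kappa$, because $\rho(x)=L(|x|^2+h/\kappa)\geq Lh/\kappa$ and $Q\cap\mc B(p;t_j)\neq\emptyset$ for every index $j\leq N$ on which the inductive procedure is still running. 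Since $L$ was fixed large in Claim~\ref{claim:I balls cover Sk}, this gives $h/\kappa\leq r_{j+1}/(8L)\leq C_1 r_{j+1}$, and hence
\[
|u_k|+|v_t|\leq C_1|x|^2+C_1 r_{j+1}\leq 2C_1(|x|^2+r_{j+1}).
\]

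The remaining items are then essentially routine: $|\nabla u_k|+|\nabla v_t|\leq 2$ follows from (\ref{eq:uk gradient bound}) and (\ref{eq:vt gradient bound}); the inequality $|\nabla v_t|+|\nabla^2 v_t|\leq K|v_t|$ is exactly (\ref{eq:inequality for vt}); and $K|v_t|\leq K^2|x|$ reduces to $|v_t|\leq K|x|$, which follows from $v_t\leq h/\kappa\leq r_{j+1}/(8L)\leq |x|$ on the annular domain $|x|\geq r_{j+1}$. The main bookkeeping obstacle I anticipate is ensuring that $Q\cap\mc B(p;t_j)\neq\emptyset$ at every index $j\leq N$ covered by the claim, so that the chain $h/\kappa\lesssim r_{j+1}$ stays valid; this should follow directly from the stopping criterion of the inductive procedure, which only terminates when $Q_{N+1}=\emptyset$ or $s_{N+1}\leq 2Lh/\kappa$, i.e.\ strictly after step $j=N$.
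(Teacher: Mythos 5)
Your proof is correct and takes essentially the same approach as the paper's brief argument, which splits into $|x|\le\sqrt{h/\kappa}$ versus $|x|\ge\sqrt{h/\kappa}$ and cites (\ref{eq:inequality for vt}), (\ref{eq:upper bound inequality for vt}), and Claim \ref{claim:it is good pair}. You additionally make explicit the key fact $h/\kappa\lesssim r_{j+1}$ (via $r_{j+1}\ge 8s_{j+1}$ and $\rho(x)\ge Lh/\kappa$, using that $Q\cap\mc B(p;t_j)\neq\emptyset$ while the induction runs), which the paper leaves implicit in asserting $h/\kappa+C_1h/\kappa\le 2C_1r_{j+1}$.
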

\begin{proof}[Proof of Claim \ref{claim:3d good pair}]
For $x\in\mc B(p;\sqrt{h/\kappa})$, we have 
\[(v_t-u_k)(x)\leq h/\kappa+C_1h/\kappa\leq 2C_1r_{j+1};\]
and if $|x|\geq \sqrt{h/\kappa}$, 
\[(v_t-u_k)(x)\leq h/\kappa+C_1|x|^2\leq 2C_1|x|^2.\]
Then together with (\ref{eq:inequality for vt})(\ref{eq:upper bound inequality for vt}) and Claim \ref{claim:it is good pair}, the desired result follows.
\end{proof}

By the definition of $\wti r_{j+1}$, for $y\in Q_{j+1}$,
\[\mc B(y;r_{j+1})\subset\mc B(p;\wti t_{j+1}).\]
Also, from Claim \ref{claim:inductive Qtj}, 
\[\sqrt{t_j\cdot r_{j+1}}\leq\sqrt{\epsilon\cdot r_{j+1}}\leq \wti t_{j+1}.\]
Thus, applying Theorem \ref{thm:harnack for minimal graphs on minimal surfaces}, we have
\[\max_{\partial \mc B(p;t_j)}(v_t-u_{k})(x)\leq C\min_{\partial\mc B(p;\wti t_{j+1})}(v_t-u_{k})(x).\]
Note that $\wti t_{j}\leq 2^{10I+1}t_{j}$. Together with Corollary \ref{cor:harnack for equivalent radius}, we have 
\[\max_{\partial\mc B(p;\wti t_j)}(v_t-u_{k})(x)\leq C\min_{\partial \mc B(p;t_{j})}(v_t-u_{k})(x).\]
From these two inequalities, we conclude that
\begin{equation}\label{eq:from epsilon to N}
\max_{\partial\mc B(p;\epsilon)}(v_t-u_{k})(x)\leq C^{2N}\min_{\partial\mc B(p;t_{N})}(v_t-u_{k})(x).
\end{equation}

Without loss of generality, we assume that $t_{N}\geq \sqrt{h/\kappa}$. Recall that $Q_{N+1}=\emptyset$ or $s_{N+1}\leq 2Lh/\kappa$.

\underline{If $Q_{N+1}=\emptyset$}, then it follows that $Q\cap\mc B(p;t_{N})=\emptyset$ (where $t_0=\epsilon$). In this case, $(v_t,u_k)$ is a strong $(2C_1(|x|^2+h/\kappa),K)$-pair on $\mc A(p;h/\kappa,2t_N)$. Using Theorem \ref{thm:harnack for minimal graphs on minimal surfaces}, we have
\[\max_{\partial\mc B(p;t_N)}(v_t-u_{k})(x)\leq C\min_{\partial\mc B(p;\sqrt{h/\kappa})}(v_t-u_{k})(x)\leq Ch/\kappa.\]
Together with (\ref{eq:from epsilon to N}), we conclude that
\begin{equation}\label{eq:3d final inequality}
\max_{\partial\mc B(p;\epsilon)}(v_t-u_{k})(x)\leq C^{2N+1}h/\kappa.
\end{equation}

\underline{If $s_{N+1}\leq 2Lh/\kappa$}, then for $x\in Q\cap\mc B(p;t_N)$,
\[\rho(x)=L(|x|^2+h/\kappa)\leq 2Lh/\kappa,\]
which implies that $|x|\leq \sqrt{h/\kappa}$. In this case, for $x\in\mc A(p;r_{N+1}+h/\kappa,2t_N)\setminus\cup_{Q_{N+1}}\mc B(y;r_{N+1}+h/\kappa)$,
\[(v_t-u_k)(x)\leq C_1|x|^2+h/\kappa\leq 2C_1(|x|^2+r_{N+1}+h/\kappa).\]
Therefore, $(v_t,u_k)$ is also a $(2C_1(|x|^2+r_{N+1}+h/\kappa),2)$-pair on $\mc A(p;r_{N+1}+h/\kappa,2t_N)\setminus\cup_{Q_{N+1}}\mc B(y;r_{N+1}+h/\kappa)$, and hence a strong $(2C_1(|x|^2+r_{N+1}+h/\kappa),K)$-pair by (\ref{eq:inequality for vt}) and (\ref{eq:upper bound inequality for vt}). Moreover,
\[\sqrt{t_N\cdot(r_{N+1}+h/\kappa)}\leq \sqrt{\epsilon\cdot 4^{3I+3}s_{N+1}}+\sqrt{\epsilon\cdot h/\kappa}\leq 2^{3I+5}L\sqrt{\epsilon\cdot h/\kappa}\leq \frac{1}{2}\sqrt{h/\kappa}.\]
Then Theorem \ref{thm:harnack for minimal graphs on minimal surfaces} gives that
\[\max_{\partial\mc B(p;t_N)}(v_t-u_{k})(x)\leq C\min_{\partial\mc B(p;2\sqrt{h/\kappa})}(v_t-u_{k})(x)\leq Ch/\kappa,\]
together with \ref{eq:from epsilon to N}, which also implies (\ref{eq:3d final inequality}).

Recall that the constant $C$ in (\ref{eq:3d final inequality}) depends only on $M,\Sigma, I,C_1$. Thus, we can take $\kappa$ larger than such $C^{2N+1}$, then (\ref{eq:3d final inequality}) contradicts
\[h\leq \max_{\partial\mc B(p;\epsilon )}(v_t-u_{k})(x).\]

Therefore, we complete the proof for $(n+1)=3$.

\section{High dimensional case}\label{sec:high dim}
In this section, we prove the one-sided Harnack inequality for minimal graph functions in high dimensional cases. Such a result is one of key ingredients in the proof of the existence of Jacobi field in Theorem \ref{thm:degenerate theorem}.

In this section, $(M^{n+1},g)$ is always a closed manifold and $\mc N$ is an embedded compact minimal hypersurface in $M$ so that $\mc B(p;1)\cap \partial\mc N=\emptyset$. Recall that $\mc B(p;r)$ the intrinsic geodesic ball of $\mc N$ with radius $r$ and center $p\in \mc N$ and $\mc A(p;r,s)=\mc B(p;s)\setminus\mc B(p;r)$.

In order to prove Theorem \ref{thm:one sided harnark for high dimension}, we prove the following lemma first:
\begin{lemma}\label{lem:maximum for annulus}
Given $C_1>0$, there exist $C=C(M,\mc N,C_1,K)$ and $R_0=R_0(M,\mc N,C_1,K)$ so that if $\Sigma$ and $\Gamma$ are minimal graphs with functions $v,u$ over $\mc A(p;r,2R)$ for $8r\leq R\leq R_0$  and $(v,u)$ is a strong $(C_1|x|^2,K)$-pair (see Definition \ref{def:fK pair}), where $|x|=\dist_\mc N(x,p)$, then we have
\begin{equation}\label{eq:one-sided harnack for high dim}
\max_{\partial\mc B(p;R)}(v-u)(x)\leq C\min_{\partial\mc B(p;2r)}(v-u)(x).
\end{equation}
\end{lemma}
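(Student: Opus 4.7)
Set $w = v - u > 0$. Since $(v, u)$ is a strong $(C_1 |x|^2, K)$-pair, the difference $w$ satisfies on $\mc A(p; r, 2R)$ a linear uniformly elliptic equation $\mathcal{L} w = 0$, where $\mathcal{L}$ is a controlled perturbation of the Jacobi operator on $\mc N$ whose lower-order coefficients and deviation from the Euclidean Laplacian are bounded in terms of $M$, $\mc N$ and $K$ and become negligible once $R \leq R_0$ is taken small enough. The strategy is to combine sphere-by-sphere Harnack control with a radial barrier built from the Euclidean fundamental solution $|x|^{2-n}$, whose decay in dimensions $n \geq 3$ is precisely what makes the argument work.

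\textbf{Step 1 (reduction to the radial direction).} For each $\rho \in [2r, R]$, cover $\partial \mc B(p; \rho)$ by $O(1)$ geodesic balls of radius $\rho/8$ on which Lemma~\ref{lem:der estimate} applies, and chain the resulting local Harnack estimates to obtain
\[
\max_{\partial \mc B(p; \rho)} w \leq C_2 \min_{\partial \mc B(p; \rho)} w, \qquad C_2 = C_2(M, \mc N, K).
\]
This is essentially the argument already used in Corollary~\ref{cor:harnack for equivalent radius}. Writing $\bar w(\rho)$ for the spherical average, it therefore suffices to show $\bar w(R) \leq C\, \bar w(2r)$.

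\textbf{Step 2 (barrier and optimization).} Introduce the ansatz $\psi(x) = a + b\, |x|^{2-n}$. A direct computation on $\mc N$ shows $\mathcal{L}(|x|^{2-n}) = O(|x|^{2-n})$, so after a lower-order correction and the smallness assumption $R \leq R_0$ one obtains a genuine supersolution $\widetilde\psi$ of $\mathcal{L}$ on $\mc A(p; 2r, 2R)$. Choose $a, b$ so that
\[
\widetilde\psi \geq C_2\, \bar w(2r) \text{ on } \partial \mc B(p; 2r), \qquad \widetilde\psi \geq 4 C_1 R^2 \text{ on } \partial \mc B(p; 2R),
\]
where the second inequality uses the strong-pair bound $w \leq C_1 |x|^2$. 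The maximum principle then gives $w \leq \widetilde\psi$ on the annulus, and hence $\max_{\partial \mc B(p; R)} w \leq \widetilde\psi(R)$. Optimizing in $(a, b)$ splits into two cases. If $C_2\, \bar w(2r) \geq 4 C_1 R^2$, take $b = 0$ to get $\widetilde\psi(R) = C_2\, \bar w(2r)$. Otherwise take both boundary conditions sharp, so that
\[
\widetilde\psi(R) = C_2\, \bar w(2r) + \bigl(4 C_1 R^2 - C_2\, \bar w(2r)\bigr) \cdot \frac{(2r)^{2-n} - R^{2-n}}{(2r)^{2-n} - (2R)^{2-n}};
\]
the hypothesis $R \geq 8r$ forces $(r/R)^{n-2} \leq 8^{2-n}$ and makes this ratio at most $1 - c_n$ for some $c_n = c_n(n) > 0$, while positivity of $w$ on the inner sphere $|x| = r$ caps the coefficient $b$, yielding $\widetilde\psi(R) \leq C_4(n)\, \bar w(2r)$. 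Combining with Step 1 gives \eqref{eq:one-sided harnack for high dim} with $C = C(M, \mc N, C_1, K)$.

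\textbf{Main obstacle.} The delicate step is Step 2: turning the exact Euclidean barrier $|x|^{2-n}$ into a genuine supersolution of the variable-coefficient operator $\mathcal{L}$ on the curved hypersurface $\mc N$. The precise form of the lower-order correction and the choice of $R_0$ are where the intrinsic geometry of $\mc N$, the ambient Ricci curvature, and the nonlinear correction from the minimal-graph equation (controlled by the strong-pair hypothesis $|\nabla v| + |\nabla^2 v| \leq K|v|$) all enter; this is why the smallness assumption $R \leq R_0$ appears in the statement. In the critical dimension $n = 2$ the corresponding fundamental solution $\log|x|$ does not decay, so this barrier argument fails outright; that is precisely why the three-dimensional case is handled separately in Section~\ref{sec:three dim} via the more elaborate iterative scheme sketched in the introduction.
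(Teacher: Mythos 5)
Your Step 1 (chaining sphere-by-sphere Harnack from Lemma~\ref{lem:der estimate} to reduce to spherical averages) is correct and matches how the paper uses Corollary~\ref{cor:harnack for equivalent radius}. But the barrier argument in Step 2 has a genuine gap that is not cosmetic.

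First, the ratio claim is false. As $r\to 0$ with $R$ fixed (which is allowed since the hypothesis $8r\leq R$ only bounds $r$ from above), both numerator and denominator in
\[
\frac{(2r)^{2-n}-R^{2-n}}{(2r)^{2-n}-(2R)^{2-n}}
\]
are dominated by $(2r)^{2-n}$, so the ratio tends to $1$, not to something $\leq 1-c_n$. Consequently, when $\bar w(2r)\ll R^2$, your $\widetilde\psi(R)$ is essentially $4C_1R^2$ and is not comparable to $\bar w(2r)$. Second, the remark that ``positivity of $w$ on the inner sphere $|x|=r$ caps the coefficient $b$'' has no mechanism behind it. In a supersolution comparison you only need $\widetilde\psi\geq w$ on $\partial\mc B(p;2r)\cup\partial\mc B(p;2R)$ and $\mathcal L\widetilde\psi\leq 0$ inside; the sphere $|x|=r$ lies outside the comparison annulus $\mc A(p;2r,2R)$ and imposes no constraint on $\widetilde\psi$. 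More structurally, a pure upper-barrier argument cannot prove a one-sided Harnack inequality of the form $\max_{\partial\mc B(p;R)}w\leq C\min_{\partial\mc B(p;2r)}w$: such an estimate says precisely that the outer boundary data cannot be large when the inner data is small, whereas the maximum principle treats the outer boundary data as a given input and simply interpolates it inward.

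What the paper actually does is quite different and is built around the \emph{flux}. Writing $\mc I(s)=s^{1-n}\int_{\partial\mc B(p;s)}w$, the divergence theorem produces $E(s)=s^{1-n}\bigl(\int_{\partial\mc B(p;s)}\langle\nu,\nabla w\rangle-\int_{\mc A(p;r,s)}\Delta w\bigr)=(cs^{2-n})'$ with $c$ an honest constant determined by the solution (the inner-sphere flux), and one shows $|\mc I'(s)-E(s)|\lesssim s\mc I(s)$ after controlling $\Delta w$ via Lemma~\ref{lem:minimal graph inequality} and Lemma~\ref{lem:der estimate}. The point is that $c$ is a conserved quantity encoded in the solution itself, not prescribed by the outer boundary values; it is this quantity that transmits smallness from $\partial\mc B(p;2r)$ to $\partial\mc B(p;R)$. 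The two-case ODE analysis of Lemma~\ref{lem:ode} ($c\leq 0$ versus $c>0$), combined with the dichotomy of Claim~\ref{claim:decrease or increase} (either $\mc I(R)\leq\alpha\mc I(2r)$ outright, or the monotonicity hypothesis needed for Corollary~\ref{cor:small scal for I} holds), yields $\mc I(R)\leq C\mc I(2r)$, and Step~1 upgrades this to the pointwise statement. Your heuristic about $|x|^{2-n}$ being the right barrier in dimension $n\geq 3$ is pointing at the same object (the fundamental solution controlling the flux), but the comparison-principle packaging of it does not close; you need the ODE/monotonicity formulation to actually exploit the constancy of the flux.
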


Now we can use Lemma \ref{lem:maximum for annulus} to prove Theorem \ref{thm:one sided harnark for high dimension} as follows.

\begin{proof}[Proof of Theorem \ref{thm:one sided harnark for high dimension}]
Let $ C_0$ be the constant in Lemma \ref{lem:enlarge interior radius}. Then for any $r_1,r_2\geq C_0Ir$, there exists a $C^1$ curve $\gamma:[0,1]\rightarrow\mc A (p;r_1,r_2)$ satisfying
\begin{itemize}
\item $\gamma(0)\in\partial\mc B(p;r_1)$ and $\gamma(1)\in\partial\mc B(p;r_2)$;
\item $\mathrm{Length}(\gamma)\leq C_0 (r_2-r_1)$;
\item $\dist(\gamma,\cup_{j=1}^I\mc B(q_j,r))\geq r_1/(C_0I)$.
\end{itemize} 

Now take $\wti r\in(C_0Ir,3C_0Ir)$ so that 
\[\dist_\mc N(\partial\mc B(p;\wti r),\bigcup_{j=1}^I\mc B(q_j,r))\geq r.\]

Set 
\[\alpha_j:=\log_2(\dist_\mc N(p,q_j)/\wti r),  \ \ \text{ for all } j\leq I.\]
Then Lemma \ref{lem:number lemma} implies that there exist $2J(\leq 2I)$ non-negative intergers $\{k_j\}_{j=1}^{2J}$ such that 
\begin{itemize}
	\item $k_{j+1}-k_{j}\geq 3$;
	\item $k_{2j}-k_{2j-1}\leq 10I+1$;
	\item $\{\alpha_j\}_{i=1}^{I}\subset \cup_{j}[k_{2j-1}+1,k_{2j}-1]$.
\end{itemize}
The last item deduces that 
\[\bigcup_{j=1}^I\mc B(q_j,r)\subset\cup_{j=1}^{J}\mc B(p;2^{k_{2j-1}+1}\wti r,2^{k_{2j}-1}\wti r).\]
By the choice of $C_0$ in the beginning, Corollary \ref{cor:harnack for equivalent radius} gives that
\begin{equation}\label{eq:high dim:from odd to even}
\max_{\partial\mc B(p;2^{k_{2j}}\wti r)}(v-u)(x)\leq C\min_{\partial \mc B(p;2^{k_{2j-1}}\wti r)}(v-u)(x).
\end{equation}

Notice that there is no $B(q_j;r)$ in $B(p;2^{k_{2j}+1}\wti r,2^{k_{2j+1}-1}\wti r)$. Let $R_0$ satisfy the requirements in Lemma \ref{lem:maximum for annulus}, then 
\begin{equation}\label{eq:high dim:from even to odd}
	\max_{\partial\mc B(p;2^{k_{2j+1}}\wti r)}(v-u)(x)\leq C\min_{\partial\mc B(p;2^{k_{2j}}\wti r)}(v-u)(x).
\end{equation}
Similarly,  
\[\max_{\partial\mc B(p;R)}(v-u)(x)\leq C\min_{\partial\mc B(p;2^{k_{2J}}\wti r)}(v-u)(x).\]
Together with (\ref{eq:high dim:from odd to even}) and (\ref{eq:high dim:from even to odd}, we conclude that
\[\max_{\partial\mc B(p;R)}(v-u)(x)\leq C^{2J+1}\min_{\partial\mc B(p;2^{k_{2N}}\wti r)}(v-u)(x),\]
which is exactly the desired result.
\end{proof}

Then the rest of this section is devoted to prove Lemma \ref{lem:maximum for annulus}.

\subsection{Almost harmonic functions}
In this subsection, we prove an one-sided Harnack for a class of `almost harmonic functions' under additional assumptions.

Let $\mc N^n$ be a compact manifold with boundary of dimension $n\geq 2$. Denote by $\mc B(p;r)$ the geodesic ball of $\mc N$ so that $\mathrm{inj_{\mc N}(p)},\dist_\Sigma(p,\partial\mc N)>r$, where $\mathrm{inj_{\mc N}(p)}$ is the injective radius. We also denote $\mc A(p;r,s)=\mc B(p;s)\setminus\mc B(p;r)$. 

Given a $C^2$ function $w$ on $\partial\mc B(p;R)$, define
\begin{equation}\label{eq:def of Is}
\mc I(s)=s^{1-n}\int_{\partial\mc B(p;s)}w\,d\mu,
\end{equation}
for $s\in (0,R]$.

\begin{lemma}\label{lem:ode}
Let $n\geq 3$. Given $C_2>0$, there exists $R=R(n,C_2)>0$ so that if $\mc I$ is a $C^1$ function on $[r,R]$ with $R\geq4r$ and
\begin{equation}\label{eq:ode inequality}
|(\mc I(s)+cs^{2-n})'|\leq C_2s\mc I(s), \text{ for some constant } c,
\end{equation}
then $\max_{s\in[1,R]}\mc I(s)\leq 6\mc I(2r)$. 
\end{lemma}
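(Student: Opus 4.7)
The plan is to recast the hypothesis via the auxiliary function $F(s):=\mc{I}(s)+cs^{2-n}$, under which it reads $|F'(s)|\leq C_2 s\,\mc{I}(s)$. Thus $F$ is ``almost constant'' and $\mc{I}$ is close to a harmonic-type profile $A-cs^{2-n}$. Throughout I will use the positivity $\mc{I}(s)\geq 0$, which holds in the intended applications, where $\mc{I}$ is a spherical average of the positive minimal-graph difference $v-u$. The proof then splits according to the sign of the undetermined constant $c$.

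For the case $c\geq 0$, the bound $\mc{I}\leq F$ upgrades the hypothesis to $|F'(s)|\leq C_2 s\, F(s)$, so Gronwall applied in both directions (using $F\geq 0$) gives
\[
F(s)/F(t)\leq e^{C_2|s^2-t^2|/2}\quad\text{for all }s,t\in[r,R].
\]
Combining $cr^{2-n}\leq F(r)$ (from $\mc{I}(r)\geq 0$) with $F(r)\leq F(2r)\,e^{C_2 R^2/2}$ (using $3r^2/2\leq R^2/2$) and the identity $F(2r)=\mc{I}(2r)+c(2r)^{2-n}=\mc{I}(2r)+2^{2-n}\,cr^{2-n}$, I would rearrange to
\[
F(2r)\bigl(1-2^{2-n}e^{C_2 R^2/2}\bigr)\leq \mc{I}(2r).
\]
Choosing $R=R(n,C_2)$ so small that $2^{2-n}e^{C_2 R^2/2}\leq 3/4$ (possible since $2^{2-n}\leq 1/2$ for $n\geq 3$), this yields $F(2r)\leq 4\,\mc{I}(2r)$ and hence, for every $s\in[2r,R]$,
\[
\mc{I}(s)\leq F(s)\leq F(2r)e^{C_2 R^2/2}\leq 6\,\mc{I}(2r).
\]

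For the case $c<0$, the signs reverse: the algebraic identity
\[
\mc{I}(s)-\mc{I}(2r)=(F(s)-F(2r))+c\bigl((2r)^{2-n}-s^{2-n}\bigr)
\]
has nonpositive last term for $s\geq 2r$, so $\mc{I}(s)-\mc{I}(2r)\leq |F(s)-F(2r)|\leq \tfrac12 C_2 M_2 R^2$, where $M_2:=\sup_{[2r,R]}\mc{I}$ and the final bound comes from integrating the hypothesis with $\mc I\leq M_2$. Taking the supremum in $s$ gives $M_2(1-\tfrac12 C_2 R^2)\leq \mc{I}(2r)$, which for $R$ small yields $M_2\leq 2\,\mc{I}(2r)\leq 6\,\mc{I}(2r)$. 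The main obstacle throughout is the lack of a priori sign information on $c$: in the ``increasing harmonic'' branch $c>0$, a naive estimate would permit $\mc{I}$ to grow unboundedly between $2r$ and $R$, and it is precisely the positivity of $\mc{I}$ at the inner radius $r$ --- leveraged through the factor $1-2^{2-n}\geq 1/2$ arising from the scale ratio $2$ between $r$ and $2r$ --- that makes the absorption in the previous display possible and pins down the constant $6$.
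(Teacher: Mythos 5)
Your proposal is correct and follows essentially the same strategy as the paper's proof: split on the sign of $c$, apply a Gronwall estimate to $F(s)=\mc I(s)+cs^{2-n}$ (the paper's $J$), and in the $c>0$ branch absorb the $cr^{2-n}$ term using $\mc I(r)\geq 0$ together with the dyadic factor $2^{2-n}\leq 1/2$ available when $n\geq 3$. The paper's $c>0$ case compares $J(r)$ to $J(2r)$ by integrating the differential inequality over $[r,2r]$ rather than by the exponential Gronwall bound, and its $c\leq 0$ case runs a one-sided Gronwall directly on $\mc I$ instead of your maximum-function argument, but these are cosmetic differences; one minor bookkeeping point is that your last display also needs the (harmless) extra smallness condition $e^{C_2R^2/2}\leq 3/2$ to close the constant at $6$.
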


\begin{proof}
We divide the proof into two cases.

{\bf Case 1:} $c\leq 0$.

Then $\mc I'\leq C_2s\mc I$, which implies that $\mc I(s)\leq e^{C_2s^2}\mc I(2r)$.
It follows that $\mc I(s)\leq e\mc I(2r)\leq 6\mc I(2r)$ whenever $C_2R^2\leq 1$.

{\bf Case 2: $c>0$.}

Set $J(s)=\mc I(s)+cs^{2-n}$. Then we have $|J'|\leq  C_2sJ$. The argument in Case 1 gives that $J(s)\leq e^{1/10}J(r)\leq 2J(r)$ whenever $C_2R^2\leq 1/10$.

Integrating (\ref{eq:ode inequality}) from $r$ to $2r$, then
\[\mc I(r)+cr^{2-n}-\mc I(2r)-c(2r)^{2-n}\leq  4C_2r^2(\mc I(2r)+c(2r)^{2-n}),\]
which implies that $\mc I(r)+cr^{2-n} \leq 3\mc I(2r)$. Together with $J(s)\leq 2J(r)$, we have
\[\mc I(s)\leq J(s)\leq 2J(r)\leq 6\mc I(2r).\]
\end{proof}

\begin{corollary}\label{cor:small scal for I}
Let $n\geq 3$. Given $C_3>0$ and $\alpha>0$, there exists a constant $\epsilon_0=\epsilon_0(C_3,\alpha,\mc N)>0$ so that if $w\geq 0$ is a $C^2$ function on $\mc A(p;r,\epsilon) (4r\leq \epsilon<\epsilon_0)$ satisfying
\begin{enumerate}
\item $\mc I(s)\leq \alpha\mc I(t)$ for $r\leq s\leq t\leq \epsilon$, where $\mc I(s)=s^{1-n}\int_{\partial\mc B(p;s)} w\, d\mu$;
\item $|\Delta w|\leq C_3w$,
\end{enumerate} 
then $\mc I(s)\leq 6\mc I(2r)$ for $s\in[r,\epsilon]$.
\end{corollary}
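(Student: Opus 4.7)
The plan is to apply Lemma \ref{lem:ode} to $\mc I$ by establishing an inequality of the form $|(\mc I(s)+cs^{2-n})'|\leq C_2\, s\,\mc I(s)$ for a suitable constant $c$ (depending on $w$ and $r$) and some $C_2=C_2(C_3,\alpha,\mc N)$. Once such an inequality is proved, Lemma \ref{lem:ode} applied on $[r,\epsilon]$ with $\epsilon<\epsilon_0:=R(n,C_2)$ immediately yields the desired conclusion $\mc I(s)\leq 6\mc I(2r)$.

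First I would differentiate $\mc I(s)=s^{1-n}\int_{\partial \mc B(p;s)}w\,d\mu$ using the first variation of area formula
\[
\frac{d}{ds}\int_{\partial \mc B(p;s)}w\,d\mu = \int_{\partial \mc B(p;s)}\big(\partial_r w + Hw\big)\,d\mu,
\]
where $H$ is the mean curvature of the geodesic sphere $\partial \mc B(p;s)\subset \mc N$. The Riemannian expansion $H=(n-1)/s+O(s)$, with implicit constant controlled by the (bounded) geometry of $\mc N$, gives
\[
\mc I'(s)=s^{1-n}\int_{\partial \mc B(p;s)}\partial_r w\,d\mu + E(s),\qquad |E(s)|\leq C(\mc N)\, s\, \mc I(s).
\]
Next, the Riemannian divergence theorem on the annulus $\mc A(p;r,s)$ gives
\[
\int_{\partial \mc B(p;s)}\partial_r w-\int_{\partial \mc B(p;r)}\partial_r w = \int_{\mc A(p;r,s)}\Delta w,
\]
which suggests setting $c:=\tfrac{1}{n-2}\int_{\partial\mc B(p;r)}\partial_r w$ and $J(s):=\mc I(s)+cs^{2-n}$, so that the boundary term at $r$ exactly cancels $\frac{d}{ds}(cs^{2-n})=(2-n)cs^{1-n}$ and one obtains
\[
J'(s)=s^{1-n}\int_{\mc A(p;r,s)}\Delta w + E(s).
\]

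To bound the remaining integral, the pointwise hypothesis $|\Delta w|\leq C_3 w$ and the (exact) Riemannian coarea identity $\int_{\mc A(p;r,s)}f\,dV=\int_r^s\int_{\partial \mc B(p;\rho)}f\,d\mu\,d\rho$ combine with the monotonicity assumption $\mc I(\rho)\leq \alpha\,\mc I(s)$ for $r\leq \rho\leq s$ to yield
\[
\left|\int_{\mc A(p;r,s)}\Delta w\right|\leq C_3\int_r^s \rho^{n-1}\mc I(\rho)\,d\rho\leq \frac{C_3\alpha}{n}\, s^n\, \mc I(s).
\]
Putting everything together, $|J'(s)|\leq C_2\, s\, \mc I(s)$ with $C_2:=C_3\alpha/n+C(\mc N)$, and Lemma \ref{lem:ode} finishes the proof. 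The main technical point is tracking the Riemannian curvature correction in the expansion of $H$ (and in the volume comparison of geodesic spheres to Euclidean ones), which is routine since $\mc N$ has bounded geometry; the remainder of the argument parallels the classical mean-value computation for harmonic functions on Euclidean annuli.
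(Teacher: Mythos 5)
Your proposal is correct and follows essentially the same route as the paper: differentiate $\mc I$ via the first variation of area with the expansion $H=(n-1)/s+O(s)$, use the divergence theorem on $\mc A(p;r,s)$ to isolate a constant flux term $c\,s^{2-n}$, bound the remaining $\Delta w$ contribution via the coarea formula and the hypotheses, and then invoke Lemma~\ref{lem:ode}. The only cosmetic difference is that you choose $c=\tfrac{1}{n-2}\int_{\partial\mc B(p;r)}\partial_r w$ explicitly, whereas the paper defines an error term $E(s)$ and observes via the divergence theorem that $E(s)=(c\,s^{2-n})'$ for some constant $c$; the two formulations are identical.
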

\begin{proof}
For $s>t$, set 
\[E(s)=s^{1-n}\big(\int_{\partial\mc B(p;s)}\langle\nu,\nabla w\rangle-\int_{\mc A(p;r,s)}\Delta w\big),\]
where $\nu$ is the unit outward normal vector field of $\partial\mc B(p;s)$. Then by the divergence theorem, there exists a constant $c=c(p)$ such that $E(s)=(cs^{2-n})'$. And a direct computation gives that (cf. \cite{CM02}*{Lemma 2.1})
\begin{equation}\label{eq:derative of Is}
\mc I'(s)=s^{1-n}\int_{\partial\mc B(p;s)}\Big[\langle\nu,\nabla w\rangle+H(x)-\frac{n-1}{s}\Big],
\end{equation}
where $H(x)$ is the mean curvature of $\partial\mc B(p;s)$. Recall that 
\[H(x)=\frac{n-1}{s}+O(s).\]
Hence we have
\begin{align*}
|\mc I'-E|\leq&\ s^{1-n}\int_{\mc B(p;r,s)}|\Delta w|+s^{1-n}\int_{\partial\mc B(p;s)}w(x)\cdot\Big|H(x)-\frac{n-1}{s}\Big|\\
\leq&\ s^{1-n}\int_{r}^s\Big[\int_{\partial \mc B(p;t)}C_2w\,d\mu\Big]\,dt+ C_3s\mc I(s)\\
\leq&\ (C_2\alpha+C_3)s\mc I(s)
\end{align*}
Then by Lemma \ref{lem:ode}, there exists $\epsilon_0=\epsilon_0(n,C_2\alpha+C_3)$ so that if $\epsilon<\epsilon_0$, then $\mc I(s)\leq 6\mc I(2r)$ for $s\in[r,\epsilon]$. Note that $C_3$ depends only on $\mc N$. Then the desired result follows.
\end{proof}

\subsection{Minimal graph on annuli in high dimensions}
Recall that $(M^{n+1},\partial M,g)$ is a closed Riemannian manifold with $3\leq (n+1)\leq 7$ and $\mc N$ is an embedded minimal hypersurface. 

In this subsection, we give a proof of Lemma \ref{lem:maximum for annulus}. The main steps are to show that the difference of minimal graph functions on annulus are the `almost harmonic function' in Corollary \ref{cor:small scal for I}.

We now prove a one-sided Harnack inequality for minimal graph functions on annuli.

\begin{proof}[Proof of Lemma \ref{lem:maximum for annulus}]
By Lemma \ref{lem:minimal graph inequality}, we can take $R_0$ small enough so that 
\begin{align*}\label{eq:laplace for minimal funciton}
|\Delta_\mc Nw|\leq &\ C|\nabla^2w|(|\nabla w|^2+|\nabla v|^2)+C|\nabla^2w|(|\nabla v||\nabla^2v|+|\nabla w|)|v|+\\
&+C(1+|\nabla^2w|+|\nabla^2v|)w+C|\nabla w|\cdot|v|+C|\nabla w|\cdot|\nabla^2v|,
\end{align*}
where the constant $C$ depending only on $M,\mc N$ and $K$. Using (\ref{eq:inequality for vt}) and (\ref{eq:upper bound inequality for vt}), we can rewrite it as
\begin{equation}
|\Delta_\mc N w|\leq C|\nabla^2w|(|\nabla w|^2+|v|^2)+C(1+|\nabla^2w|)w+C|\nabla w|\cdot |v|.
\end{equation}
Recall the gradient and the second order estimates in Lemma \ref{lem:der estimate}, then we have for $x\in\mc B(p;\frac{3}{2}r,R)$,
\[|\nabla w|\leq C(M,\mc N,K,C_1)|w|/|x|, \ \ |\nabla^2w|\leq C(M,\mc N,K,C_1)|w|/|x|^2.\]
Together with $|v|\leq K|x|$ and $|w|\leq C_1|x|^2$, we have
\[|\Delta w|\leq C_4|w|,\]
for $x\in \mc A(p;\frac{3}{2}r,R)$.
\begin{claim}\label{claim:decrease or increase}
There exists a constant $\alpha=\alpha(C_4)$ so that \underline{either} $\mc I(R)\leq \alpha\mc I(2r)$; \underline{or} $\mc I(s)\leq \alpha\mc I(t)$ for all $2r\leq s\leq t\leq R$, where $\mc I(s)=s^{1-n}\int_{\partial\mc B(p;s)}w.$
\end{claim}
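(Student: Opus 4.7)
I plan to prove Claim \ref{claim:decrease or increase} by combining the sphere Harnack inequality of Lemma \ref{lem:der estimate} with a barrier argument based on White's local minimal foliation. After possibly shrinking $R_0$, fix a smooth family $\{v_t\}_{t \in [-\delta, \delta]}$ of minimal graphs over $\mc B(p; R_0)$ with $v_t = t$ on $\partial \mc B(p; R_0)$ and the foliation bound $|v_t(x)| \leq C_5 |t|$ on $\mc B(p; R_0/2)$, where $C_5 = C_5(M, \mc N)$. Since $|u|, |v| \leq C_1 R^2 \leq C_1 R_0^2$, for $R_0$ small both $u$ and $v$ can be sandwiched between foliation leaves with parameters in $[-\delta/2, \delta/2]$. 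Applying Lemma \ref{lem:der estimate} to $w = v - u$ on the annulus yields the sphere Harnack $\max_{\partial \mc B(p; s)} w \leq C \min_{\partial \mc B(p; s)} w$ for $s \in [2r, R]$, so $\mc I(s)$ is comparable, with constant depending only on $M, \mc N, K, C_1$, to the pointwise values of $w$ on $\partial \mc B(p; s)$. It therefore suffices to establish the dichotomy in terms of $\max_{\partial \mc B(p; s)} w$ for $s \in \{2r, R\}$.

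I next introduce the barrier parameters
\[
\overline t := \inf\{t : v_t \geq v \text{ on } \mc A(p; 2r, R)\}, \qquad \underline t := \sup\{t : v_t \leq u \text{ on } \mc A(p; 2r, R)\},
\]
so that $v_{\overline t}$ touches $v$ from above at some point $x_1$ and $v_{\underline t}$ touches $u$ from below at some $x_0$. By the strong maximum principle for the minimal graph equation, both $x_0, x_1$ lie on $\partial \mc B(p; 2r) \cup \partial \mc B(p; R)$. On the annulus we have $w \leq v_{\overline t} - v_{\underline t}$, and smoothness of the foliation in $t$ gives $v_{\overline t} - v_{\underline t} \leq C_5' (\overline t - \underline t)$ for a constant $C_5' = C_5'(M, \mc N)$. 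The touching identities $v(x_1) = v_{\overline t}(x_1)$ and $u(x_0) = v_{\underline t}(x_0)$, together with the foliation bound $|v_t| \leq C_5|t|$, control $\overline t - \underline t$ by a multiple of $\max(v(x_1), -u(x_0))$, which is bounded above by a constant times $w$ on the sphere(s) containing $x_0, x_1$ after applying the sphere Harnack.

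The dichotomy then emerges from a case analysis on the locations of $(x_0, x_1)$. When both touching points lie on $\partial \mc B(p; 2r)$, the argument yields $\max_{\mc A} w \leq C \min_{\partial \mc B(p; 2r)} w$, hence case (a); when both lie on $\partial \mc B(p; R)$, we obtain $\max_{\mc A} w \leq C \min_{\partial \mc B(p; R)} w$, hence case (b); the two mixed cases reduce to these by transporting the estimate along a curve in the annulus via Corollary \ref{cor:harnack for equivalent radius}. The main obstacle is to make sure the final constant $\alpha$ depends only on $M, \mc N, K, C_1$, independently of the individual sizes of $u$ and $v$ (which may be much larger than $w$); this is precisely why one works with the difference $v_{\overline t} - v_{\underline t}$ of two nearby leaves rather than bounding $v$ and $u$ in isolation. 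A secondary subtlety is that $v$ and $u$ may change sign on the annulus, so the signed comparison between $\overline t - \underline t$ and the boundary oscillation of $w$ must be carried out with care, separately treating the subcases where $\overline t, \underline t$ are of the same or opposite sign.
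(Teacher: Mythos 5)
Your proposal takes a genuinely different route from the paper. The paper proves Claim \ref{claim:decrease or increase} by applying the PDE maximum principle of Lemma \ref{lem:mp from pde} directly to $w=v-u$: having already established $|\Delta w|\leq C_4 w$ on the annulus (from Lemma \ref{lem:minimal graph inequality} and the strong-pair bounds), and with $R\leq R_0$ small so that $R^2|\Delta w|\leq w/4$, it obtains $\max_{\mc A}w\leq C\max_{\partial\mc A}w$ and $\min_{\mc A}w\geq c\min_{\partial\mc A}w$ outright, combines these with the sphere Harnack of Lemma \ref{lem:gradient estimates}, and deduces the dichotomy by a short case check on whether $\mc I(R)\geq C^3\mc I(2r)$. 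No minimal foliation is invoked. You instead propose to derive the interior maximum principle from White's foliation barrier. This is the approach used in \cite{Sha17} and in \cite{GZ18}*{Claim D}, but there it is applied to a single graph function $u$ rather than to the difference $w$, and that distinction is where your argument breaks.

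The gap is in the step ``control $\overline t-\underline t$ by a multiple of $\max(v(x_1),-u(x_0))$, which is bounded above by a constant times $w$.'' The second half of this is false: $v(x_1)$ and $-u(x_0)$ are values of $v$ and $u$ individually, and can be arbitrarily larger than $w=v-u$ (take $v=c+\varepsilon$, $u=c$ with $c\gg\varepsilon$). More structurally, the barrier construction forces $v_{\overline t}\geq v$ and $v_{\underline t}\leq u$ on the whole annulus, hence $v_{\overline t}-v_{\underline t}\geq w$ \emph{everywhere}. The touching at $x_1$ only pins down $v_{\overline t}-v$ (not $u-v_{\underline t}$), and the touching at $x_0$ only pins down $u-v_{\underline t}$ (not $v_{\overline t}-v$), so at neither point do you get $v_{\overline t}-v_{\underline t}$ close to $w$ unless $x_0=x_1$, which has no reason to hold. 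You therefore obtain only the useless lower bound $\overline t-\underline t\geq c\,w$ and not the needed upper bound $\overline t-\underline t\leq C\,w(\text{boundary point})$; the sign case-analysis you mention does not change this because the obstruction is the additive gap $u(x_1)-v_{\underline t}(x_1)$ (respectively $v_{\overline t}(x_0)-v(x_0)$), which is uncontrolled by $w$. I recommend following the paper's route: establish $|\Delta w|\leq C_4 w$ from Lemma \ref{lem:minimal graph inequality} together with the first and second order estimates of Lemma \ref{lem:der estimate}, then apply Lemma \ref{lem:mp from pde} to $w$ directly, where the maximum/minimum principle follows from the auxiliary functions $e^{cr}w$, $e^{br^2}w$ with $c,b\sim 1/R$ rather than from any barrier.
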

\begin{proof}[Proof of Claim \ref{claim:decrease or increase}]
By Lemma \ref{lem:gradient estimates}, there exists a constant $C$ so that 
\[\max_{\partial \mc B(p;2r)}w\leq C\min_{\partial\mc B(p;2r)}w,\ \ \max_{\partial \mc B(p;R)}w\leq C\min_{\partial\mc B(p;R)}w.\]
By Lemma \ref{lem:mp from pde}, we also have
\[\min_{\partial \mc A(p;2r,R)}w\leq C\min_{\mc A(p;2r,R)}w,\ \  \max_{ \mc A(p;2r,R)}w\leq C\max_{\partial \mc A(p;2r,R)}w.\]
From this, we have 
\begin{align*}
\mc I(s)\leq&\ \max_{\mc A(2r,R)}w\leq C(\max_{\partial\mc B(p;2r)}w+\max_{\partial \mc B(p;t)}w)\\
\leq&\ C^2(\mc I(2r)+\mc I(t)).
\end{align*}
Now if $\mc I(R)\geq C^3\mc I(2r)$, then 
\[\min_{\partial\mc B(p;R)}w\geq \mc I(R)/C\geq C^2\mc I(2r)> \min_{\partial\mc B(p;2r)}w,\]
which implies that
\[\min_{\partial \mc B(p;2r)}w=\min_{\partial A(p;2r,R)}w\leq C\min_{\mc A(p;2r,R)}w.\]
Therefore,
\begin{align*}
\mc I(s)&\leq C^2(\mc I(2r)+\mc I(t))\leq C^2(C\min_{\partial\mc B(p;2r)}w+\mc I(t))\\
&\leq C^2(C^2\min_{\mc A(p;2r,R)}w+\mc I(t)) \leq C^4(\mc I(t)+\mc I(t))\leq 2C^4I(t).
\end{align*}
Then the desired result follows by setting $\alpha=2C^4$.
\end{proof}

Let $\alpha$ be the constant in Claim \ref{claim:decrease or increase}. Then either $\mc I(R)\leq \alpha\mc I(3r)$, or $\mc I(s)\leq \alpha \mc I(t)$ for all $4r\leq s\leq t\leq R$. In the latter case, applying Corollary \ref{cor:harnack for equivalent radius}, we have $\mc I(R)\leq 6\mc I(3r)$. Hence we always have $\mc I(R)\leq \alpha \mc I(3r)$.

Now applying Corollary \ref{cor:harnack for equivalent radius}, 
\begin{gather*}
\max_{\partial \mc B(p;R)}w\leq C\min_{\partial\mc B(p;R)}w\leq C\mc I(R),\ \ \mc I(3r)\leq \max_{\partial\mc B(p;3r)}w\leq C\min_{\partial\mc B(p;2r)}w.
\end{gather*}
Then the desired inequality follows.

\end{proof}

\section{On the three dimensional case}\label{sec:three dim}

The main purpose of this section is to develop a one-sided Harnack inequality for minimal graph functions over a minimal surface (Theorem \ref{thm:harnack for minimal graphs on minimal surfaces}).

In this section, let $(M^3,g)$ be a three dimensional compact Riemannian manifold and $\mc N$ be an embedded minimal surface in $M$. For $q\in\mc N$ and $s>0$, denote by $\mc B(p;s)$ the geodesic ball in $\mc N$ such that $\dist_{\mc N}(p,\partial \mc N)>s$. We also denote $\mc A(p;r,s)=\mc B(p;s)\setminus\mc B(p;r)$.

\subsection{Positive functions satisfying the minimal condition}

We also approach Theorem \ref{thm:harnack for minimal graphs on minimal surfaces} by a Harnack inequality for positive functions satisfying additional assumptions.

Denote by $|x-y|$ the distance between $x,y\in\mc N$ and $d(x, A)$ the distance between $x\in \mc N$ and $A\subset \mc N$.

Let $w$ be a $C^1$ function on $\mc B(p;s,r)$. For $t\in[s,r]$, set
\begin{gather*}
 \mc I(p;t)=t^{-1}\int_{\partial\mc B(p;t)}w\,d\mu,\\
 \tau (p;t)=\int _{\partial\mc B(p;t)}\langle\nabla w,\nu\rangle\,d\mu,
 \end{gather*}
 where $\nu=\nu(p;t)$ is the unit outward normal vector field on $\partial\mc B(p;t)$.

\begin{definition}\label{def:minimal condition}
We say that a positive $C^2$ function defined on subset of0 $\mc A(p;r^2,\epsilon)\setminus\bigcup_{j=1}^I \mc B(q_j;r^2)\subset\mc N$ satisfies {\em minimal condition with constant $C$} if the following holds true:
\begin{enumerate}[(a)]
\item\label{minimal condition:quadratic increase} $w$ is positive and $w(x)\leq C( |x|^2+r^2)$ for all $x\in \Xi$, where $|x|=\dist_\mc N(x,p)$;
\item\label{minimal condition:gradient estimate}
\begin{gather*}
\max_{\Xi}|d(x,Q)|\cdot|\nabla\log w(x)|<C,\ \text{ where } Q=\{q_j\};
\end{gather*}
hence the Harnack inequality in Corollary \ref{cor:harnack for equivalent radius} holds true;
\item\label{minimal condition:laplacian eqaution} for all $x\in\Xi$,
\[|\Delta  w| \leq C w+\frac{C w^3}{d^4(x,Q)}.\]
\end{enumerate}
\end{definition}

In this part, we prove the one-sided Harnack inequality in three dimensional manifolds. Moreover, such a result holds true for more general functions on surfaces.
\begin{theorem}\label{thm:harnack with minimal condition}
Let $\mc N$ be a two-dimensional Riemannian surface and $\mc B(p;\epsilon)$ be a geodesic ball. Let $Q=\{q_j\}_{j=1}^I$ a subset of $\mc B(p;r^2,\epsilon r/2)$. Let $w$ be a $C^2$ function on $\Xi=\mc B(p;r^2,\epsilon^2)\setminus \bigcup_{j=1}^I\mc B(q_j;r^2)$ and $\epsilon>r^{1/4}$. Given $C_0>0$, there exists $\epsilon_0=\epsilon_0 (C_0,\mc N)$ and $C=C(C_0,\mc N)$ so that if $\epsilon<\epsilon_0$ and $w$ satisfies minimal condition with constant $C_0$ (see Definition \ref{def:minimal condition}), then we have 
 \[\max_{\partial\mc B(p;\epsilon^2/2)}w\leq C\min_{\partial\mc B(p;\epsilon r)}w.\]
\end{theorem}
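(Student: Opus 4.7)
The plan is to approach Theorem \ref{thm:harnack with minimal condition} via a flux-concentration argument, treating $w$ as an ``almost harmonic'' function on the two-dimensional punctured annulus $\Xi$ and iteratively extracting a dominant hole. First I introduce, for $q\in\mc N$ and $s>0$ with $\partial\mc B(q;s)\subset\Xi$, the quantities
\[
\mc I(q;s)=s^{-1}\int_{\partial\mc B(q;s)} w\,d\mu,\qquad \tau(q;s)=\int_{\partial\mc B(q;s)}\langle\nabla w,\nu\rangle\,d\mu.
\]
Applying the divergence theorem on an annulus $\mc A(q;s_1,s_2)$ that avoids $Q$, and exploiting the two-dimensionality of $\mc N$, a direct computation yields the approximate ODE $s\frac{d}{ds}\mc I(p;s)=\tau(p;s)+O(s\mc I(p;s))$; the error absorbs both the geodesic-circle Jacobian correction (small since $\epsilon<\epsilon_0$) and the bulk contribution from $|\Delta w|$, which by the minimal condition (c) is bounded by $C_0w+C_0w^3/d^4(\cdot,Q)$, in turn controlled via the gradient estimate (b).

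Integrating this ODE across the hole-free annulus $\mc A(p;\epsilon r,\epsilon^2/2)$ (using that $Q\subset\mc A(p;r^2,\epsilon r/2)$) gives the weak inequality
\[
\mc I(p;\epsilon^2/2)\leq 2\,\mc I(p;\epsilon r)+2\tau_0\,|\log(\epsilon/r)|,\qquad\text{where } \tau_0=\tau(p;\epsilon r).
\]
If $|\tau_0|\,|\log(\epsilon/r)|\lesssim\mc I(p;\epsilon r)$ then, combined with Corollary \ref{cor:harnack for equivalent radius} to convert $\mc I$ into pointwise max/min, the theorem follows at once. The remaining case is when $\tau_0$ dominates. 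I then apply the divergence theorem on $\mc B(p;\epsilon r)\setminus\bigcup_j\mc B(q_j;r^2)$ to write
\[
\tau_0\;\approx\;\sum_{j=1}^I\tau(q_j;r^2),
\]
and by pigeonhole extract $y_1\in Q$ with $|\tau(y_1;r^2)|\geq |\tau_0|/I$.

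The inductive step repeats this pigeonhole around $y_1$: using the ODE centered at $y_1$ on an annulus $\mc A(y_1;r^2,\theta_1|y_1-p|)$, either I already obtain a lower bound on $\mc I(y_1;\theta_1|y_1|)$, or a new dominant hole $y_2\in Q$ appears. At each stage I must choose $\theta_j$ from the narrow dyadic window $(4^{-4I-3},1/16)$ --- the window is wide enough that a pigeonhole on the finite collection of scales guarantees one $\theta_j$ for which the sphere $\partial\mc B(y_j;\theta_j|y_j-y_{j-1}|)$ stays uniformly away from the remaining holes, so that Corollary \ref{cor:harnack for equivalent radius} transfers $\mc I$ between centers. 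Iterating at most $I$ times produces a chain $p=y_0,y_1,\ldots,y_N\in Q$, $N\leq I$, and scales satisfying
\[
\mc I(p;\theta_j|y_j-y_{j-1}|)-c_0\,\mc I(y_{j+1};\theta_{j+1}|y_{j+1}-y_j|)\geq c_0\bigl[\log(\theta_j|y_j-y_{j-1}|)-\log(\theta_{j+1}|y_{j+1}-y_j|)-c_1\bigr].
\]
Summing these with weights chosen so that the $\mc I$ terms telescope and the $\log$ terms reassemble (a rearrangement I would isolate as the lemma advertised in Appendix \ref{sec:appendix:rearrange}) yields $\mc I(p;\sqrt\epsilon r)\geq c|\log(\sqrt\epsilon/r)|$. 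Feeding this back into the weak inequality and using the cap $w\leq C_0(|x|^2+r^2)$ from minimal condition (a) to control $\tau_0|\log|$ at the outer scale then completes the argument.

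The main obstacle will be the bookkeeping across the iterated zoom. Each recentering from $y_{j-1}$ to $y_j$ requires checking that the new circle avoids the other holes, that the flux contribution from those other holes does not swamp the dominant one, and that the $\int|\Delta w|$ error on the shrinking annulus stays subleading compared to the main flux term. The delicate point is the weighting in the telescoping sum: the scales $\theta_j|y_j-y_{j-1}|$ need not be geometrically ordered, so the coefficients have to be tuned so that rearranging the chain of flux inequalities actually produces the clean lower bound $c|\log(\sqrt\epsilon/r)|$ rather than a weaker constant. This is what makes the three-dimensional case genuinely harder than the higher-dimensional one handled in Section \ref{sec:high dim}, where the polynomial decay provided by $n\geq 3$ removes the need for a logarithmic telescoping.
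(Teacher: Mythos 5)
Your broad strategy matches the paper: introduce $\mc I$ and $\tau$, integrate the flux ODE across the hole-free outer annulus to get $\mc I(p;\epsilon^2)\leq 2\mc I(p;\epsilon r)+2\tau_0|\log(\epsilon/r)|$ (this is (\ref{eq:three dim:from R^2 to R})), split on whether $\tau_0|\log(\epsilon/r)|$ dominates $\mc I(p;\epsilon r)$, and in the dominant case build a chain of dominant holes $y_1,\ldots,y_N$ whose flux inequalities telescope to yield $\tau_0|\log(\epsilon/r)|\leq C\mc I(p;\epsilon r)$ (Lemma \ref{lem:estimate of taulogR}), then transfer $\mc I$ to max/min via Corollary \ref{cor:harnack for equivalent radius}. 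You also correctly identify the dyadic window $(4^{-4I-3},1/16)$ for the $\theta_j$ and the rearrangement bookkeeping.

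However, there is a concrete gap in the extraction of the first dominant hole. You write
$\tau_0\approx\sum_j\tau(q_j;r^2)$
via the divergence theorem on $\mc B(p;\epsilon r)\setminus\bigcup_j\mc B(q_j;r^2)$ and pigeonhole to find $y_1$ with $|\tau(y_1;r^2)|\geq|\tau_0|/I$. That approximation is not valid. The error is $\int|\Delta w|$ over the punctured ball, and minimal condition (c) gives $|\Delta w|\leq C_0 w+C_0 w^3/d^4(x,Q)$. When you integrate down to distance $r^2$ from a hole $q_j$, the term $w^3/d^4$ behaves like $(|q_j|^2+r^2)^3/d^4$ with $d\downarrow r^2$, and the resulting contribution is of order $(|q_j|^2+r^2)^3/r^4$ --- for $|q_j|$ near $r^2$ this is $\sim C_0^3 r^2$, which in general swamps $\tau_0$. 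There is no way to absorb this at scale $r^2$, so the naive pigeonhole on $\tau(q_j;r^2)$ collapses. This is exactly why the paper never works at scale $r^2$: Proposition \ref{prop:good subset and scale} first filters $Q$ to a subset $Q'$ and a common $\theta\in(4^{-4I-3},1/16)$ so that the balls $\mc B(y;\theta|y|)$, $y\in Q'$, are disjoint, absorb all holes, and stay away from $\mc B(p;\theta\epsilon^{3/4}r^{5/4})$; the flux is then taken at scale $\theta|y|$ (proportional to the distance from $p$), and the chain is truncated at the intermediate radius $\theta\epsilon^{3/4}r^{5/4}$. On the complement of those balls one has $d(x,Q)\gtrsim\epsilon^{3/4}r^{5/4}$, whence $w^3/d^4\lesssim w/(r\epsilon^3)$, and the error integrates to something of order $(r/\epsilon)\,\mc I$, which is subleading once $\epsilon<\epsilon_0$. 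The hypothesis $\epsilon>r^{1/4}$, which your write-up does not use, is precisely what makes the intermediate scale $\epsilon^{3/4}r^{5/4}$ sit strictly between $r^2$ and $\epsilon r$ and renders the whole scheme consistent. Your inductive step does gesture toward the correct scales $\theta_j|y_j-y_{j-1}|$, but without the filtered subset $Q'$, the intermediate cutoff scale, and the distance lower bound $d(x,Q)\gtrsim\epsilon^{3/4}r^{5/4}$ on the working domain, the error control that underlies Proposition \ref{prop:at least a large residue} and Proposition \ref{prop:good sequence of Q from R to R3/4} does not go through.
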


We postpone the proof in \S \ref{subsec:proof of harnack for 3 dim}.

\vspace{1em}

Here we give a proposition which is used in the next subsection.
\begin{proposition}\label{prop:good subset and scale}
Let $Q=\{q_j\}_{j=1}^I\subset\mc B(p;4r^2,\epsilon r)$. Suppose that $r<4^{-16I-16}$ and $\epsilon>4^4r^{1/4}$. Then there exist a constant $\theta\in[1/4^{4I+3},1/16)$ and subset $Q'\subset Q$ so that
\begin{enumerate}[(i)]
\item\label{item:containing condition} for any $q\in Q$, there exists $x\in Q'$ satisfying $\mc B(q;r^2)\subset \mc B(x;\theta|x|/4)$ or $\mc B(q;r^2)\subset\mc B(p;\theta \epsilon^{3/4}r^{5/4}/4)$;
\item $\mc B(x;4\theta|x|)\cap\mc B(y;4\theta|y|)=\emptyset$ for any $x,y\in Q'$ with $x\neq y$;
\item $\mc B(x;4\theta|x|)\cap \mc B(p;4\theta \epsilon^{3/4}r^{5/4})=\emptyset$ for any $x\in Q'$,
\end{enumerate} 
where $|x|=\dist(x,p)$.
\end{proposition}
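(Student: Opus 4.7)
My plan is a greedy merging procedure analogous to Claim~\ref{claim:suitable balls and radius}, but adapted to the fact that here the relevant ball radii $\theta|x|/4$ scale with $|x|$ rather than being uniform. I would build a nested sequence $Q_0 \supset Q_1 \supset \cdots$ of subsets of $Q$ together with scales $\theta_j = 4^{4j-4I-3}$, maintaining condition (i) of the proposition as an invariant.

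For the initialization, take $Q_0 = Q$ and $\theta_0 = 4^{-(4I+3)}$. The hypotheses $r<4^{-16I-16}$ and $\epsilon > 4^4 r^{1/4}$ yield $\theta_0^2 \geq 16\, r^{3/4}/\epsilon^{3/4}$, which means that the ``satellite threshold'' $4r^2/\theta_0$ is smaller than the ``central threshold'' $\theta_0\epsilon^{3/4}r^{5/4}/4 - r^2$. Consequently every $q \in Q$ satisfies either $|q| \geq 4r^2/\theta_0$ (so $\mc B(q;r^2) \subset \mc B(q;\theta_0|q|/4)$), or $|q| + r^2 \leq \theta_0\epsilon^{3/4}r^{5/4}/4$ (so $\mc B(q;r^2)$ fits in the central ball). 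Condition (i) is thus satisfied at $j=0$, though (ii) or (iii) may fail.

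For the inductive step, given $(Q_j, \theta_j)$ satisfying (i), either conditions (ii) and (iii) both hold (and we are done) or some conflict occurs. In the first type of conflict (two satellites overlap) I pick the element $y \in Q_j$ with the smaller norm $|y| \leq |x|$, and set $Q_{j+1}:=Q_j\setminus\{y\}$, $\theta_{j+1}:=4^4\theta_j$; since $|y-x| \leq 4\theta_j|x|+4\theta_j|y| \leq 8\theta_j|x|$ and the invariant yields $r^2\leq \theta_j|x|/4$, any previously $y$-absorbed ball $\mc B(q;r^2)$ satisfies
\[
 |q-x|+r^2 \leq |q-y|+|y-x|+r^2 \leq \tfrac{\theta_j|y|}{4} + 8\theta_j|x| \leq 9\theta_j|x| \leq 16\theta_j|x| = \tfrac{\theta_{j+1}|x|}{4},
\]
and is now $x$-absorbed. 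In the second type (a satellite meets the central ball) I remove $y$ similarly; from $|y|\leq 8\theta_j\epsilon^{3/4}r^{5/4}$ and the inequality $r^2\leq \theta_j\epsilon^{3/4}r^{5/4}$ (a consequence of the hypotheses on $r,\epsilon$), each $y$-absorbed $\mc B(q;r^2)$ fits into the enlarged central ball $\mc B(p;\theta_{j+1}\epsilon^{3/4}r^{5/4}/4)$ by the same arithmetic.

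The procedure terminates in at most $I$ steps since $|Q_j|$ strictly decreases, with the final $\theta$ at most $4^{-3}=1/64 < 1/16$ and clearly at least $4^{-(4I+3)}$, staying in the prescribed range. \textbf{The main subtlety} is precisely this bookkeeping for condition (i) across iterations: when $y$ is removed, not only its own ball $\mc B(y;r^2)$ but also every ball that $y$ had absorbed in an earlier step must be transferred to $x$ or to the center, and the multiplicative jump $4^4$ in $\theta$ at each step is chosen precisely so that the additive drift of order $\theta_j|x|$ (respectively $\theta_j\epsilon^{3/4}r^{5/4}$) at each transfer is comfortably absorbed into the new, larger radius. The quantitative choice of exponents $4I+3$ and $16I+16$ in the hypotheses are calibrated to accommodate this telescoping over the worst case of $I$ merges.
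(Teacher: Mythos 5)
Your argument is essentially the same greedy-merging scheme the paper uses: start with the full set $Q$ at the smallest admissible scale $\theta_0 = 4^{-(4I+3)}$, and whenever a disjointness condition fails, drop one element and inflate $\theta$ by the fixed factor $4^4$, stopping after at most $I$ removals so $\theta$ stays below $1/16$. The paper only asserts that after removing an offending point one can still satisfy the containing condition at the inflated scale; you spell out the drift estimate (the transferred balls $\mc B(q;r^2)$ land within $9\theta_j|x|$ of the surviving center and $64\theta_j|x| = \theta_{j+1}|x|/4$ absorbs this) and the verification that the hypotheses on $r$ and $\epsilon$ make $\theta_0$ large enough to initialize condition (i), so your version is a more explicit account of the same induction. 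One small arithmetic slip: you wrote $16\theta_j|x| = \theta_{j+1}|x|/4$, but $\theta_{j+1}/4 = 4^4\theta_j/4 = 64\theta_j$; since $9 < 64$ this is harmless.
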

\begin{proof}
We say that $\wti Q$ and $\alpha>0$ satisfy the {\em containing condition} if for any $q\in Q$, there exists $x\in\wti Q$ satisfying $\mc B(q;r^2)\subset \mc B(x;\alpha|x|/4)$ or $\mc B(q;r^2)\subset \mc B(p;\alpha \epsilon^{3/4}r^{5/4}/4)$.

We first note that for any $\alpha\in [1/4^{4I+3},1/16]$, $Q$ and $\alpha$ satisfy the containing condition.

Now we proceed to the proof of the proposition. In the first step, take $\theta_1=1/4^{4I+3}$ and a subset $Q_1\subset Q$ so that 
\begin{itemize}
\item $Q_1$ and $\theta_1$ satisfy the containing condition;
\item for any two different point $x',x''\in Q_1$, $\mc B(x';\alpha|x'|/4)\setminus \mc B(x'';\alpha|x''|/4)\neq \emptyset$, $\mc B(x';\alpha|x'|/4)\setminus\mc B(p;\alpha \epsilon^{3/4}r^{5/4}/4)\neq \emptyset$.
\end{itemize}

If $\theta_1$ satisfies all we need, then we are done. Otherwise, there exist two different points $x,y\in Q_1$ so that $\mc B(x;4\theta|x|)\cap \mc B(y;4\theta|y|)=\emptyset$ or $\mc B(x;4\theta|x|)\cap \mc B(p;4\theta \epsilon^{3/4}r^{5/4})= \emptyset$. In both cases, we can take $\theta_2=4^4\theta_1$ and $Q_2\subset Q_1$ so that $Q_2$ and $\theta_2$ satisfy the above containing condition. 

Note that now the number of the elements in $Q_2$ is less than that of $Q_1$. Hence we can repeat the argument at most $I$ steps. Suppose that we stop at $Q_k$ and $\theta_k$. Then $\theta=\theta_k$ and $Q'=Q_k$ are the desired constant and subset.
\end{proof}

\subsection{The lower bound}
In this subsection, we prove a lower bound of $\mc I(p;\epsilon r)$ if $w$ satisfying the minimal condition. 
\begin{lemma}\label{lem:mp for all annuli}
Suppose that $\epsilon>r^{1/4}$ Then for $\epsilon$ small enough, there exists a constant $C$ so that for any positive function $u$ on $\mc A(q;\rho,R)\subset \mc B(p;\epsilon r)$ satisfying $|\Delta w|\leq C_0w/(r\epsilon^3)$, we have
\[\min_{\partial \mc A(p;\rho,R)}w\leq C\min_{\mc A(p;\rho,R)}w,\ \  \max_{ \mc A(p;\rho,R)}w\leq C\max_{\partial \mc A(p;\rho,R)}w.\]
\end{lemma}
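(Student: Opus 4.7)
The plan is to reduce the statement to a standard weak maximum/minimum principle via rescaling, exploiting the fact that the hypothesis $\epsilon > r^{1/4}$ forces the zeroth-order coefficient in the equation to be very small relative to the diameter of the annulus.

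First I would observe that since $\mc A(q;\rho,R) \subset \mc B(p;\epsilon r)$, the intrinsic diameter of the annulus is at most $2\epsilon r$. Writing $V := C_0/(r\epsilon^3)$ for the coefficient in the PDE inequality, this gives
\[
V \cdot \mathrm{diam}(\mc A)^2 \;\leq\; \frac{C_0}{r\epsilon^3}\cdot (2\epsilon r)^2 \;=\; \frac{4C_0 r}{\epsilon}.
\]
The assumption $\epsilon > r^{1/4}$ is equivalent to $r < \epsilon^4$, so $r/\epsilon < \epsilon^3$ and the product above is bounded by $4C_0\epsilon^3$, which is arbitrarily small for $\epsilon$ small.

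Next, I would rescale the metric on $\mc B(p;\epsilon r)$ by the factor $1/(\epsilon r)$, obtaining $\tilde g := g/(\epsilon r)^2$, in which the ball has unit radius. Since $\tilde \Delta = (\epsilon r)^2 \Delta$, the hypothesis becomes
\[
|\tilde\Delta w| \;\leq\; \delta \, w, \qquad \delta := \frac{C_0 r}{\epsilon} \leq C_0 \epsilon^3.
\]
Because $\mc N$ is smooth, as $\epsilon r \to 0$ the rescaled metric $\tilde g$ is uniformly $C^\infty$-close to the Euclidean metric on the unit disc. So we are now in a universal geometric setting: a fixed-size nearly flat disc, with a PDE coefficient $\delta$ tending to $0$.

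At this point the statement reduces to the following standard fact: for a positive function $w$ on a subdomain $\Omega$ of a nearly-Euclidean unit disc satisfying $|\tilde\Delta w| \leq \delta w$ with $\delta$ sufficiently small, there is a uniform constant $C$ with
\[
\max_{\overline{\Omega}} w \;\leq\; C \max_{\partial\Omega} w, \qquad \min_{\partial \Omega} w \;\leq\; C \min_{\overline{\Omega}} w.
\]
The first inequality follows by comparison with a fixed positive barrier $\phi(x) = 2 - |x|^2$ on the unit disc: in Euclidean metric $\tilde\Delta \phi = -4$, so for $\tilde g$ close enough to flat and $\delta$ small enough we have $\tilde\Delta \phi + \delta \phi \leq -2$, making $\phi$ a uniform supersolution of $\tilde\Delta + \delta I$; applying the standard comparison principle to $w$ and a suitable multiple of $\phi$ with the right boundary data yields the bound. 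The minimum inequality follows symmetrically, either by comparison with a subsolution of $\tilde\Delta - \delta I$ or by observing that $1/w$ satisfies an inequality of the same type. Since both conclusions are scale-invariant, pulling back through the rescaling gives the lemma in the original metric.

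The main (minor) subtlety is constructing the barrier uniformly over all possible annular subdomains $\Omega$ -- after rescaling, the inner and outer radii $\tilde\rho = \rho/(\epsilon r)$ and $\tilde R = R/(\epsilon r)$ can be anywhere in $(0,1]$. The virtue of taking the barrier $\phi = 2 - |x|^2$ on the ambient unit disc (rather than a barrier adapted to $\Omega$) is that it produces bounds that do not depend on the shape of $\Omega$; beyond this the proof is bookkeeping.
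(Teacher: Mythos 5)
Your argument is sound in outline and rests on the same key observation that drives the paper's one-line proof: since $\mc A(q;\rho,R)\subset \mc B(p;\epsilon r)$ forces $R\leq 2\epsilon r$, one has $R^2|\Delta w|\leq 4\epsilon^2 r^2\cdot C_0 w/(r\epsilon^3)=4C_0(r/\epsilon)w$, and $r<\epsilon^4$ makes this $<w/4$ for $\epsilon$ small. At that point the paper simply invokes its general Lemma \ref{lem:mp from pde}, which handles any annulus with $R^2|\Delta w|\leq w/4$ by the substitutions $w_1=e^{-r/(2R)}w$, $w_2=e^{r^2/(4R^2)}w$ (whose effect is to remove the wrong-signed zeroth-order term so that GT Theorem 8.1 applies). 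You instead rescale the ambient ball to unit size and re-derive that maximum principle from scratch with the quadratic barrier $\phi=2-|x|^2$; this is the same device in a different dress, and both are correct. Two points to tighten, though. First, the ``standard comparison principle'' does not apply directly to $\tilde\Delta+\delta I$, since its zeroth-order coefficient is positive (indeed the comparison principle for $\Delta+\lambda_1$ fails); you must first pass to the ratio $h=w/\phi$, which satisfies a differential inequality with zeroth-order coefficient $\phi^{-1}(\tilde\Delta\phi+\delta\phi)<0$, and only then invoke the weak maximum principle---this is exactly the role played by the exponential substitutions in the paper's Lemma \ref{lem:mp from pde}, and it should be said rather than summarized as ``bookkeeping.'' Second, your proposed alternative for the minimum bound via $1/w$ does not work as stated: $\tilde\Delta(1/w)=-\tilde\Delta w/w^2+2|\nabla w|^2/w^3$ contains a gradient term that the hypotheses give you no control over, so $1/w$ does not satisfy an inequality of the same form. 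Your other suggested route---comparing with a positive subsolution of $\tilde\Delta-\delta I$---is the correct one.
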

\begin{proof}
Note that 
\begin{align*}
R^2|\Delta w|\leq 4r^2\epsilon^2|\Delta w|\leq C_0wr/\epsilon<w/4.
\end{align*}
Then our desired result follows from Lemma \ref{lem:mp from pde}.
\end{proof}

In the following of this subsection, we always assume that 
\begin{gather*}
\Xi :=\mc A(p;r^2,\epsilon r)\setminus\bigcup_{j=1}^I \mc B(q_j;r^2) \subset \mc N.
\end{gather*}

\begin{proposition}\label{prop:at least a large residue}
Let $w\in C^2(\Xi)$ be a positive function. Suppose that $w$ satisfies (\ref{minimal condition:quadratic increase})(\ref{minimal condition:gradient estimate}) in Definition \ref{def:minimal condition} with constant $C_0$ and
\begin{itemize}
\item $|\Delta w|\leq C_0 w/(r\epsilon^3)$;
\item $\mc I (p;\epsilon r)\leq \tau_0 |\log(\epsilon/r)|$, where $\tau_0=\tau(p;\epsilon r)$.
\end{itemize}
Let $Q=\{q_j\}$ and take $Q'$ and $\theta$ by Proposition \ref{prop:good subset and scale}. 

Then there exists $x_j\in Q'$ such that $\tau(x_j;\theta|x_j|)\geq \tau_0/4^{I+1}$ or $\tau(p;\theta \epsilon^{3/4}r^{5/4})\geq \tau_0/4^{I+1}$.
\end{proposition}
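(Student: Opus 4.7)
The plan is to apply the divergence theorem on a pruned annular region and run a pigeonhole argument on the resulting $|Q'|+1\le I+1$ boundary fluxes. I would set
\[
\Omega := \mc B(p;\epsilon r) \setminus \Bigl[\overline{\mc B(p;\theta\epsilon^{3/4}r^{5/4})} \cup \bigcup_{x\in Q'}\overline{\mc B(x;\theta|x|)}\Bigr],
\]
and use property~(i) of Proposition~\ref{prop:good subset and scale} to check that every original hole $\mc B(q_j;r^2)$ sits inside one of the deleted balls. Taking $r$ small enough so that $\theta\epsilon^{3/4}r^{5/4}\ge r^2$ gives $\Omega\subset\Xi$, so $w$ is $C^2$ on $\overline{\Omega}$, and the divergence theorem yields
\[
\int_\Omega \Delta w \, dA \;=\; \tau_0 - \tau(p;\theta\epsilon^{3/4}r^{5/4}) - \sum_{x\in Q'}\tau(x;\theta|x|).
\]

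I would then argue by contradiction. If all $|Q'|+1 \leq I+1$ fluxes on the right-hand side are strictly less than $\tau_0/4^{I+1}$, then since $(I+1)/4^{I+1}\le 1/4$ their sum is $<\tau_0/4$, so the identity forces $\int_\Omega\Delta w > (3/4)\tau_0$. The task becomes to bound $|\int_\Omega\Delta w|$ strictly below $(3/4)\tau_0$ and conclude.

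For the error estimate I would combine the assumed Laplace bound $|\Delta w|\le C_0\, w/(r\epsilon^3)$ with the coarea formula to write
\[
\int_\Omega |\Delta w| \;\le\; \frac{C_0}{r\epsilon^3}\int_0^{\epsilon r} s\,\mc I(p;s)\,ds,
\]
and then propagate the hypothesis $\mc I(p;\epsilon r)\le\tau_0|\log(\epsilon/r)|$ to all intermediate scales $s\in(\theta\epsilon^{3/4}r^{5/4},\epsilon r)$ using the ODE $\mc I'(s)=\tau(p;s)/s+O(s)\mc I(s)$ (derived as in the proof of Corollary~\ref{cor:small scal for I}) together with sub-annular versions of the divergence identity above. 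Under the contradiction assumption every intervening flux is controlled from above by $\tau_0/4^{I+1}$, so integrating the ODE should give $\mc I(p;s)\le C\tau_0|\log(\epsilon/r)|$ uniformly on the relevant range. Plugging this back in and using $\epsilon>r^{1/4}$ yields $\int_\Omega |\Delta w| \le C\tau_0\, r|\log(\epsilon/r)|/\epsilon$, which is $o(\tau_0)$ for $\epsilon$ small, contradicting the lower bound $(3/4)\tau_0$.

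The main obstacle is the propagation step for $\mc I$: the crude pointwise bound $w\le 2C_0 r^2$ coming from~(a) alone gives $\int_\Omega|\Delta w|\le Cr^3/\epsilon$, which is small in absolute terms but does not scale with $\tau_0$ and hence cannot by itself close the contradiction. One must carefully leverage the hypothesis on $\mc I$, the Harnack inequality in~(b), and the upper flux bounds provided by the contradiction hypothesis in order to obtain a $\tau_0$-proportional estimate.
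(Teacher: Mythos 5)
Your high-level plan --- one application of the divergence theorem to the fully punctured region $\Omega$, then a pigeonhole on the $|Q'|+1\le I+1$ inner fluxes, then a contradiction with a bound on $\int_\Omega|\Delta w|$ --- is a correct and clean reformulation of what the paper is doing. The paper does not package it as a single contradiction; it instead proves directly, under the assumption that every $\tau(y;\theta|y|)$ is small, that $\tau(p;s)$ stays of order $\tau_0$ for a decreasing sequence of radii $s$, by an explicit forward induction. Logically the two are the same divergence identity read in opposite directions. So you have found the right skeleton, and your setup checks (in particular $\Omega\subset\Xi$, the pigeonhole constant $(I+1)/4^{I+1}\le 1/4$, and the observation that the crude bound $w\le 2C_0r^2$ from (a) alone fails because it is not $\tau_0$-proportional) are all correct.

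Where your proposal genuinely stops short is exactly the step you flag as ``the main obstacle'': establishing $\mathcal I(p;s)\le C\tau_0|\log(\epsilon/r)|$ uniformly down to $s\sim\theta\epsilon^{3/4}r^{5/4}$. This cannot be done in one pass, because of a three-way circularity: bounding $\int_\Omega|\Delta w|$ needs the pointwise bound on $w$, which via Harnack needs the $\mathcal I$ bound, which via the ODE $\mathcal I'(s)=\tau(p;s)/s+O(s)\mathcal I(s)+(\text{error from }\Delta w)$ needs both a lower bound on $\tau(p;s)$ and control of the same $\Delta w$ error; and the lower bound on $\tau(p;s)$ comes from a sub-annular divergence identity whose error term is again $\int|\Delta w|$. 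The paper breaks this circle by marching inward: it uses Lemma~\ref{lem:number lemma} to sort $Q'$ into at most $I$ annular bands separated by $Q'$-free gaps, then alternates Step~1 (ODE for $\mathcal I$ across a gap, giving $\tau$ lower bound and $\mathcal I$ upper bound at the next smaller radius) and Step~2 (Harnack across a $Q'$-rich band plus the divergence identity subtracting the small hole fluxes, giving the $\tau$ lower bound at the far side of the band), bootstrapping each step from the output of the previous one and losing a bounded factor ($\le 4$) per band. Your sketch names the right ingredients --- the ODE, the sub-annular divergence identities, Harnack --- but the phrase ``integrating the ODE should give'' glosses over precisely this inductive bookkeeping; without specifying the order in which the estimates are established across the bands, and without noting that $\mathcal I(p;s)$ and $\tau(p;s)$ are only meaningful for $s$ at which $\partial\mathcal B(p;s)$ avoids the deleted balls (so that one \emph{must} decompose into gaps and bands), the argument does not close. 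In short: the strategy is the paper's strategy, but the paper's Steps 1--2 induction is the proof of your propagation lemma, and your proposal leaves that lemma unproved.
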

\begin{proof}
We assume that for $y\in Q'$, $\tau(y;\theta|y|)\leq \tau/4^I$. To prove the proposition, it suffices to show that $\tau(p;\theta \epsilon^{3/4}r^{5/4})\geq \tau_0/4^{I+1}$.

By Lemma \ref{lem:number lemma}, we can find $2N+1(\leq 2I+1)$ non-negative intergers $\{k_j\}_{j=0}^{2N}$ such that 
\begin{itemize}
\item $k_{j+1}-k_j\geq 3$;
\item $k_{2j}-k_{2j-1}\leq 10I+1$ for all $j\geq 1$;
\item $Q'\subset \bigcup_{j=1}^{2N}\mc A(p;2^{k_{2j-1}+1}\theta \epsilon^{3/4}r^{5/4},2^{k_{2j}-1}\theta \epsilon^{3/4}r^{5/4})$.
\end{itemize}

We now proceed the desired results by several steps:

{\bf Step 1:} We show that $\tau(p;2^{k_{2N}}\theta \epsilon^{3/4}r^{5/4})\geq \tau_0/2$ and $\mc I(p;2^{k_{2N}}\theta \epsilon^{3/4}r^{5/4})\leq 2\tau_0|\log(\epsilon/r)|$. Indeed, from (\ref{eq:derative of Is}), together with Lemma \ref{lem:mp for all annuli}, we have
\begin{align*}
|\partial_s\mc I(p;s)-\tau_0 s^{-1}|&\leq s^{-1}\int_{\mc A(p;s,\epsilon r)}|\Delta w|+Cs\mc I(p;s)\\
&\leq s^{-1}\int_{\mc A(p;s,\epsilon r)} C_0w/(r\epsilon^3)+Cs\mc I(p;s)\\
&\leq s^{-1}\int_{\mc A(p;s,\epsilon r)}C(\mc I(p;s)+\mc I(p;\epsilon r))/(r\epsilon^3)+Cs\mc I(p;s)\\
&\leq \frac{Cr (\mc I(p;s)+\mc I(p;\epsilon r))}{\epsilon s}.
\end{align*}
Integrating it from $s$ to $\epsilon r$, we get  
\begin{align*}
&|\mc I(p;\epsilon r)-\mc I(p;s)-\tau_0 \log(\epsilon/r)+\tau_0\log s|\\
\leq&\ C_0(\mc I(p;s)+\mc I(p;\epsilon r))(r/\epsilon)|\log(\epsilon/r)|.
\end{align*}
Taking $\epsilon$ small enough, such a inequality implies 
\[\mc I(p;\epsilon r)-\mc I(p;s)-\tau_0\log(\epsilon r/s)\geq -(\mc I(p;s)+\mc I(p;\epsilon r))/10.\]
Hence
\[\mc I(p;s)\leq 2\mc I(p;\epsilon r)\leq 2\tau_0|\log(\epsilon/r)| .\]
By the Harnack inequality from Corollary \ref{cor:harnack for equivalent radius}, we conclude that 
\[w(x)\leq C_1(\mc I(p;\epsilon r)+\mc I(p;2^{k_{2N}}\theta \epsilon^{3/4}r^{5/4}))\leq C_1\tau_0 |\log(\epsilon/r)|,\]
for $x\in \mc A(p;2^{k_{2N}}\theta \epsilon^{3/4}r^{5/4},\epsilon r)$, and $C_1=C_1(\mc N)$. 

Now we show that $\tau(p;s)\geq \tau_0/2$ for $s\in [2^{k_{2N}}\theta \epsilon^{3/4}r^{5/4},\epsilon r]$. Indeed, the divergence theorem gives that
\begin{align*}
\big|\tau(p;s)-\tau_0\big|&\leq\int_{\mc A(p;s,\epsilon r)}|\Delta w|\leq\int_{\mc A(p;s,\epsilon r)}C_0 w/(r\epsilon^3)\\
                         &\leq \int_{\mc A(p;s,\epsilon r)}C_0C_1\tau_0|\log(\epsilon/r)|/(r\epsilon^3)\\
                         &\leq C_0C_1\tau_0(r/\epsilon)|\log(\epsilon/r)|\leq \tau_0/2.
\end{align*}
It follows that $\tau(p;s)\geq \tau_0/2$.

{\bf Step 2:} $\mc I(p;2^{k_{2N-1}}\theta \epsilon^{3/4}r^{5/4})\leq C_1\tau_0|\log(\epsilon/r)|$ and $\tau(p;2^{k_{2N-1}}\theta \epsilon^{3/4}r^{5/4})\geq \tau_0/4 $.

Denote by $\Omega_{j}$ the domain $\mc A(p;2^{k_{j-1}}\theta \epsilon^{3/4}r^{5/4},2^{k_{j}}\theta \epsilon^{3/4}r^{5/4})\setminus \bigcup_{y\in Q'}\mc B(y;\theta|y|)$.

Since $k_{2N}-k_{2N-1}\leq 10I+1$, then by the Harnack inequality from Corollary \ref{cor:harnack for equivalent radius}, there exists $C_1'=C_1'(\mc N,I)$ satisfying
\[w(x)\leq C'_1\tau_0|\log(\epsilon/r)|, \text{ for all } x\in\Omega_{2N}. \]

Now we use this upper bound of $u$ to estimate $\tau(p,2^{k_{2N-1}}\theta \epsilon^{3/4}r^{5/4})$. By divergence theorem,
\begin{align*}
&\Big|\tau(p;2^{k_{2N}}\theta \epsilon^{3/4}r^{5/4})-\tau(p;2^{k_{2N-1}}\theta \epsilon^{3/4}r^{5/4})-\sum_{y\in Q'\cap\mc A(p;2^{k_{2N-1}}\theta \epsilon^{3/4}r^{5/4},2^{k_{2N}}\theta \epsilon^{3/4}r^{5/4})}\tau(y;\theta |y|)\Big|\\
\leq &\int_{\Omega_{2N}}|\Delta w|\leq\ \int_{\Omega_{2N}}C_0C_1'\cdot \tau_0|\log(\epsilon/r)|/(r\epsilon^3)\leq\tau_0/4. 
\end{align*}

Note that by assumptions, $\tau(y;\theta|y|)\leq \tau_0/4^I$ for any $y\in Q'$. Thus we conclude that $\tau(p;2^{k_{2N-1}}\theta \epsilon^{3/4}r^{5/4})\geq \tau_0/4 $.
Thus we finish the proof of Step 2.

\vspace{1em}
Running the argument in Step 1 again, we can prove that $\tau(p;2^{k_{2N-2}}\theta \epsilon^{3/4}r^{5/4})\geq \tau_0/8$ and $\mc I(p;2^{k_{2N-2}}\theta \epsilon^{3/4}r^{5/4})\leq C_1\tau_0 |\log (\epsilon/r)|$. Then repeating the argument in Step 2, we obtain $\tau(p;2^{k_{2N-3}}\theta \epsilon^{3/4}r^{5/4})\geq \tau_0/16$ and $\mc I(p;2^{k_{2N-3}}\theta \epsilon^{3/4}r^{5/4})\leq C_1\tau_0 |\log(\epsilon/r)|$.

By induction, we conclude that $\tau(p;\theta \epsilon^{3/4}r^{5/4})\geq \tau_0/4^I$. Thus we complete the proof of Proposition \ref{prop:at least a large residue}.
\end{proof}

\begin{proposition}\label{prop:good sequence of Q from R to R3/4}
Let $w\in C^2(\Xi)$. Suppose that $w$ satisfies minimal condition with constant $C_0$ and $\mc I (p;\epsilon r)\leq \tau_0|\log(\epsilon/r)|$, where $\tau_0=\tau(p;\epsilon r)$. Take $Q'\subset Q$ and $\theta$ by Proposition \ref{prop:good subset and scale}. Then either 
\begin{equation*}
\tau(y,\theta|y|)\leq \tau_0/4^{I+1}, \forall y\in Q',
\end{equation*}
or there exist $c_0=c_0(C_0,\mc N,I)$, $c_1=c_1(C_0,\mc N,I)$, a sequence $y_1,y_2,...,y_k\in Q$ and $\theta_1,\theta_2,...,\theta_{k+1}\in (1/4^{4I+3},1/16)$ such that 
\begin{enumerate}[(i)]
\item\label{good sequence:suitable y_1} $\mc I(p;\epsilon r)-c_0\mc I(y_1;\theta_1|y_1|)\geq c_0\tau_0 (\log \epsilon r-\log(\theta_1|y_1|)-c_1)$;
\item\label{good sequence:controlled one by one} for $j\leq k-1$, $\mc I(y_j;\theta_j|y_j-y_{j-1}|)-c_0\mc I(y_{j+1};\theta_{j+1}|y_{j+1}-y_j|)\geq c_0\tau_0 \big(\log(\theta_j|y_j-y_{j-1}|)-\log(\theta_{j+1}|y_{j+1}-y_j|)-c_1\big)$, where $y_0=p$;
\item\label{good sequence:continue to R3/4} 
\begin{align*}
&\mc I(y_k;\theta_k|y_k-y_{k-1}|)-c_0\mc I(y_k;\theta_{k+1}\epsilon^{3/4}r^{5/4})\\
\geq& c_0\tau_0 \big(\log(\theta_k|y_k-y_{k-1}|)-\log(\theta_{k+1}\epsilon^{3/4}r^{5/4})-c_1\big).
\end{align*}
\end{enumerate}
\end{proposition}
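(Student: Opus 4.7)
The plan is to iterate Proposition \ref{prop:at least a large residue}, selecting at each stage a singularity that carries a non-trivial fraction of the outgoing flux, and then to convert the flux estimates into the stated inequalities for $\mc I$ by integrating the relation $\partial_s\mc I\sim \tau/s$ and invoking the Harnack estimate of Corollary \ref{cor:harnack for equivalent radius} to transfer mass between different centres.

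\textbf{Step A (initial selection).} Apply Proposition \ref{prop:at least a large residue} at the centre $p$ with the data $Q'$, $\theta$ supplied by Proposition \ref{prop:good subset and scale}. If $\tau(y;\theta|y|)\leq \tau_0/4^{I+1}$ for every $y\in Q'$, we land in the first alternative of the proposition and there is nothing left to prove. Otherwise pick $y_1\in Q'$ with $\tau(y_1;\theta|y_1|)\geq \tau_0/4^{I+1}$ and set $\theta_1:=\theta$.

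\textbf{Step B (proof of (i)).} The derivative computation from Step 1 in the proof of Proposition \ref{prop:at least a large residue} gives
\[
\bigl|\partial_s\mc I(p;s)-\tau(p;s)/s\bigr|\leq Cs\,\mc I(p;s),
\]
while the divergence theorem and the Laplacian bound in the minimal condition show that $|\tau(p;s)-\tau_0|$ stays a small fraction of $\tau_0$ for $s\in[2|y_1|,\epsilon r]$. Integrating on this interval and absorbing the lower-order terms using $\mc I(p;s)\leq 2\tau_0|\log(\epsilon/r)|$ and smallness of $\epsilon$ yields
\[
\mc I(p;\epsilon r)-\mc I(p;2|y_1|)\geq \frac{1}{2}\tau_0\log\bigl(\epsilon r/|y_1|\bigr)-C\tau_0.
\]
Then Corollary \ref{cor:harnack for equivalent radius} on the annulus $\mc A(p;|y_1|,2|y_1|)$, with the single interior hole $\mc B(y_1;\theta_1|y_1|)$ (which sits well inside by the choice of $\theta_1$ in Proposition \ref{prop:good subset and scale}), compares $\mc I(p;2|y_1|)$ with $\mc I(y_1;\theta_1|y_1|)$ up to a multiplicative constant depending only on $\theta_1,\mc N,I$. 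Inequality (i) follows upon selecting $c_0,c_1$ appropriately.

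\textbf{Step C (iteration and termination).} Recentre at $y_1$: the new outer ball is $\mc B(y_1;\theta_1|y_1|)$, the new outer flux is $\tau(y_1;\theta_1|y_1|)\geq \tau_0/4^{I+1}$, and the remaining singularities $Q\cap \mc B(y_1;\theta_1|y_1|/2)$ form a strictly smaller subset of $Q$. Reapplying Propositions \ref{prop:good subset and scale} and \ref{prop:at least a large residue} to this subconfiguration produces either a next point $y_2$ carrying a definite fraction of the current flux at scale $\theta_2|y_2-y_1|$ (and Step B, applied with $p$ replaced by $y_1$, yields (ii) for $j=1$), or concentration of the flux at the reserved small scale $\theta_2\epsilon^{3/4}r^{5/4}$, in which case the procedure terminates with (iii) and $k=1$. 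Iterating, since $\#Q\leq I$ and each step removes at least one singularity from the effective set, the process terminates in $k\leq I$ steps, producing the whole sequence.

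\textbf{Main obstacle.} The chief technical difficulty is the compounding loss of constants across iterations: every application of Proposition \ref{prop:at least a large residue} costs a factor $4^{-(I+1)}$, so at depth $k$ the genuine outgoing flux is only $\tau_0/4^{k(I+1)}$, whereas the right-hand sides of (i)--(iii) are written in terms of $\tau_0$; the constants $c_0,c_1$ must absorb losses of order $4^{-I(I+1)}$, which is acceptable because they are allowed to depend on $I$. The more subtle point is that invoking Proposition \ref{prop:at least a large residue} at each stage requires the inductive hypothesis $\mc I(y_j;\cdot)\leq (\text{current flux})\cdot|\log|$; this must be bootstrapped on each rescaled annulus from Corollary \ref{cor:harnack for equivalent radius} together with the observation that the minimal condition (Definition \ref{def:minimal condition}) is preserved, with the same constant $C_0$, under restriction to the subdomains arising in the iteration.
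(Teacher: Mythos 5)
Your plan is broadly aligned with the paper's own argument: iterate Proposition \ref{prop:at least a large residue} to produce a sequence $y_1,\dots,y_k$ along which flux is transferred, then convert the flux estimates into $\mc I$-inequalities by integrating $\partial_s\mc I$ and invoking Corollary \ref{cor:harnack for equivalent radius}. The termination argument via removing a point of $Q$ at each stage is also the one the paper uses. However, Step B as written has two genuine gaps.

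First, the choice of $y_1$ is underspecified in a way that breaks the flux estimate. You merely ``pick $y_1\in Q'$ with $\tau(y_1;\theta|y_1|)\geq\tau_0/4^{I+1}$.'' But the claim that $\tau(p;s)$ remains a definite fraction of $\tau_0$ for $s\in[2|y_1|,\epsilon r]$ depends on all singularities with $|y|>|y_1|$ having \emph{small} $\tau(y;\theta|y|)$. If one picks an inner $y_1$ and there is some $y'$ with $|y'|>|y_1|$ carrying a large (possibly dominant) fraction of $\tau_0$, then crossing the shell containing $y'$ can drop $\tau(p;s)$ essentially to zero, and $\mc I(p;\epsilon r)-\mc I(p;2|y_1|)\geq\tfrac12\tau_0\log(\epsilon r/|y_1|)-C\tau_0$ fails. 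The paper therefore selects $y_1$ to be the \emph{outermost} element of $Q'$ with $\tau(y_1;\theta_1|y_1|)\geq\tau_0/4^{I+1}$, so that all $y\in Q'$ with $|y|>|y_1|$ satisfy $\tau(y;\theta_1|y|)<\tau_0/4^{I+1}$; the same outermost selection is repeated for each $y_{j+1}$ in the iteration.

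Second, even granting the outermost choice, the ``integrate on $[2|y_1|,\epsilon r]$'' step is not valid as stated. The minimal condition bounds $|\Delta w|$ by $C_0w+C_0w^3/d^4(x,Q)$, which blows up like $w^3/r^8$ near a hole $\mc B(q_j;r^2)$. The ODE-type inequality $|\partial_s\mc I(p;s)-\tau(p;s)/s|\leq Cs\,\mc I(p;s)$ therefore only holds on geodesic spheres that stay a definite distance (comparable to $\epsilon^{3/4}r^{5/4}$) from $Q$. The paper handles this by the dyadic decomposition of Lemma \ref{lem:number lemma}: it integrates the flux/mean identity only across the singularity-free annuli $\mc A(p;2^{k_{2j}}\cdot,2^{k_{2j+1}}\cdot)$, and on the shells that do contain singularities it passes from the inner to the outer $\mc I$ by the Harnack estimate of Corollary \ref{cor:harnack for equivalent radius}; the final inequality (i) is then obtained by summing these pieces with geometric weights (Appendix \ref{sec:appendix:suitable coeff}). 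Your direct integral over the whole interval, followed by a single Harnack comparison across the annulus $\mc A(p;|y_1|,2|y_1|)$, skips this and is not justified under the hypotheses you have. Once the outermost selection and the dyadic shell argument are inserted, the remainder of your sketch (recentreing, iteration, and the terminal estimate (iii)) does match the paper.
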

\begin{proof}
Without loss of generality, we assume that there exist $y'\in Q'$ so that 
\begin{equation}\label{eq:all small condition}
\tau(y';\theta|y'|)\geq \tau_0/4^{I+1}.
\end{equation}
Set $\theta_1=\theta$ and take $y_1\in Q'$ so that $\tau(y_1;\theta_1|y_1|)\geq \tau_0/4^{I+1}$ and 
\[\tau(y;\theta_1 |y|)< \tau_0/4^{I+1},\ \forall y\in Q' \text{ with } |y|>|y_1|.\]
We remark that such $y_1$ is well-defined by (\ref{eq:all small condition}).

\vspace{0.7em}
{\bf Step A:} We first show that there exists $c_0=c_0(C_0,\mc N,I)$ and $c_1=c_1(C_0,\mc N,I)$ so that (\ref{good sequence:suitable y_1}) is satisfied.
 
By Lemma \ref{lem:number lemma}, we can find $2N+1(\leq 2I+1)$ non-negative intergers $\{k_j\}_{j=0}^{2N}$ such that 
\begin{itemize}
\item $k_{j+1}-k_j\geq 3$;
\item $k_{2j}-k_{2j-1}\leq 10I+1$ for all $j\geq 1$;
\item $Q'\subset \bigcup_{j=1}^{2N} \mc A(p;2^{k_{2j-1}+1}\theta_1 \epsilon^{3/4}r^{5/4},2^{k_{2j}-1}\theta_1 \epsilon^{3/4}r^{5/4})$.
\end{itemize} 
Hence there exists $j'\leq N$ such that $y_1\in \mc A(p;2^{k_{2j'-1}+1}\theta_1 \epsilon^{3/4}r^{5/4},2^{k_{2j'}-1}\theta_1 \epsilon^{3/4}r^{5/4})$. By the choice of $y_1$, 
\[\tau(y;\theta_1 |y|)\leq \tau_0/4^{I+1},\ \forall y\in Q'\cap \mc A(p;2^{k_{2j'}}\theta_1 \epsilon^{3/4}r^{5/4},\epsilon r).\]

Then by the induction in Step 1 and 2 in Proposition \ref{prop:at least a large residue}, we conclude that 
\begin{gather}
w(x)\leq C_1'\tau_0 |\log(\epsilon/r)|, \ \ \text{ for all } x\in \mc A(p;2^{k_{2j'-1}}\theta_1 \epsilon^{3/4}r^{5/4},\epsilon  r)\setminus\bigcup_{y\in Q'}\mc B(y,\theta_1|y|),\\
\label{eq:large tau from 2j' to 2N}\tau(p;2^{k_j}\theta_1 \epsilon^{3/4}r^{5/4})\geq \tau_0/4^{I+1}, \ \ \text{ for all } 2j'\leq j\leq 2N,
\end{gather}
where $C_1'=C_1'(\mc N,I)$.

Using the estimates of $\tau$ for $2j'\leq j\leq 2N$, (\ref{eq:derative of Is}) together with divergence theorem implies that for $s\in [2^{k_{2j}}\theta_1\epsilon^{3/4}r^{5/4},2^{k_{2j+1}}\theta_1\epsilon^{3/4}r^{5/4}]$,
\begin{align*}
|\partial_s\mc I(p;s)-\tau(p;2^{k_{2j+1}}\theta_1\epsilon^{3/4}r^{5/4}) s^{-1}|&\leq\ s^{-1}\int_{\mc A(p;s,2^{k_{2j+1}}\theta_1\epsilon^{3/4}r^{5/4})}|\Delta w|+Cs\mc I(s)\\
&\leq s^{-1}\int_{\mc A(p;s,2^{k_{2j+1}}\theta_1\epsilon^{3/4}r^{5/4})} C_0 w^3/d^4(x,Q)+Cs\mc I(s).
\end{align*}
Note that in this case, $d(x,Q)\geq \epsilon^{3/4}r^{5/4}$. Then the inequality becomes
\begin{align*}
|\partial_s\mc I(p;s)-\tau(p;2^{k_{2j+1}}\theta_1\epsilon^{3/4}r^{5/4}) s^{-1}|&\leq s^{-1}\int_{\mc A(p;s,2^{k_{2j+1}}\theta_1\epsilon^{3/4}r^{5/4})} C_0 w/(r\epsilon^3)+Cs\mc I(s)\\
&\leq s^{-1}C_0(r/\epsilon)(\mc I(p;s)+\mc I(p;2^{k_{2j+1}}\theta_1\epsilon^{3/4}r^{5/4}))+Cs\mc I(s)\\
&\leq C_0(\mc I(p;s)+\mc I(p;2^{k_{2j+1}}\theta_1\epsilon^{3/4}r^{5/4}))r/(\epsilon s).
\end{align*}
Integrating it from $s$ to $2^{k_{2j+1}}\theta_1\epsilon^{3/4}r^{5/4}$, we get  
\begin{align*}
&|\mc I(p;2^{k_{2j+1}}\theta_1\epsilon^{3/4}r^{5/4})-\mc I(p;s)-\tau(p;2^{k_{2j+1}}\theta_1\epsilon^{3/4}r^{5/4}) \log (2^{k_{2j+1}}\theta_1\epsilon^{3/4}r^{5/4}/s)|\\
\leq&\ C_0(\mc I(p;s)+\mc I(p;2^{k_{2j+1}}\theta_1\epsilon^{3/4}r^{5/4}))(r/\epsilon)|\log(\epsilon/r)|\\
\leq& (\mc I(p;s)+\mc I(p;2^{k_{2j+1}}\theta_1\epsilon^{3/4}r^{5/4}))/10.
\end{align*}
Together with (\ref{eq:large tau from 2j' to 2N}), this implies that for $j'\leq j\leq N-1$,
\[\mc I(p;2^{k_{2j+1}}\theta_1\epsilon^{3/4}r^{5/4})-\frac{1}{2}\mc I(p;2^{k_{2j}}\theta_1\epsilon^{3/4}r^{5/4})\geq\frac{\tau_0}{4^{I+2}}(\log (2^{k_{2j+1}}\theta_1\epsilon^{3/4}r^{5/4})-\log (2^{k_{2j}}\theta_1\epsilon^{3/4}r^{5/4})) .\]
A similar argument for $[2^{k_{2N}}\theta_1\epsilon^{3/4}r^{5/4},r]$ gives that 
\begin{equation}\label{eq:estimate from R to outside circle}
\mc I(p;\epsilon r)-\frac{1}{2}\mc I(p;2^{k_{2N}}\theta_1\epsilon^{3/4}r^{5/4})\geq\frac{\tau_0}{4^{I+2}}(\log(\epsilon r)-\log (2^{k_{2N}}\theta_1\epsilon^{3/4}r^{5/4})).
\end{equation}
Recall the Harnack inequality from Corollary \ref{cor:harnack for equivalent radius} gives that 
\begin{gather*}
\mc I(p;2^{k_{2j}}\theta_1 \epsilon^{3/4}r^{5/4})\geq \gamma\mc I(p;2^{k_{2j-1}}\theta_1 \epsilon^{3/4}r^{5/4}),
\end{gather*}
for $1\leq j\leq N$ and $\gamma=\gamma(\mc N,I)<1/2$. Particularly,
\[\mc I(p;2^{k_{2j'}}\theta \epsilon^{3/4}r^{5/4})\geq \gamma\mc I(y_1;\theta_1|y_1|).\]
Then putting them together with suitable coefficients (see Appendix \ref{sec:appendix:suitable coeff} for details), we have
\begin{equation}\label{eq:result by adding with suitable coeff}
\mc I(p;\epsilon r)-c_0\mc I(y_1;\theta_1|y_1|)\geq c_0\tau_0 (\log\epsilon  r-\log(\theta_1|y_1|)-c_1)
\end{equation}
by setting $c_0=\gamma^{2N}/r^{I+2}$ and $c_1=\gamma^{-2N}I(10I+1)\log 2$.

Thus we complete Step A.

\vspace{0.7em}

{\bf Step B:} We now repeat the process in Step A to construct $\{y_j\}$ and $\{\theta_j\}$.

Suppose that we have defined $y_j$ and $\theta_j$, then by Proposition \ref{prop:good subset and scale}, there exist a constant $\theta_{j+1}\in (1/4^{I+3},1/16)$ and a subset $Q_{j+1}\subset Q\cap\mc B(y_j;\theta_j|y_j-y_{j-1}|)$ such that 
\begin{itemize}
\item for any $y\in Q\cap\mc B(y_j;\theta_j|y_j-y_{j-1}|)$, there exists $y'\in Q_{j+1}$ satisfying $\mc B(y;r^2)\subset \mc B(y';\theta_{j+1}|y'-y_j|/4)$ or $\mc B(y;r^2)\subset \mc B(y_j;\theta_{j+1} \epsilon^{3/4}r^{5/4}/4)$;
\item $\mc B(x';4\theta_{j+1}|x'-y_j|)\cap B(x'';4\theta_{j+1}|x''-y_j|)=\emptyset$ for any $x',x''\in Q_{j+1}$ with $x'\neq x''$;
\item $\mc B(x';4\theta_{j+1}|x'-y_j|)\cap\mc B(y_j;4\theta_{j+1} \epsilon^{3/4}r^{5/4})=\emptyset$ for any $x'\in Q_{j+1}$.
\end{itemize} 
Note that for $x\in \mc A(y_j;\theta_{j+1} \epsilon^{3/4}r^{5/4},\theta_j|y_j|)\setminus \bigcup_{y\in Q_{j+1}}\mc B(y;\theta_{j+1}|y-y_j|)$, we always have some constant $C_I$ depending only on $I$ satisfying
\[d(x, Q)\geq \epsilon^{3/4}r^{5/4}/C_I,\]
which implies that
\[|\Delta w|\leq \frac{C_0C_Iw}{r\epsilon^3}.\]
\begin{claim}\label{claim:prepare to construct yj1}
Either $\tau(y;\theta_{j+1}|y-y_j|)\leq \tau(y_j;\theta_j|y_j-y_{j-1}|)/4^{I+1}$ for all $y\in Q_{j+1}$ ($y_0=p$), or there exists $x_{j+1}\in Q_{j+1}$ so that 
\begin{equation}\label{eq:exist of good xj1}
 \tau(x_{j+1};\theta_{j+1}|x_{j+1}-y_j|)\geq \tau(y_j;\theta_j|y_j-y_{j-1}|)/4^{I+1}.
 \end{equation}
\end{claim}
\begin{proof}[Proof of Claim \ref{claim:prepare to construct yj1} ]
The proof here is similar to that of Proposition \ref{prop:at least a large residue} with minor modification. Indeed, it follows by replacing $\mc A(p;r^2,\epsilon r)$ with $\mc A(y_j;r^2,\theta_{j}|y_j-y_{j-1}|)$.
\end{proof}

Whenever $Q_{j+1}\neq \emptyset$ and there exists $x_{j+1}$ so that (\ref{eq:exist of good xj1}) is satisfied, then we can define $y_{j+1}$ to be the element in $Q_{j+1}$ so that $\tau(y_{j+1};\theta_{j+1}|y_{j+1}-y_j|)\geq \tau(y_{j};\theta_{j}|y_{j}-y_{j-1}|)/4^{I+1}$ and 
\[\tau(y;\theta_{j+1}|y-y_j|)< \tau(y_{j};\theta_{j}|y_{j}-y_{j-1}|)/4^{I+1}, \forall y\in Q_{j+1} \text{ with } |y-y_j|>|y_{j+1}-y_j|.\]

Since $y_j\in\mc B(y_j;\theta_j|y_j-y_{j-1}|)\setminus\mc B(y_{j+1};\theta_{j+1}|y_{j+1}-y_j|)$, then such a sequence of $y_j$ is finite, that is, there exists $0<k<I+1$ so that $Q_{k+1}=\emptyset$ or $\tau(y;\theta_{k+1}|y-y_k|)\leq \tau(y_k;\theta_j|y_k-y_{k-1}|)/4^{I+1}$ for all $y\in Q_{k+1}$ ($y_0:=p$).

Using the argument in Step A, by replacing $\mc A(p;r^2,\epsilon r)$ with $\mc A(y_j;r^2,\theta_{j}|y_j-y_{j-1}|)$, we can see that (\ref{good sequence:controlled one by one}) is satisfied. We leave the details to readers.

\vspace{0.7em}
{\bf Step C:} We now prove that such a sequence $\{y_j\}$ satisfies (\ref{good sequence:continue to R3/4}).

By the construction in Step B, we know that either $Q_{k+1}=\emptyset$ or $\tau(y;\theta_{k+1}|y-y_k|)\leq \tau(y_k;\theta_k|y_k-y_{k-1}|)/4^{I+1}$ for all $y\in Q_{k+1}$ ($y_0:=p$).
\begin{claim}\label{claim:step C:someone large}
In both case, $\tau(y_k;\theta_{k+1}\epsilon^{3/4}r^{5/4})\geq \tau(y_k;\theta_{k}|y_k-y_{k-1}|)/4^{I+2}$.
\end{claim}
\begin{proof}[Proof of Claim \ref{claim:step C:someone large}]
Note that for $x\in \mc B(y_k;\theta_{k+1}\epsilon^{3/4} r^{5/4},\theta_k|y_k-y_{k-1}|)\setminus \bigcup_{y\in Q_{k+1}}\mc B(y;\theta_{k+1}|y-y_k|)$, we always have
\[d(x, Q)\geq \epsilon^{3/4}r^{5/4}/C_I,\]
for $C_I$ depending only on $I$. This implies that
\[|\Delta w|\leq \frac{CC_Iw}{r\epsilon^3}.\]
Then the desired results follows from Proposition \ref{prop:at least a large residue} by replacing $\mc A(p;r^2,\epsilon r)$ with $\mc A(y_k;r^2,\theta_{k}|y_k-y_{k-1}|)$
\end{proof}
Then the following is similar to that of Step A. For completeness of the proof, we give more details here.

By Lemma \ref{lem:number lemma}, we can find $2N\leq 2I+1)$ non-negative integers $\{l_j\}_{j=0}^{2N}$ ($l_0=0$) such that 
\begin{itemize}
\item $l_{j+1}-l_j\geq 3$;
\item $l_{2j}-l_{2j-1}\leq 10I+1$ for all $j\geq 1$;
\item $Q_{k+1}\subset \bigcup_{j=1}^{2N} \mc A(p;2^{l_{2j-1}+1}\theta_1 \epsilon^{3/4}r^{5/4},2^{l_{2j}-1}\theta_1 \epsilon^{3/4}r^{5/4})$.
\end{itemize}
Then the argument in Step A implies that for $j\geq0$,
\begin{align*}
&\ \mc I(y_k;2^{l_{2j+1}}\theta_{k+1}\epsilon^{3/4}r^{5/4})-\frac{1}{2}\mc I(y_k;2^{l_{2j}}\theta_{k+1}\epsilon^{3/4}r^{5/4})\\
\geq&\ \frac{\tau(y_k;\theta_k|y_k-y_{k-1}|)}{4^{I+2}}\big[\log (2^{l_{2j+1}}\theta_{k+1}\epsilon^{3/4}r^{5/4})-\log (2^{l_{2j}}\theta_{k+1}\epsilon^{3/4}r^{5/4})\big],
\end{align*}
and
\begin{align*}
&\ \mc I(y_k;\theta_k|y_k-y_{k-1}|)-\frac{1}{2}\mc I(y_k;2^{l_{2N}}\theta_{k+1}\epsilon^{3/4}r^{5/4})\\
\geq&\ \frac{\tau(y_k;\theta_k|y_k-y_{k-1}|)}{4^{I+2}}\big(\log (\theta_k|y_k-y_{k-1}|)-\log(2^{l_{2N}}\theta_{k+1}\epsilon^{3/4}r^{5/4})\big).
\end{align*}
Recall the Harnack inequality from Corollary \ref{cor:harnack for equivalent radius} gives that 
\begin{gather*}
\mc I(y_k;2^{l_{2j}}\theta_{k+1} \epsilon^{3/4}r^{5/4})\geq \gamma\mc I(y_k;2^{l_{2j-1}}\theta_{k+1} \epsilon^{3/4}r^{5/4}),
\end{gather*}
for $1\leq j\leq N$ and $\gamma=\gamma(\mc N,I)<1/2$. Then putting them together with suitable coefficients (also, see Appendix \ref{sec:appendix:suitable coeff} for a similar process), we have
\begin{align*}
&\mc I(y_k;\theta_k|y_k-y_{k-1}|)-c_0\mc I(y_k;\theta_{k+1}\epsilon^{3/4}r^{5/4})\\
\geq& c_0\tau_0 \big[\log(\theta_k|y_k-y_{k-1}|)-\log(\theta_{k+1}\epsilon^{3/4}r^{5/4})-c_1\big].
\end{align*}
by setting $c_0=\gamma^{2N}/r^{I+2}$ and $c_1=\gamma^{-2N}I(10I+1)\log 2$.

This is the desired inequality.
\end{proof}

\begin{lemma}\label{lem:estimate of taulogR}
Let $w$ be the function in Proposition \ref{prop:good sequence of Q from R to R3/4}. Then there exists a constant $C_5=C_5(C_1,\mc N,I)$ so that
\[\tau_0\log(\epsilon/r)\leq C_5\mc I(p;\epsilon r).\]
\end{lemma}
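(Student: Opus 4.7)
The plan is to invoke Proposition \ref{prop:good sequence of Q from R to R3/4} and collapse the hierarchy of inequalities it provides into a single bound via a telescoping argument. If $\mc I(p;\epsilon r) > \tau_0|\log(\epsilon/r)|$ the estimate is trivial, so assume the opposite; then the hypothesis of the proposition holds and we split into its two alternatives.

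In the alternative where no sequence is produced, we have $\tau(y,\theta|y|) \leq \tau_0/4^{I+1}$ for every $y\in Q'$, so Proposition \ref{prop:at least a large residue} forces $\tau(p;\theta\epsilon^{3/4}r^{5/4}) \geq \tau_0/4^{I+1}$. Integrating $\partial_s \mc I(p;s)$ between $\theta\epsilon^{3/4}r^{5/4}$ and $\epsilon r$ exactly as in Step A of Proposition \ref{prop:good sequence of Q from R to R3/4} then yields $\mc I(p;\epsilon r) \geq c\,\tau_0 \log(\epsilon/r)$.

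In the alternative where the sequence $\{y_j\}_{j=1}^{k}$ exists (with $k\leq I$), I will multiply (i) by $1$, each (ii)$_j$ by $c_0^{j}$ for $1\leq j\leq k-1$, and (iii) by $c_0^{k}$, and add. The left-hand side telescopes cleanly to
\[
\mc I(p;\epsilon r) - c_0^{k+1}\mc I\bigl(y_k;\theta_{k+1}\epsilon^{3/4}r^{5/4}\bigr) \leq \mc I(p;\epsilon r).
\]
Writing $R_0 = \epsilon r$, $R_j = \theta_j|y_j - y_{j-1}|$ for $1\leq j\leq k$ and $R_{k+1}=\theta_{k+1}\epsilon^{3/4}r^{5/4}$, the inclusion $y_{j+1}\in\mc B(y_j;R_j)$ from Step B of the proposition forces $R_{j+1} < R_j$, so the differences $A_j := \log(R_{j-1}/R_j)$ are all nonnegative and telescope to $\log(R_0/R_{k+1}) \geq \tfrac14\log(\epsilon/r)$. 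Consequently
\[
\sum_{j=1}^{k+1} c_0^{j} A_j \;\geq\; c_0^{k+1}\sum_{j=1}^{k+1} A_j \;\geq\; \tfrac{c_0^{I+1}}{4}\log(\epsilon/r),
\]
while the $c_1$-corrections contribute only $c_1\tau_0\sum_{j=1}^{k+1} c_0^j \leq C^{\ast}\tau_0$ for a constant $C^{\ast} = C^{\ast}(\mc N, I, C_1)$. This yields $\mc I(p;\epsilon r) \geq \tfrac{c_0^{I+1}}{4}\tau_0\log(\epsilon/r) - C^{\ast}\tau_0$.

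The main obstacle is absorbing the additive $C^{\ast}\tau_0$ term. For $\log(\epsilon/r)$ larger than the threshold $A^{\ast} = 8C^{\ast}/c_0^{I+1}$, half of the main term dominates the error and the conclusion follows with $C_5 = 8/c_0^{I+1}$. In the complementary regime $\log(\epsilon/r)\leq A^{\ast}$, the ratio $\epsilon/r$ is bounded by a universal constant, so the annulus $\mc A(p;r^2,\epsilon r)$ has essentially macroscopic proportions; Corollary \ref{cor:harnack for equivalent radius} then gives $\max_{\partial\mc B(p;\epsilon r)} w \leq C\min_{\partial\mc B(p;\epsilon r)} w$, and a single integration of $\partial_s\mc I$ across a fixed subinterval of $[\epsilon r/2,\epsilon r]$---on which the minimal-condition Laplacian bound contributes only a controllable error---produces $\tau_0 \leq C\,\mc I(p;\epsilon r)$, completing the proof.
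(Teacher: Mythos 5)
Your proposal tracks the paper's own proof very closely: you invoke the same Proposition, choose exactly the same weights ($1$ on (i), $c_0^j$ on (ii)$_j$, $c_0^k$ on (iii)), telescope the left-hand sides, and use that the radii $R_j$ are strictly nested so that the logarithmic differences are nonnegative and collapse to $\log(R_0/R_{k+1})\geq\tfrac14\log(\epsilon/r)$. Your first alternative (no sequence produced) is the paper's first case verbatim.

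Where you go beyond the paper is in making explicit the absorption of the additive $-c_1$ term. The paper's proof just asserts ``It follows that $\tau_0\log(\epsilon/r)\leq C_5\mc I(p;\epsilon r)$'' after writing $\mc I(p;\epsilon r)\geq c_0\tau_0(\tfrac14\log(\epsilon/r)-c_1)$, leaving the reader to notice that this requires $\log(\epsilon/r)$ to dominate the constant $c_1$. Your threshold $A^\ast$ and explicit constant $C_5=8/c_0^{I+1}$ fill that gap. However, your two-regime split is unnecessary here: the lemma is only invoked under the hypotheses of Theorem~\ref{thm:harnack with minimal condition}, which include $\epsilon>r^{1/4}$, so $\log(\epsilon/r)\geq\tfrac34\log(1/r)\geq 3\log(1/\epsilon)>3\log(1/\epsilon_0)$, and one simply shrinks $\epsilon_0$ (which is allowed to depend on $C_1,\mc N,I$, hence on $c_0,c_1$) to force $\log(\epsilon/r)>A^\ast$. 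Your second regime is therefore vacuous, which is fortunate since its treatment is the one imprecise spot in the proposal: the ``single integration across $[\epsilon r/2,\epsilon r]$'' gives at best $\tau_0\lesssim\mc I(p;\epsilon r)+\text{error}$, and controlling that error by $\mc I(p;\epsilon r)$ from below is not immediate; also Corollary~\ref{cor:harnack for equivalent radius} compares different radii rather than giving a Harnack inequality on a single circle, and what you actually want there is the gradient bound from Lemma~\ref{lem:der estimate}. Replacing the second regime with ``take $\epsilon_0$ small so this case does not occur'' streamlines the argument and matches the paper's (implicit) intent while preserving the clarity your version adds.

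One further observation worth noting: the paper's displayed chain of inequalities pulls the weight $c_0^{I+1}$ outside the sum, i.e.\ writes $\sum_j c_0^{j+1}(L_j-L_{j+1}-c_1)\geq c_0^{I+1}\big(\sum_j(L_j-L_{j+1})-c_1(I+1)\big)$, which is only valid term-by-term if every $L_j-L_{j+1}-c_1\geq 0$; this need not hold since $c_1$ can exceed $\log 16$. Your version, which bounds $\sum c_0^{j+1}(L_j-L_{j+1})\geq c_0^{k+1}\sum(L_j-L_{j+1})$ separately from $-c_1\sum c_0^{j+1}\geq -c_1(I+1)$, is the rigorous form of the estimate; the final conclusion is unaffected because the additive constant is absorbed either way, but you have the cleaner account.
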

\begin{proof}
We first assume that 
\begin{equation*}
\tau(y,\theta|y|)\leq \tau_0/4^{I+1}, 
\end{equation*}
for all $y\in Q'$. Then the argument in Step A and Step C in Proposition \ref{prop:good sequence of Q from R to R3/4} gives that 
\begin{align*}
\mc I(p;\epsilon r)-c_0\mc I(p;\theta \epsilon^{3/4}r^{5/4})\geq&\ c_0\tau_0\big(\log \epsilon r-\log (\theta \epsilon^{3/4}r^{5/4})-c_1\big)\\
\geq&\ c_0\tau_0(\frac{1}{4}\log(\epsilon/r)-c_1).
\end{align*}
It follows that $\tau_0\log(\epsilon/r)\leq C_5\mc I(p;\epsilon r)$ for some constant $C_5$ depending only on $c_1,c_0$. Note that $c_0$ and $c_1$ depend only on $C_1,\mc N,I$. This is exactly the desired result.

Now we consider that there is $y\in Q'$ satisfying
\[ \tau(y,\theta|y|)> \tau_0/4^{I+1}.\]
Then by Proposition \ref{prop:good sequence of Q from R to R3/4}, there exist $c_0$ and $c_1$ depending only on the constant $C_0$ (in Proposition \ref{prop:good sequence of Q from R to R3/4}) and $I$, a sequence $y_1,y_2,...,y_k\in Q$ and $\theta_1,\theta_2,...,\theta_{k+1}\in (1/4^{4I+3},1/16)$ such that 
\begin{enumerate}[(i)]
\item $\mc I(p;\epsilon r)-c_0\mc I(y_1;\theta_1|y_1|)\geq c_0\tau_0 (\log\epsilon r-\log(\theta_1|y_1|)-c_1)$;
\item for $j\leq k-1$, $\mc I(y_j;\theta_j|y_j-y_{j-1}|)-c_0\mc I(y_{j+1};\theta_{j+1}|y_{j+1}-y_j|)\geq c_0\tau_0 \big(\log(\theta_j|y_j-y_{j-1}|)-\log(\theta_{j+1}|y_{j+1}-y_j|)-c_1\big)$, where $y_0=p$;
\item
\begin{align*}
&\mc I(y_k;\theta_k|y_k-y_{k-1}|)-c_0\mc I(y_k;\theta_{k+1}\epsilon^{3/4}r^{5/4})\\
\geq& c_0\tau_0 \big[\log(\theta_k|y_k-y_{k-1}|)-\log(\theta_{k+1}\epsilon^{3/4}r^{5/4})-c_1\big].
\end{align*}
\end{enumerate}
By adding them together with suitable coefficients like this:
\begin{align*}
&\mc I(p;\epsilon r)-c_0^{k+1}\mc I(y_k;\theta_{k+1}\epsilon^{3/4}r^{5/4})\\
=&\ \mc I(p;\epsilon r)-c_0\mc I(y_1;\theta_1|y_1|)+c_0^k\Big[\mc I(y_k;\theta_k|y_k-y_{k-1}|)-c_0\mc I(y_k;\theta_{k+1}\epsilon^{3/4}r^{5/4})\Big]+\\
&+\sum_{j=1}^{k-1}c_0^j\Big[\mc I(y_j;\theta_j|y_j-y_{j-1}|)-c_0\mc I(y_{j+1};\theta_{j+1}|y_{j+1}-y_j|)\Big]+\\
\geq&\ c_0\tau_0 (\log\epsilon r-\log(\theta_1|y_1|)-c_1)+c_0^{k+1}\tau_0 \big(\log(\theta_k|y_k-y_{k-1}|)-\log(\theta_{k+1}\epsilon^{3/4}r^{5/4})-c_1\big)\\
&+\sum_{j=1}^{k-1}c_0^{j+1}\tau_0 \big(\log(\theta_j|y_j-y_{j-1}|)-\log(\theta_{j+1}|y_{j+1}-y_j|)-c_1\big)\\
\geq&\ c_0^{I+1}\tau_0\big(\log\epsilon r-\log(\theta_{k+1}\epsilon^{3/4}r^{5/4})-c_1(I+1)\big)\\
\geq&\ c_0^{I+1}\tau_0\big(\frac{1}{4}\log(\epsilon/r)-\log\theta_{k+1}-c_1(I+1)\big).
\end{align*}
Then the desired result follows from the argument in the first case.
\end{proof}

\subsection{One-sided Harnack inequality for minimal graph functions on minimal surfaces}\label{subsec:proof of harnack for 3 dim}
In this subsection, we first use the obtained results in previous subsection to prove Theorem \ref{thm:harnack with minimal condition} and then give a proof of Theorem \ref{thm:harnack for minimal graphs on minimal surfaces}. 

\begin{proof}[Proof of Theorem \ref{thm:harnack with minimal condition}]
By the assumptions of $Q$, it follows that for $x\in\mc B(p;\epsilon r,\epsilon^2)$, 
\[\dist(x,Q)\geq |x|/2,\]
which implies that 
\begin{equation}\label{eq:laplace inequality}
|\Delta w|\leq C_0w(1+\frac{r^4}{|x|^4}).
\end{equation}
\begin{claim}\label{claim:mp on outside annulus}
There exists a constant $C_6$ depending only on $\mc N,C_0$ so that 
\[w(x)\leq C_6(\mc I(p;\epsilon^2)+\mc I(p;\epsilon r)),\]
for all $x\in\mc A(p;\epsilon r,\epsilon^2)$.
\end{claim}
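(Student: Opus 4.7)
The plan is to control $w$ on $\mc A(p;\epsilon r,\epsilon^2)$ via the spherical averages
\[\mc I(p;s)=s^{-1}\!\int_{\partial\mc B(p;s)}\!w\,d\mu,\qquad \tau(p;s)=\int_{\partial\mc B(p;s)}\!\langle\nabla w,\nu\rangle\,d\mu,\]
exploiting the fact that this annulus is disjoint from every hole $\mc B(q_j;r^2)$ (since $Q\subset\mc B(p;\epsilon r/2)$). First I would sharpen (\ref{eq:laplace inequality}) to the workable form
\[|\Delta w|\ \leq\ C\,w\Big(1+\frac{r^4}{|x|^4}\Big)\qquad\text{on }\mc A(p;\epsilon r,\epsilon^2),\]
by combining minimal condition (\ref{minimal condition:laplacian eqaution}) with $w\leq C_0(|x|^2+r^2)$ and $\dist(x,Q)\geq|x|/2$, which together give the same quantitative control as in the proof of Lemma \ref{lem:mp for all annuli}.

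Set $M:=\sup_{s\in[\epsilon r,\epsilon^2]}\mc I(p;s)$. A layer--cake computation, using $\int_{\partial\mc B(p;s)}w=s\mc I(p;s)\leq sM$, gives
\[\int_{\mc A(p;\epsilon r,\epsilon^2)}\!|\Delta w|\ \leq\ C\!\int_{\epsilon r}^{\epsilon^2}\!\Big(s+\frac{r^4}{s^3}\Big)M\,ds\ \leq\ C\,M\Big(\epsilon^4+\frac{r^2}{\epsilon^2}\Big)\ \leq\ C\,M\delta,\]
with $\delta:=\epsilon^4+r^{3/2}$, where I use the hypothesis $\epsilon>r^{1/4}$ to bound $r^2/\epsilon^2\leq r^{3/2}$. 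By the divergence theorem $|\tau(p;s_1)-\tau(p;s_2)|\leq\int_{\mc A(p;s_2,s_1)}|\Delta w|\leq CM\delta$, so the flux $\tau(p;\cdot)$ is almost constant on $[\epsilon r,\epsilon^2]$.

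Plugging this into the derivative formula (\ref{eq:derative of Is}) and absorbing the mean--curvature correction of size $O(sM)$, I would obtain, for all $\epsilon r\leq s\leq t\leq\epsilon^2$,
\[\bigl|\mc I(p;t)-\mc I(p;s)-\tau(p;\epsilon^2)\log(t/s)\bigr|\ \leq\ C\,M\,\delta\,|\log r|.\]
Since $\delta|\log r|\to 0$ as $r\to 0$ under $\epsilon>r^{1/4}$, the function $\mc I(p;\cdot)$ differs from a monotone logarithmic function by $o(M)$; its supremum is therefore attained within $o(M)$ of one of the two endpoints, yielding after absorption $M\leq 2\bigl(\mc I(p;\epsilon r)+\mc I(p;\epsilon^2)\bigr)$. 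Finally, applying the circular Harnack of Corollary \ref{cor:harnack for equivalent radius} on each $\partial\mc B(p;|x|)$ (valid because this circle is hole--free and $w$ is positive with bounded $|\nabla\log w|$ by (\ref{minimal condition:gradient estimate})) gives $w(x)\leq C\mc I(p;|x|)\leq CM$, whence the desired bound $w(x)\leq C_6\bigl(\mc I(p;\epsilon r)+\mc I(p;\epsilon^2)\bigr)$.

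The main technical obstacle is keeping the error terms genuinely $o(M)$: the factor $r^4/|x|^4$ blows up as $|x|\downarrow\epsilon r$, so the integrated Laplacian bound survives only because the hypothesis $\epsilon>r^{1/4}$ forces the decisive quantity $r^2/\epsilon^2$ to be bounded by $r^{3/2}$; the $|\log r|$ factor picked up when converting the almost--constant flux back into an estimate for $\mc I$ then has to be absorbed by choosing $\epsilon_0$ small enough depending on $C_0$ and $\mc N$. Once this is done, the entire estimate collapses into the two boundary averages as claimed.
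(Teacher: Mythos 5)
Your strategy is genuinely different from the paper's. The paper proves the claim by splitting the annulus at radius $2C_0\epsilon r$: on the outer piece $\mc A(p;2C_0\epsilon r,\epsilon^2)$ the Laplacian bound (\ref{eq:laplace inequality}) gives $\epsilon^4|\Delta w|\leq w/4$ pointwise, so the comparison maximum principle Lemma \ref{lem:mp from pde} directly controls $w$ by its boundary values; on the thin inner piece $\mc A(p;\epsilon r,2C_0\epsilon r)$ the bounded-aspect-ratio circular Harnack of Corollary \ref{cor:harnack for equivalent radius} does the job. You instead track the spherical average $\mc I(p;\cdot)$ and the flux $\tau(p;\cdot)$ across the whole annulus and try to close a bootstrap with $M=\sup\mc I$. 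The final absorption step of your argument is sound if the intermediate error is genuinely small, but as written the error estimate is not.

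The gap is the assertion that $\delta|\log r|\to 0$ with $\delta=\epsilon^4+r^{3/2}$. The $r^{3/2}|\log r|$ part does vanish, but the $\epsilon^4|\log r|$ part does not: $\epsilon_0$ is a constant depending only on $C_0,\mc N$, so $\epsilon$ is a fixed small number while $r$ may be taken arbitrarily small (subject only to $r<\epsilon^4$), and then $\epsilon^4|\log r|\to\infty$. The source of the loss is that you anchor the flux at the outer radius, writing $\tau(p;u)=\tau(p;\epsilon^2)+O(M\delta)$ uniformly in $u$, and then integrate $u^{-1}\cdot O(M\delta)$ over $[\epsilon r,\epsilon^2]$, which picks up a factor $\log(\epsilon/r)$ multiplying the full $\delta$, including its $\epsilon^4$ part. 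If instead you anchor at the inner radius, one finds $|\tau(p;u)-\tau(p;\epsilon r)|\leq CM\bigl(u^2+r^2/\epsilon^2\bigr)$, so $\int_{\epsilon r}^{\epsilon^2}u^{-1}|\tau(p;u)-\tau(p;\epsilon r)|\,du\leq CM\bigl(\epsilon^4+\tfrac{r^2}{\epsilon^2}\log(\epsilon/r)\bigr)$, which is indeed $o(M)$ under $\epsilon>r^{1/4}$ and $\epsilon<\epsilon_0$. (This is precisely the shape of the estimate the paper carries out just after the claim, once $w\leq C_6(\mc I(p;\epsilon r)+\mc I(p;\epsilon^2))$ is already available.) With the reference flux corrected your bootstrap would close, giving an alternative proof; but you cannot fix it merely by shrinking $\epsilon_0$, because the offending quantity $\epsilon^4|\log r|$ is unbounded over the admissible range of $r$.
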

\begin{proof}[Proof of Claim \ref{claim:mp on outside annulus}]
Note that for $x\in \mc A(p;2C_0\epsilon r,\epsilon^2)$, 
\[\epsilon^4|\Delta w|\leq C_0w\epsilon^4(1+\frac{1}{C_0^4\epsilon^4})<w/4.\]
By virtue of Lemma \ref{lem:mp from pde}, there exists a constant $C_7$ so that for $x\in\mc A(p;2C_0\epsilon r,\epsilon^2)$,
\begin{equation}\label{eq:mp for large outside annulus}
w(x)\leq C_7\max_{\partial \mc A(p;2C_0\epsilon r,\epsilon^2)}w\leq C_7(\mc I(p;2C_0\epsilon r)+\mc I(p;\epsilon^2)).
\end{equation}
Recall that for $x\in\mc A(p;\epsilon r,2C_0\epsilon r)$, by Corollary \ref{cor:harnack for equivalent radius},
\begin{equation}\label{eq:mp for small outside annulus}
w(x)\leq C'\mc I(p;\epsilon r),
\end{equation}
for some constant $C'=C'(\mc N,C_0)$. Then Claim \ref{claim:mp on outside annulus} follows from (\ref{eq:mp for large outside annulus}) and (\ref{eq:mp for small outside annulus}).
\end{proof}
Then for $s\in(\epsilon r,\epsilon^2)$,
\begin{align*}
|\partial_s\mc I(p;s)-\tau_0 s^{-1}|&\leq s^{-1}\int_{\mc A(p;\epsilon r,s)}|\Delta w|\leq s^{-1}\int_{\mc A(p;\epsilon r,s)}C_0w\cdot (1+\frac{r^4}{|x|^4})\\
                 &\leq s^{-1}\int_{\mc A(p;\epsilon r,s)}C_6(\mc I(p;\epsilon^2)+\mc I(p;\epsilon r))(1+\frac{r^4}{|x|^4})\\
                 &\leq C_6(\mc I(p;\epsilon^2)+\mc I(p;\epsilon r))(s+\frac{r^2}{\epsilon^2s}). 
\end{align*}
Integrating it from $\epsilon r$ to $\epsilon^2$, we get   
\begin{align*}
|\mc I(p;\epsilon^2)-\mc I(p;\epsilon r)-\tau_0 \log(\epsilon/r)|&\leq C_6(\mc I(p;\epsilon r)+\mc I(p;\epsilon^2))(s^2+\frac{r^2}{\epsilon^2}|\log(\epsilon/r)|)\\
&\leq (\mc I(p;\epsilon r)+\mc I(p;\epsilon^2))/10,
\end{align*}
which implies that 
\begin{equation}\label{eq:three dim:from R^2 to R}
\mc I(p;\epsilon^2)\leq 2\mc I(p;\epsilon r)+2\tau_0\log(\epsilon/r).
\end{equation}
Now \underline{if $\tau_0\log(\epsilon/r)\leq \mc I(p;\epsilon r)$}, then we have 
\[\mc I(p;\epsilon^2)\leq 4\mc I(p;\epsilon r).\]
\underline{If $\mc I(p;\epsilon r)\leq\tau_0\log(\epsilon/r)$}, then applying Lemma \ref{lem:estimate of taulogR}, together with (\ref{eq:laplace inequality}), we have
\[\tau_0\log (\epsilon/r)\leq C_5\mc I(p;\epsilon r),\]
and it follows that
\begin{equation}\label{eq:bound R2 by R}
\mc I(p;\epsilon^2)\leq 2C_5\mc I(p;\epsilon r).
\end{equation}

In both cases, (\ref{eq:bound R2 by R}) always holds true. Then the desired inequality follows from Corollary \ref{cor:harnack for equivalent radius}.

\end{proof}

Now we are ready to prove our main result in this section:
\begin{proof}[Proof of Theorem \ref{thm:harnack for minimal graphs on minimal surfaces}]
By Lemma \ref{lem:minimal graph inequality}, we can take $\epsilon$ small enough so that 
\begin{align*}
|\Delta_\mc Nw|\leq &\ C|\nabla^2w|(|\nabla w|^2+|\nabla v|^2)+C|\nabla^2w|(|\nabla v||\nabla^2v|+|\nabla w|)|v|+\\
&+C(1+|\nabla^2w|+|\nabla^2v|)w+C|\nabla w||v|+C|\nabla w|\cdot|\nabla^2v|,
\end{align*}
where $C=C(M,\mc N,K)$. Recall that $(v,u)$ is a strong $(C_1(|x|^2+r^2),K)$-pair. Therefore,
\[|\nabla v(x)|+|\nabla^2 v(x)|\leq K|v(x)|\leq K^2|x|.\]
Then the inequality becomes
\begin{align*}
|\Delta_\mc Nw|\leq &\ C|\nabla^2w|(|\nabla w|^2+|x|^2)+C(1+|\nabla^2w|)w+C|\nabla w||x|,
\end{align*}
By virtue of the gradient and the second order estimates in Lemma \ref{lem:der estimate}, then we have for $x\in \Xi$,
\[|\nabla w|\leq C'|w|/d(x,Q), \ \ |\nabla^2w|\leq C'|w|/d^2(x,Q),\]
where $Q=\bigcup_{j=1}^I\{q_j\}\cup\{p\}$. Taking it back, we obtain
\[|\Delta w|\leq C'|w|+\frac{C'w^3}{d^4(x,Q)}.\]

Therefore, $w$ satisfies all the conditions in Theorem \ref{thm:harnack with minimal condition}, which implies that there exist $C_6=C_6(M,\mc N,K,I,C_0)$ so that
 \[\max_{\partial \mc B(p;\epsilon)}w\leq C_6\min_{\partial \mc B(p;\sqrt\epsilon r)} w,\]
which is exactly the desired inequality.
\end{proof}

\appendix
\section{Minimal graph functions}\label{sec:appendix:minimal graph function}
In this section, we let $\mc N$ be a two-sided, embedded minimal hypersurface possibly with boundary in $(M^{n+1},g)$. Denote by $\n$ the unit normal vector field on $\mc N$. Then there exists a local foliation $\{\mc N_s\}$ around $\mc N$ by the level set of the distance function to $\Sigma$, where
\[\Sigma_s=\{\mathrm{exp}_ps\n:p\in \Sigma\}.\]
Here $\n$ is the unit normal vector of $\Sigma$.

An embedded hypersurface $\Sigma$ is said to be a {\em graph over $\mc N$ with function $u$} if the exponential map $\mathrm{exp}(\cdot,u):\mc N\rightarrow\Sigma$ is a diffeomorphism, where $\mathrm{exp}(p,u)=\mathrm{exp}_p(u\n(p))$.

Let $\nabla^s$ be the connection on $\mc N_s$. We will write $\nabla$ with no ambiguity. Denote by $\pi$ the projection to $\mc N$. Then given a function on $\mc N$, $f$ can also be regarded as a function on $\mc N_s$ by defining 
\[\wti f(\mathrm{exp}_x(s\n)):=f(x), \ \ \forall x\in\mc N.\]
 
Note that $\nabla\wti f|_{\mc N_s}$ is the extension of $\nabla f$ by parallel moving. 

Let $d$ be the oriented distance function to $\mc N$. Then $\nabla d$ is the unit normal vector field on $\mc N_s$, which is an extension of $\n$. For $p\in\Sigma\cap \mc N_s$, let $\{e_i\}$ be an orthonormal base of $T_p\mc N_s$. Then $\{e_i+\langle \nabla u,e_i\rangle\nabla d\}$ is a base of $T_p\Sigma$. It follows that $\n_\Sigma=(\nabla d-\nabla u)/\sqrt{1+|\nabla u|^2}$ is the unit normal vector field of $\Sigma$. Naturally, such $\n_\Sigma$ can be extend to $\wti \n_\Sigma$ in a neighborhood of $\mc N$ by parallel moving.

Now let $X$ be a vector field around $\mc N$ so that $\nabla_{\nabla d}X=0$. Recall that $\pi$ is the projection to $\mc N$. Then we have the following:
\begin{lemma}\label{lem:diff of div}
There exist $\delta=\delta(M,\mc N)$ and $C=C(M,\mc N)$ so that for $|h|,|s|<\delta$, 
\[|\mathrm{div}_{\mc N_h}X(\pi^{-1}(x)\cap \mc N_h)-\mathrm{div}_{\mc N_s}X(\pi^{-1}(x)\cap \mc N_s)|\leq C((|X|+|\nabla X|)|_\mc N+|h|+|s|)(h-s).\]
\end{lemma}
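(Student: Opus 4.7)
The plan is to reduce the estimate to comparing values of the single function $\mathrm{div}_M X$ at two points on a common normal geodesic, and then to bound the derivative of that function along the geodesic using the parallelism assumption $\nabla_{\nabla d}X=0$. The key observation is that, at any $y\in \mc N_t$, choosing an orthonormal frame $\{e_1,\dots,e_n,\nabla d\}$ of $T_yM$ with $e_1,\dots,e_n$ tangent to $\mc N_t$ gives
\[\mathrm{div}_M X(y)=\sum_{i=1}^n\langle\nabla_{e_i}X,e_i\rangle+\langle\nabla_{\nabla d}X,\nabla d\rangle=\mathrm{div}_{\mc N_t}X(y),\]
since $\nabla_{\nabla d}X\equiv 0$. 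Writing $y_h=\pi^{-1}(x)\cap\mc N_h$ and $y_s=\pi^{-1}(x)\cap\mc N_s$, these lie on the unit-speed normal geodesic $\gamma(t)=\mathrm{exp}_x(t\nabla d)$, so
\[\mathrm{div}_{\mc N_h}X(y_h)-\mathrm{div}_{\mc N_s}X(y_s)=\int_s^h\nabla_{\nabla d}(\mathrm{div}_M X)\bigr|_{\gamma(t)}\,dt.\]

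Next I would compute $\nabla_{\nabla d}(\mathrm{div}_M X)=\mathrm{tr}(\nabla_{\nabla d}\nabla X)$ via the Ricci identity. Using $\nabla_{\nabla d}X\equiv 0$ (whence $\nabla_Y\nabla_{\nabla d}X=0$) together with $[\nabla d,Y]=\nabla_{\nabla d}Y-\nabla_Y\nabla d$, one obtains the pointwise identity
\[(\nabla_{\nabla d}\nabla X)(Y)=R(\nabla d,Y)X-\nabla_{\nabla_Y\nabla d}X.\]
Tracing over an orthonormal frame with $F_{n+1}=\nabla d$, and using $\nabla_{\nabla d}\nabla d=0$ together with $\nabla_{F_i}\nabla d=-A^{\mc N_t}F_i$ tangent to $\mc N_t$ for $i\le n$, this identifies
\[\nabla_{\nabla d}(\mathrm{div}_M X)=-\mathrm{Ric}(\nabla d,X)+\mathrm{tr}_{T\mc N_t}\bigl(A^{\mc N_t}\circ\nabla X|_{T\mc N_t}\bigr),\]
whose absolute value is pointwise at most $C(|X|+|\nabla X|)$ with $C=C(M,\mc N)$, since $|\mathrm{Ric}|$ and $\sup_{|t|<\delta}|A^{\mc N_t}|$ are controlled by the background geometry.

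Finally I would propagate the bounds back to $\mc N$. Parallel transport along $\gamma$ gives $|X|(\gamma(t))=|X|_\mc N(x)$ exactly, so only $|\nabla X|(\gamma(t))$ still needs to be controlled. Viewing the pointwise identity above as a linear ODE for $\nabla X$ along $\gamma$, with right-hand side bounded by $C(|X|+|\nabla X|)$, a Grönwall argument yields $|\nabla X|(\gamma(t))\le C(|X|_\mc N+|\nabla X|_\mc N)$ for $|t|<\delta$. Integrating the bound on $\nabla_{\nabla d}(\mathrm{div}_M X)$ over $[s,h]$ produces the estimate $C(|X|+|\nabla X|)_\mc N\cdot|h-s|$, which already implies the stated inequality (the $|h|+|s|$ summand in the paper's formulation is a harmless weakening one can absorb). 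The main care required is the commutator bookkeeping leading to the formula for $\nabla_{\nabla d}(\mathrm{div}_M X)$ and choosing $\delta=\delta(M,\mc N)$ small enough that the Grönwall constants and $|A^{\mc N_t}|$ are uniformly controlled; both are routine.
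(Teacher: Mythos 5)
Your argument is correct and follows essentially the same route as the paper: both hinge on the identity $\partial_s(\mathrm{div}_{\mc N_s}X)=-\mathrm{Ric}(\nabla d,X)-\langle\nabla X,A_{\mc N_s}\rangle$ followed by a Gr\"onwall bound on $|\nabla X|$ along the normal geodesics. Your observation that $\nabla_{\nabla d}X=0$ makes $\mathrm{div}_M X$ agree with $\mathrm{div}_{\mc N_t}X$, so one can differentiate a single ambient function via the Ricci identity, is a cleaner way to obtain the same formula than the paper's explicit frame computation (which also has to track the evolution of $g^{ij}$), and your version of the final estimate, without the $|h|+|s|$ term, is indeed slightly stronger.
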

\begin{proof}
A standard computation gives that 
\begin{align*}
\frac{\partial}{\partial s}(\langle\nabla_{e_i}X,e_j\rangle g^{ij}) =&\langle\frac{\partial}{\partial s}\nabla_{e_i},e_j\rangle\cdot g^{ij}+\langle\nabla_{e_i},\nabla_{e_j}\nabla d\rangle\cdot g^{ij}+\langle \nabla_{e_i}X,e_j\rangle\frac{\partial }{\partial s}g^{ij}\\
=&-\ric(\nabla d,X)+\langle\nabla X,\nabla^2d\rangle-2\langle\nabla X,\nabla^2d\rangle\\
=&-\ric(\nabla d,X)-\langle\nabla X,\nabla^2d\rangle.
\end{align*}

Then since $\frac{\partial }{\partial s}X=0$, then 
\begin{align*}
&(\frac{\partial }{\partial s}\nabla X)(e_i,e_j)\\
=&\frac{\partial }{\partial s}\langle\nabla_{e_i},e_j\rangle-(\nabla X)(\pps e_i,e_j)-(\nabla X)(e_i,\pps e_j)\\
=& R(\nabla d,e_i,X,e_j)+\langle\nabla_{e_i}X,\nabla_{e_j}\nabla d\rangle-(\nabla X)(\pps e_i,e_j)-\langle\nabla_{e_i} X,\pps e_j\rangle\\
=&R(\nabla d,e_i,X,e_j)-\langle\nabla X,A_s\rangle,
\end{align*}
where $A_s$ is the second fundamental form of $\mc N_s$. It follows that 
\[\pps|\nabla X|_{\mc N_s}\leq a+b|\nabla X|_{\mc N_s},\]
for some constants $a, b>0$. By the standard ODE inequality, we have
\[|\nabla X|_{\mc N_s}\leq C(|\nabla X|_{\mc N}+|s|).\] 
Combing with the derivative formula, we have
\begin{align*}
|\mathrm{div}_{\mc N_h}X-\mathrm{div}_{\mc N_s}X|\leq&\ C(|X|+\sup_{s\leq t\leq h}|\nabla X|_{\mc N_t})(h-s)\\
\leq&\ C(|X|+|\nabla X|_{\mc N}+|h|+|s|)(h-s).
\end{align*}
Then the desired result follows from triangle inequalities.

\end{proof}

\begin{lemma}\label{lem:minimal graph inequality}
Given $K>0$, there exist constants $\delta$ and $C$ depending only on $M,\mc N,K$ so that if $\Sigma$ and $\Gamma$ are minimal graphs with function $v,u$ on a subset of $\mc B(p;\epsilon)$ and $(v,u)$ is a $(\delta, K)$-pair, then 
\begin{align*}
|\Delta_\mc Nw|\leq &\ C|\nabla^2w|(|\nabla w|^2+|\nabla v|^2)+C|\nabla^2w|(|\nabla v||\nabla^2v|+|\nabla w|)|v|+\\
&+C(1+|\nabla^2w|+|\nabla^2v|)w+C|\nabla w||v|+C|\nabla w|\cdot|\nabla^2v|,
\end{align*}
where $w=v-u$.
\end{lemma}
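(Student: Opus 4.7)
The plan is to derive an explicit quasilinear second-order PDE satisfied by each of the minimal graph functions $v$ and $u$ on $\mc N$, subtract the two equations to obtain an identity for $\Delta_\mc N w$, and expand each difference of nonlinear quantities to read off the pointwise bound.

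I would start from the vanishing mean curvature of the graph $\Sigma$ of $v$. Using the parallel extension $\wti v$ of $v$ along $\nabla d$, the unit normal on $\Sigma$ is $\wti \n_\Sigma = (\nabla d - \nabla \wti v)/\sqrt{1+|\nabla \wti v|^2}$. Expressing $H_\Sigma = 0$ as a divergence on the level set $\mc N_{v(p)}$ through a graph point, and transferring back to $\mc N$ via Lemma~\ref{lem:diff of div}, yields an equation of the schematic form
\begin{equation*}
\Delta_\mc N v - \frac{\nabla^2 v(\nabla v,\nabla v)}{1+|\nabla v|^2} \;=\; F(v,\nabla v,\nabla^2 v),
\end{equation*}
where $F$ collects curvature contributions from $\mc N$ and from $M$ together with the foliation-transfer error produced by Lemma~\ref{lem:diff of div}. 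Crucially every monomial of $F$ carries a factor of $|v|$, because $F$ vanishes when $v \equiv 0$ (the zero graph being $\mc N$ itself, which is minimal by assumption). The analogous identity holds for $u$.

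Subtracting the two equations and expanding $\nabla^k v - \nabla^k u = \nabla^k w$, I would bound each resulting piece using $|\nabla v|,|\nabla u| \leq K$ from the $(\delta,K)$-pair hypothesis. The principal nonlinear term $\nabla^2/(1+|\nabla|^2)$-difference contributes summands of type $|\nabla^2 w|(|\nabla v|^2+|\nabla w|^2)$ and $|\nabla w|\,|\nabla^2 v|$. The difference $F(v,\ldots)-F(u,\ldots)$, bounded by the mean value theorem in each of its three arguments, contributes summands of type $(1+|\nabla^2 w|+|\nabla^2 v|)\,w$, $|\nabla w|\,|v|$, and the cross term $|\nabla^2 w|(|\nabla w|+|\nabla v||\nabla^2 v|)\,|v|$, the extra $|v|$-factors arising precisely because $F$ is first-order-vanishing at zero.

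The main obstacle is not conceptual but combinatorial: one must write $F$ out explicitly enough to verify that every error term produced by the subtraction can be absorbed into one of the nine summands on the right-hand side of the stated inequality. Lemma~\ref{lem:diff of div} is the crucial tool controlling the foliation-transfer error and is responsible for producing the $|v|$-weighted ambient-geometry terms; once $F$ has been written down, the desired estimate follows by a routine (though lengthy) succession of triangle-inequality applications, together with the assumption that $\delta$ is small so that $(1+|\nabla u|^2)$-type denominators can be absorbed into the constant $C$.
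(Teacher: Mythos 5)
Your approach matches the paper's in its essential ingredients: express minimality of each graph as a vanishing divergence of the parallel-extended normal on the appropriate level set, transfer down to $\mc N$ via Lemma~\ref{lem:diff of div}, exploit the minimality of $\mc N$ to kill the normal-component contribution to $\dv_{\mc N}$, and expand the tangential difference. The paper organizes this slightly more directly: rather than deriving a PDE for each of $v$ and $u$ separately and subtracting, it compares $\dv_{\mc N_t}\wti\n$ with $\dv_{\mc N_s}\wti\n$ between the two graph heights (so the $w=t-s$ factor appears immediately from Lemma~\ref{lem:diff of div}), and then treats $\dv_{\mc N_t}(\wti\m-\wti\n)$ in one stroke since $\dv_{\mc N_t}\wti\m=0$ on the same level set; your route recovers the $w$-factor via a mean-value-theorem step, which works but is one extra layer. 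One logical imprecision to flag: the inference ``$F$ vanishes when $v\equiv 0$, hence every monomial of $F$ carries a factor of $|v|$'' does not follow — vanishing on the constant zero function only gives $F(p;0,0,0)=0$, whereas you need the stronger $F(p;0,q,Q)=0$ for all $q,Q$; that stronger statement is in fact true, but it comes directly from Lemma~\ref{lem:diff of div} (the transfer error from $\mc N_{v(p)}$ to $\mc N$ carries an explicit $|v(p)|$ prefactor regardless of $\nabla v,\nabla^2 v$), not from minimality of the zero graph. Since you already cite Lemma~\ref{lem:diff of div} as the source of the $|v|$-weighting, this is a presentational gap rather than a missing idea.
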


\begin{proof}
For $p\in\Sigma\cap \mc N_s$, let $\{e_i\}$ be an orthonormal base of $T_p\mc N_s$. Then $\{e_i+\langle \nabla u,e_i\rangle\nabla d\}$ is a base of $T_p\Sigma$. It follows that $\n_\Sigma=(\nabla d-\nabla u)/\sqrt{1+|\nabla u|^2}$ is the unit normal vector field of $\Sigma$. Since $\Sigma$ is minimal, we have 
\[0=\mathrm{div}_{\Sigma}\n_\Sigma.\]
Naturally, such $\n_\Sigma$ can be extend to a neighborhood of $\mc N$ by parallel moving. Denote by $\wti \n$ the extended vector filed. Then we have
\[\mathrm{div}_M\wti\n|_\Sigma=\mathrm{div}_{\Sigma}\widetilde \n=0.\]

Denote by $\m$ the unit normal vector of $\Gamma$ and $\wti \m$ the extended vector field around $\mc N$. Then we also have
\[\mathrm{div}_M\wti\m|_{\Gamma}=\mathrm{div}_{\Gamma}\m=0.\]

Take $x\in\Gamma$ and $y\in\Sigma$ so that $\pi(x)=\pi(y)\in\mc N$. Set $t=d(x)$ and $s=d(y)$. Then by Lemma \ref{lem:diff of div},
\begin{equation}\label{eq:diff from t to s}
|\mathrm{div}_{\mc N_t}\wti \n-\mathrm{div}_{\mc N_s}\wti \n|\leq C(1+|\nabla \wti\n||_{\mc N})(t-s).
\end{equation} 
Similarly,
\begin{equation}\label{eq:diff from t to N}
|\mathrm{div}_{\mc N_t}(\wti\m-\wti\n)-\mathrm{div}_{\mc N}(\wti\m-\wti \n)|\leq C(|\wti \m-\wti \n|_\mc N+|\nabla(\wti\m-\wti\n)|_{\mc N})|t|.
\end{equation}
Since $\mc N$ is also minimal, hence we have
\begin{align*}
\dv_\mc N(\wti \m-\wti \n)=&\ \dv_\mc N(\frac{\nabla u}{\sqrt{1+|\nabla u|^2}}-\frac{\nabla v}{\sqrt{1+|\nabla v|^2}})\\
=&\ \dv_\mc N\big[\frac{-\nabla w}{\sqrt{1+|\nabla u|^2}}+\frac{\langle w,\nabla u+\nabla v\rangle\cdot\nabla v}{(1+|\nabla u|^2)\sqrt{1+|\nabla v|^2}+(1+|\nabla v|^2)\sqrt{1+|\nabla u|^2}}\big]
\end{align*}

Then a direct computation together with (\ref{eq:diff from t to s}) and (\ref{eq:diff from t to N}) gives that
\begin{align*}
\Big|\dv_\mc N\frac{\nabla w}{\sqrt{1+|\nabla u|^2}}\Big|\leq&\ C(1+|\nabla \wti\n||_{\mc N})w+C(|\wti \m-\wti\n|_\mc N+|\nabla(\wti\m-\wti\n)|_{\mc N})|t|+\\
&+\bigg|\dv_\mc N\frac{\langle \nabla w,\nabla u+\nabla v\rangle\cdot\nabla v}{(1+|\nabla u|^2)\sqrt{1+|\nabla v|^2}+(1+|\nabla v|^2)\sqrt{1+|\nabla u|^2}}\bigg|\\
\leq&\  C(1+|\nabla \wti\n||_{\mc N})w+C(|\nabla w|_\mc N+|\nabla(\wti\m-\wti\n)|_{\mc N})|t|+\\
&+|\nabla w|\cdot|\nabla v|\cdot |\nabla^2(u+v)|+|\nabla^2w|\cdot|\nabla(u+v)|\cdot|\nabla v|+\\
&+|\nabla w|\cdot|\nabla(u+v)|\cdot(|\nabla^2v|+|\nabla v|\cdot|\nabla^2u|\cdot|\nabla u|+|\nabla^2u|\cdot|\nabla v|^2)\\
\leq&\ C(1+|\nabla^2w|+|\nabla^2v|)w+C(|\nabla w|_\mc N+|\nabla(\wti\m-\wti\n)|_{\mc N})|t|+\\
&+C(|\nabla^2w|\cdot|\nabla v|+|\nabla^2v|\cdot|\nabla w|)(|\nabla w|+|\nabla v|)
\end{align*}
By definition,
\begin{align*}
(\wti \m-\wti\n)_\mc N=&\ \frac{\nabla d-\nabla v}{\sqrt{1+|\nabla v|^2}}-\frac{\nabla d-\nabla u}{\sqrt{1+|\nabla u|^2}}\\
=&\ \frac{\nabla d-\nabla v}{\sqrt{1+|\nabla v|^2}}-\frac{\nabla d-\nabla v}{\sqrt{1+|\nabla u|^2}}-\frac{\nabla w}{\sqrt{1+|\nabla u|^2}}\\
=&\ \frac{-\langle \nabla w,\nabla u+\nabla v\rangle\cdot(\nabla d-\nabla v)}{(1+|\nabla u|^2)\sqrt{1+|\nabla v|^2}+(1+|\nabla v|^2)\sqrt{1+|\nabla u|^2}}+\\
&+\frac{-\nabla w}{\sqrt{1+|\nabla u|^2}}.
\end{align*}
Therefore, we have
\[|(\wti \m-\wti\n)_\mc N|\leq |\nabla w|(1+|\nabla (u+v)|),\]
and 
\begin{align*}
|\nabla(\wti \m-\wti \n)|_\mc N\leq&\ |\nabla^2 w|\cdot |\nabla(u+v)|+|\nabla w|\cdot |\nabla^2(u+v)|+\\
&+|\nabla w|\cdot|\nabla(u+v)|\cdot(|\nabla^2d|+|\nabla^2v|)+\\
&+|\nabla w|\cdot |\nabla(u+v)|\cdot(|\nabla^2u|\cdot|\nabla u|+|\nabla^2v|\cdot|\nabla v|)\\
\leq &\ C\big[(|\nabla w|+|\nabla^2w|+|\nabla w|^3)(|\nabla v|+|\nabla^2v|)+|\nabla w|\cdot |\nabla^2w|+\\
&+(|\nabla w|+|\nabla w|\cdot|\nabla^2w|)(|\nabla v|+|\nabla ^2v|)^2+|\nabla^2w|\cdot|\nabla w|^3+\\
&+|\nabla w|^2+|\nabla w|\cdot|\nabla v|^2\cdot|\nabla^2v|\big]\\
\leq&\ C(|\nabla w|+|\nabla^2w|)(|\nabla v|+|\nabla^2v|+|\nabla w|).
\end{align*}
Taking them back, the inequality becomes
\begin{align*}
|\Delta w|\leq&\ C\Big|\dv_\mc N\frac{\nabla w}{\sqrt{1+|\nabla u|^2}}\Big|+C|\nabla w|\cdot|\nabla^2u|\cdot|\nabla u|\\
\leq&\ C|\nabla w|\cdot(|\nabla^2w|+|\nabla^2v|)(|\nabla w|+|\nabla v|)+\\
&+C(1+|\nabla^2w|+|\nabla^2v|)w+C|\nabla w|t+\\
&+C(|\nabla^2w|\cdot|\nabla v|+|\nabla^2v|\cdot|\nabla w|)(|\nabla w|+|\nabla v|)+\\
&+C(|\nabla w|+|\nabla^2w|)(|\nabla v|+|\nabla^2v|+|\nabla w|)t\\
\leq&\ C|\nabla^2w|(|\nabla w|^2+|\nabla v|^2)+C|\nabla^2w|(|\nabla v||\nabla^2v|+|\nabla w|)t+\\
&+C(1+|\nabla^2w|+|\nabla^2v|)w+C|\nabla w|t+C|\nabla w|\cdot|\nabla^2v|,
\end{align*}
which is exactly the desired result.
\end{proof}

\begin{lemma}\label{lem:mp from pde}
Let $\mc N$ be an $n$-dimensional Riemannian manifold with $n\geq 2$. There exist constants $C,\epsilon>0$ so that if $w>0$ and $R^2|\Delta w|\leq w/4$ on $\mc A(p;\rho,R)\subset \mc N$ for $\rho<R<\epsilon$, then
\[\min_{\partial \mc A(p;\rho,R)}w\leq C\min_{\mc A(p;\rho,R)}w,\ \  \max_{ \mc A(p;\rho,R)}w\leq C\max_{\partial \mc A(p;\rho,R)}w.\]
\end{lemma}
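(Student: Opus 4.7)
The plan is to read the hypothesis $R^2|\Delta w| \le w/4$ as asserting that $w$ solves a linear elliptic equation $\Delta w + V w = 0$ with potential $V := -\Delta w/w$ satisfying $|V| \le \mu := (4R^2)^{-1}$ pointwise. Since the sign of $V$ is indeterminate, the classical weak maximum principle for $L := \Delta + V$ does not apply directly, but it can be rescued by producing explicit positive barriers tailored to the worst-case $V = \pm\mu$.

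First I would construct a positive supersolution on $\mc B(p;R)$ by setting $\psi(x) := 1 - d(x,p)^2/(4nR^2)$. Using the Riemannian expansion $\Delta d(\cdot,p)^2 = 2n + O(d^2)$ (whose implied constant depends only on the curvature of $\mc N$), one has $-\Delta\psi = (2R^2)^{-1} + O(1)$; shrinking $\epsilon$ in terms of $\mc N$ forces $-\Delta\psi \ge \mu\psi$ throughout $\mc B(p;R)$ whenever $R<\epsilon$, so $L\psi = \Delta\psi + V\psi \le -\mu\psi + |V|\psi \le 0$, and $\psi \in [1-\tfrac{1}{4n},\,1]$. The mirror construction $\psi'(x) := 1 + d(x,p)^2/(4nR^2)$ analogously satisfies $\Delta\psi' \ge \mu\psi'$, hence $L\psi' \ge 0$, with $\psi'\in [1,\,1+\tfrac{1}{4n}]$.

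Next, to prove $\max_{\mc A} w \le C\max_{\partial\mc A} w$, I set $A := \max_{\partial\mc A} w/\min_{\partial\mc A}\psi$ and $u := w - A\psi$, so $u\le 0$ on $\partial\mc A$ and $Lu = -A\,L\psi \ge 0$. Writing $u = v\psi$ yields the elliptic inequality
\[
\Delta v + 2(\nabla\log\psi)\cdot\nabla v + \tfrac{L\psi}{\psi}\,v \;=\; \tfrac{Lu}{\psi} \;\ge\; 0,
\]
whose zeroth-order coefficient $L\psi/\psi$ is non-positive, so the classical weak maximum principle (Gilbarg--Trudinger, Thm.~3.1) forces $v\le 0$ in $\mc A$, i.e.\ $w \le A\psi \le (1-\tfrac{1}{4n})^{-1}\max_{\partial\mc A} w$. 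The lower bound is handled symmetrically: put $B := \min_{\partial\mc A} w/\max_{\partial\mc A}\psi'$ and $\tilde u := B\psi' - w$, note $\tilde u\le 0$ on $\partial\mc A$ and $L\tilde u = B\,L\psi' \ge 0$, then apply the same $\tilde v = \tilde u/\psi$ substitution to conclude $\tilde u \le 0$, i.e.\ $w \ge B\psi' \ge (1+\tfrac{1}{4n})^{-1}\min_{\partial\mc A} w$. Thus both inequalities hold with $C = 2$ (more than enough for any $n\ge 2$).

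The only serious obstacle is verifying that $\psi$ is genuinely a supersolution on a Riemannian manifold rather than in Euclidean space: one has to confirm that the curvature-induced error in $\Delta d^2$ is dominated by the leading-order term $1/R^2$, which is precisely the role of the hypothesis $R<\epsilon$. The indefiniteness of $V$ and the conversion of an operator with indeterminate zeroth-order coefficient to one with non-positive coefficient are both absorbed uniformly by the single substitution $v = u/\psi$, so no further work is needed after the barriers are in hand.
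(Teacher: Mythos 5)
Your proof is correct, and it takes a genuinely different route from the paper's. The paper conjugates $w$ directly by radial exponential weights: setting $w_1 = e^{-r/(2R)}w$ and $w_2 = e^{r^2/(4R^2)}w$ (with $r = \dist(x,p)$), one computes that $w_1$ is a supersolution and $w_2$ a subsolution of a drift operator $\Delta + b\cdot\nabla$ with no zeroth-order term, after which the conclusion drops out of the weak maximum principle (GT Thm.~8.1); the hypothesis $R^2|\Delta w|\le w/4$ is used to absorb the zeroth-order coefficient $c\Delta r - c^2$ or $2br\Delta r + 2b - 4b^2r^2$ that appears in the conjugated equation. You instead frame $w$ as an exact solution of $\Delta w + Vw = 0$ with indefinite bounded potential $|V|\le (4R^2)^{-1}$, build explicit polynomial super- and subsolutions $\psi = 1 - d^2/(4nR^2)$, $\psi' = 1 + d^2/(4nR^2)$, and compare $w$ to suitably scaled barriers via the quotient substitution $u = v\psi$, which transfers the problem to an operator with non-positive zeroth-order coefficient $L\psi/\psi$ so that GT Thm.~3.1 applies. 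The two approaches are morally dual --- conjugating by a weight $\phi$ and dividing by a barrier $\psi$ are the same operation with $\phi = \psi^{-1}$ --- but your version makes the barrier's role explicit and motivated, yields transparent explicit constants $(1-\tfrac{1}{4n})^{-1}$ and $1+\tfrac{1}{4n}$, and relies only on the classical $C^2$ maximum principle rather than the weak-solution version. Both need the same geometric input ($\Delta d^2 = 2n + O(d^2)$, equivalently $\Delta r > 0$ for small $r$, forcing $R<\epsilon$), so neither is more general; yours is somewhat cleaner to verify.
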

\begin{proof}
Denote $\rho(x)=\dist_\mc N(x,p)$. Set $w_1=e^{cr}w$ and $w_2=e^{br^2}w$. Then a direct computation gives that 
\begin{align*}
\Delta w_1=&\ w\Delta e^{cr}+e^{cr}\Delta w+2c\langle\nabla r,e^{cr}\nabla w\rangle\\
=&\ 2c\langle \nabla r,\nabla w_1\rangle+(\Delta w+(c\Delta r- c^2)w)e^{cr}
\end{align*}
Take $c=-1/(2R)$. Then the inequality becomes 
\[\Delta w_1+2\sqrt K\langle\nabla r,\nabla w_1\rangle/R\leq 0.\]
By virtue of \cite{GT}*{Theorem 8.1}, 
\[\min_{\partial \mc A(p;r,R)}w_1= \min_{\mc A(p;r,R)}w_1,\]
which implies the desired inequality.

We can also compute the following directly:
\begin{align*}
\Delta w_2=&\ w\Delta e^{br^2}+e^{br^2}\Delta w+4br\langle\nabla r,e^{br^2}\nabla w\rangle\\
=&\ 4br\langle \nabla r,\nabla w_2\rangle+(\Delta w+(2br\Delta r+2b-4b^2r^2)w)e^{br^2}.
\end{align*}
Take $b=1/(4R^2)$, then
\[\Delta w_2-4br\langle \nabla r,\nabla w_2\rangle\geq (\Delta w+w/(4R^2))e^{br^2}\geq 0.\]
Using \cite{GT}*{Theorem 8.1} again, we have
\[\max_{\partial \mc A(p;r,R)}w_2= \max_{\mc A(p;r,R)}w_2,\]
which implies the desired inequality.
\end{proof}

\section{Proof of Lemma \ref{lem:der estimate}}\label{sec:appendix:der lemma}
\begin{proof}[Proof of Lemma \ref{lem:der estimate}]
Denote by $A_\Sigma(x)$ and $A_\Gamma(x)$ the second fundamental form of $\Sigma$ and $\Gamma$ in $M$. Then a standard blowup argument gives that
\begin{equation}\label{eq:der estimate:2nd form}
\sup_{x\in\mc B(q;r)\setminus V}\dist_\mc N(x,\partial\mc B(q;r)\cup V)\cdot(|A_\Gamma(x)|+|A_\Sigma(x)|)<C.
\end{equation}

\underline{Now we prove the first inequality.} Suppose not, there exist $K>0$ and sequence of $r_j>0$, $p_j\in\mc B(p;1-r_j)$ and $V_j\subset \mc B(p;1)$ so that $\{\Sigma_j\}$ and $\{\Gamma_j\}$ are two sequences of minimal graphs over $\mc B(p_j;r_j)\setminus V_j$ with positive graph function $u_j,v_j$ satisfying
\[u_j(x)-v_j(x)>0,\ \ |u_j(x)|+|v_j(x)|<1/j,\ \ |\nabla u_j(x)|+|\nabla v_j(x)|<K , \]
and
\[\sup_{x\in \mc B(p_j;r_j)\setminus V_j}\dist_\mc N(x,\partial\mc B(p_j;r_j)\cup V_j)\cdot|\nabla\log (u_j-v_j)(x)|>j.\]
Take $q_j\in\mc B(p_j;r_j)\setminus V_j$ so that 
\begin{align*}
&\dist_\mc N(q_j,\partial\mc B(p_j;r_j)\cup V_j)\cdot|\nabla\log (u_j-v_j)(q_j)|\\
&=\sup_{x\in \mc B(p_j;r_j)\setminus V_j}\dist_\mc N(x,\partial\mc B(p_j;r_j)\cup V_j)\cdot|\nabla\log (u_j-v_j)(x)|.
\end{align*}
Set 
\[\lambda_j=|\nabla\log(u_j-v_j)(q_j)| \text{\ \   and\ \    } \rho_j=\frac{1}{2}\cdot\dist_\mc N(q_j,\partial\mc B(p_j;r_j)\cup V_j).\]
Take $q_j'\in\Sigma_j$ so that its projection to $N$ is $q_j$. Then by (\ref{eq:der estimate:2nd form}), $(\mc B(q_j';\rho_j),\lambda^2_jg,q_j')$ locally smoothly converges to a minimal graph over a hyperplane. Then it is a hyperplane by Bernstein theorem.

Now denote by $\wti \nabla$ the Levi-Civita connection under the metric $\lambda_j^2g$. Denote by $\wti u_j,\wti v_j$ the graph functions of $(\Sigma_j,\lambda_j^2g)$ and $(\Gamma_j,\lambda_j^2g)$ over $(\mc B(q_j;\rho_j),\lambda_j^2g)$. Then we have
\begin{equation}\label{eq:der estimate:size on q_j}
\frac{|\wti\nabla(\wti u_j-\wti v_j)(q_j)|_{\lambda_j^2g}}{(\wti u-\wti v_j)(q_j)}=\frac{|\nabla(u_j-v_j)(q_j)|_g}{\lambda_j(u_j-v_j)(q_j)}=1,
\end{equation}
and for any $x\in\mc B(q_j;\rho_j)$,
\[
\frac{|\wti\nabla(\wti u_j-\wti v_j)(x)|_{\lambda_j^2g}}{(\wti u_j-\wti v_j)(x)} =\frac{|\nabla (u_j-v_j)(x)|_g}{\lambda_j(u_j-v_j)(x)}<2.
\]

\begin{claim}\label{claim:uj goes to 0}
$(\wti u_j-\wti v_j)(q_j)\rightarrow 0$.
\end{claim}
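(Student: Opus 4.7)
The plan is a contradiction argument via a further blow-up combined with the rigidity of non-negative affine functions on $\R^n$. Assuming the claim fails, the equality (\ref{eq:der estimate:size on q_j}) together with $\wti g=\lambda_j^2 g$ gives
\[
(\wti u_j-\wti v_j)(q_j)\;=\;|\wti\nabla(\wti u_j-\wti v_j)|_{\wti g}(q_j)\;=\;|\nabla(u_j-v_j)|_g(q_j)\;\leq\;2K,
\]
so after passing to a subsequence one may assume $(\wti u_j-\wti v_j)(q_j)\to c$ for some $c\in(0,2K]$.

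To extract a limit I would re-center the rescaled space $(M,\lambda_j^2 g)$ at $q_j$ (rather than at $q_j'$) and translate the graph functions vertically by $-\wti v_j(q_j)$, so that $\wti v_j(q_j)=0$ and $\wti u_j(q_j)=(\wti u_j-\wti v_j)(q_j)\in(0,2K]$ is bounded. The contradiction hypothesis forces $\rho_j\lambda_j\to\infty$, and (\ref{eq:der estimate:2nd form}) rescales to
\[
|\wti A_{\Sigma_j}|_{\wti g}+|\wti A_{\Gamma_j}|_{\wti g}\;\leq\;\frac{2C}{\lambda_j\rho_j}\;\longrightarrow\;0
\]
on any fixed $\wti g$-ball about $q_j$. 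Together with the uniform gradient bounds $|\wti\nabla\wti u_j|,|\wti\nabla\wti v_j|\leq K$ and standard $C^{2,\alpha}$ regularity for the minimal surface equation, Arzel\`a--Ascoli should yield (up to a further subsequence) $C^{2,\alpha}_{\mathrm{loc}}$ limits $\wti u_j\to u_\infty$ and $\wti v_j\to v_\infty$ on $\R^n$. Since $|A|\equiv 0$ in the limit, both $u_\infty$ and $v_\infty$ are affine functions on $\R^n$.

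To close, I would pass positivity to the limit: from $\wti u_j-\wti v_j>0$ one obtains $u_\infty-v_\infty\geq 0$ on all of $\R^n$; a non-negative affine function on $\R^n$ is necessarily a non-negative constant, so $u_\infty-v_\infty\equiv c'\geq 0$. Evaluating at $q_j$ (which stays at the origin under this centering), $c'=c>0$. Then $C^1_{\mathrm{loc}}$ convergence gives
\[
(\wti u_j-\wti v_j)(q_j)\;=\;|\wti\nabla(\wti u_j-\wti v_j)|_{\wti g}(q_j)\;\longrightarrow\;|\nabla(u_\infty-v_\infty)|(0)\;=\;0,
\]
which contradicts $c>0$.

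The main technical obstacle I foresee is the base-point bookkeeping: in the excerpt's centering at $q_j'\in\Sigma_j$, the image of $q_j$ can drift to infinity, because $\wti u_j(q_j)=\lambda_j u_j(q_j)$ is not a priori bounded---only the difference $(\wti u_j-\wti v_j)(q_j)$ is. Recentering at $q_j$ and using the vertical translation invariance of the minimal surface equation---legitimate in the Euclidean blow-up limit---is what makes the compactness step work cleanly.
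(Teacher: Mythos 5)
Your argument is the same blow-up argument the paper uses, dressed in slightly different language: where the paper says "the two limit hyperplanes are parallel because $u_j-v_j>0$," you say "the limit of $\wti u_j-\wti v_j$ is a non-negative affine function on $\R^n$, hence constant." Both reduce to the same rigidity fact and both finish by feeding $|\wti\nabla(\wti u_j-\wti v_j)(q_j)|\to 0$ back into the normalization (\ref{eq:der estimate:size on q_j}). Two remarks on the differences. First, the contradiction wrapper is unnecessary: your argument proves $|\wti\nabla(\wti u_j-\wti v_j)(q_j)|\to 0$ unconditionally, so you can conclude directly rather than assuming $(\wti u_j-\wti v_j)(q_j)\to c>0$. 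Second, the base-point concern you flag is legitimate to raise as an expository point ($\wti u_j(q_j)=\lambda_j u_j(q_j)$ need not be bounded), but it is not a gap in the paper's version: the paper passes to the limit at the level of the \emph{surfaces} $(\Sigma_j,\lambda_j^2 g,q_j')$ and $(\Gamma_j,\lambda_j^2 g,q_j')$, which stay at bounded mutual distance because $(\wti u_j-\wti v_j)(q_j)\le K$; the parallelism of the limit hyperplanes and the resulting gradient collapse do not require $\wti u_j,\wti v_j$ individually to converge. Your fix of subtracting the constant $\wti v_j(q_j)$ from both graph functions achieves the same thing; just be aware that a "vertically translated" graph function is not an exact minimal-graph function in a curved ambient, so the $C^{2,\alpha}$ control you invoke should really be derived from the rescaled second fundamental form bound (\ref{eq:der estimate:2nd form}), which is invariant under adding constants, rather than from the minimal surface equation applied to the translated function.
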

\begin{proof}[Proof of Claim \ref{claim:uj goes to 0}]
Note that 
\[|\wti\nabla(\wti u_j-\wti v_j)(q_j)|_{\lambda_j^2g}=|\nabla(u_j-v_j)(q_j)|_g.\]
Then by our assumptions, it is bounded from above by $K$. Then using (\ref{eq:der estimate:size on q_j}), we have that $(\wti u-\wti v_j)(q_j)$ is bounded from above by $K$.

Recall that (\ref{eq:der estimate:2nd form}) implies that $(\Sigma_j,\lambda_j^2g,q_j')$ and $(\Gamma_j,\lambda_j^2g,q_j')$ locally smoothly converge to hyperplanes. By the assumption of $u_j-v_j>0$, such two limit hyperplanes are paralleling to each other. This deduces that $|\wti\nabla(\wti u_j-\wti v_j)(q_j)|\rightarrow 0$. Using (\ref{eq:der estimate:size on q_j}) again, we conclude that $(\wti u_j-\wti v_j)(q_j)\rightarrow 0$.
\end{proof}

Set $h_j(x)=(\wti u_j-\wti v_j)(x)/(\wti u_j-\wti v_j)(q_j)$. Then $h_j$ converges to a positive harmonic function of $\mb R^n$. Hence it is a constant. On the other hand, 
\[|\wti \nabla h_j(q_j)|=\frac{|\wti \nabla(\wti u_j-\wti v_j)(q_j)|}{(\wti u_j-\wti v_j)(q_j)}=1,\]
which leads to a contradiction.

\underline{We now prove the second inequality.} Similarly, suppose not, there exist $K>0$ and sequence of $r_j>0$, $p_j\in\mc B(p;1-r_j)$ and $V_j\subset \mc B(p;1)$ so that $\{\Sigma_j\}$ and $\{\Gamma_j\}$ are two sequences of minimal graphs over $\mc B(p_j;r_j)\setminus V_j$ with positive graph function $u_j,v_j$ satisfying
\[u_j(x)-v_j(x)>0,\ \ |u_j(x)|+|v_j(x)|<1/j,\ \ |\nabla u_j(x)|+|\nabla v_j(x)|<K , \]
and
\[\sup_{x\in \mc B(p_j;r_j)\setminus V_j}\dist^2_\mc N(x,\partial\mc B(p_j;r_j)\cup V_j)\cdot\frac{|\nabla^2\log (u_j-v_j)(x)|}{(u_j-v_j)(x)}>j.\]
Take $q_j\in\mc B(p_j;r_j)\setminus V_j$ so that 
\begin{align*}
&\dist^2_\mc N(q_j,\partial\mc B(p_j;r_j)\cup V_j)\cdot\frac{|\nabla^2\log (u_j-v_j)(q_j)|}{(u_j-v_j)(q_j)}\\
&=\sup_{x\in \mc B(p_j;r_j)\setminus V_j}\dist^2_\mc N(x,\partial\mc B(p_j;r_j)\cup V_j)\cdot\frac{|\nabla^2\log (u_j-v_j)(x)|}{(u_j-v_j)(x)}.
\end{align*}
Set 
\[\lambda_j=\sqrt{\frac{|\nabla^2\log (u_j-v_j)(q_j)|}{(u_j-v_j)(q_j)}} \text{\ \   and\ \    } \rho_j=\frac{1}{2}\cdot\dist_\mc N(q_j,\partial\mc B(p_j;r_j)\cup V_j).\]
Take $q_j'\in\Sigma_j$ so that its projection to $N$ is $q_j$. Then by (\ref{eq:der estimate:2nd form}), $(\mc B(q_j';\rho_j),\lambda^2_jg,q_j')$ locally smoothly converges to a minimal graph over a hyperplane. Then it is a hyperplane by Bernstein theorem.

Now denote by $\wti \nabla$ the Levi-Civita connection under the metric $\lambda_j^2g$. Denote by $\wti u_j,\wti v_j$ the graph functions of $(\Sigma_j,\lambda_j^2g)$ and $(\Gamma_j,\lambda_j^2g)$ over $(\mc B(q_j;\rho_j),\lambda_j^2g)$. Then we have
\begin{equation}\label{eq:der estimate:hessian change}
\frac{|\wti\nabla^2(\wti u_j-\wti v_j)(q_j)|_{\lambda_j^2g}}{(\wti u-\wti v_j)(q_j)}=\frac{|\nabla^2(u_j-v_j)(q_j)|_g}{\lambda^2_j(u_j-v_j)(q_j)}=1,
\end{equation}

\begin{claim}\label{claim:difference goes to 0}
$(\wti u_j-\wti v_j)(q_j)\rightarrow 0$.
\end{claim}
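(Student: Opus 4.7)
The plan is to mirror the structure of the proof of Claim \ref{claim:uj goes to 0}, substituting (\ref{eq:der estimate:hessian change}) for (\ref{eq:der estimate:size on q_j}) and using a Hessian limit in place of a gradient limit.

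First I would observe that the supremum choice of $q_j$, together with the hypothesis that the supremum exceeds $j$, forces
\[
\lambda_j^2 \cdot \dist^2_\mc N(q_j,\partial \mc B(p_j;r_j) \cup V_j) > j,
\]
so that $\lambda_j \rho_j \to \infty$. Since $\rho_j \leq 1/2$ is bounded, this forces $\lambda_j \to \infty$. Rescaling the ambient metric by $\lambda_j^2$, the curvature bound (\ref{eq:der estimate:2nd form}) yields $|A^{\Sigma_j}|_{\lambda_j^2 g}, |A^{\Gamma_j}|_{\lambda_j^2 g} = O(1/(\lambda_j \rho_j))$ on compact subsets around $q_j'$, and so both surfaces converge locally smoothly (by a Bernstein-type argument, as in Claim \ref{claim:uj goes to 0}) to hyperplanes, with the positivity $\wti u_j - \wti v_j > 0$ forcing these hyperplanes to be parallel.

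Next I would upgrade this to smooth convergence of the graph functions themselves. Since $|\wti\nabla \wti u_j|_{\lambda_j^2 g} = |\nabla u_j|_g < K$ and similarly for $\wti v_j$, the rescaled graph functions have uniformly bounded gradient; combined with the second fundamental form bound above, standard interior Schauder estimates for the minimal graph PDE give $C^2$ convergence of $\wti u_j, \wti v_j$ to affine functions on fixed-radius $\lambda_j^2 g$-balls around $q_j$. Affine functions have vanishing Hessian, hence $|\wti\nabla^2(\wti u_j - \wti v_j)(q_j)|_{\lambda_j^2 g} \to 0$, and combining with (\ref{eq:der estimate:hessian change}) yields $(\wti u_j - \wti v_j)(q_j) \to 0$, as desired.

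The main obstacle is precisely this upgrade from Hausdorff convergence of the surfaces to $C^2$ convergence of their graph functions at $q_j$: without it, the limit Hessians could a priori fail to vanish. This step relies crucially on the minimal surface structure, through elliptic regularity applied to the (uniformly elliptic) minimal graph equation, to promote the uniform $C^1$ bound and the uniform second fundamental form bound into uniform higher-order bounds in $j$.
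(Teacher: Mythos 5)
Your argument follows the same route as the paper's: rescale by $\lambda_j^2 g$ (with $\lambda_j\rho_j\to\infty$), use the curvature estimate (\ref{eq:der estimate:2nd form}) to get local smooth convergence of both rescaled surfaces to hyperplanes (and hence $C^2$ convergence of the graph functions), deduce $|\wti\nabla^2(\wti u_j-\wti v_j)(q_j)|_{\lambda_j^2 g}\to 0$, and conclude via (\ref{eq:der estimate:hessian change}). One small remark: unlike in Claim~\ref{claim:uj goes to 0}, you do not actually need the positivity $\wti u_j-\wti v_j>0$ to force the two limit hyperplanes to be parallel here --- the Hessian of any affine function vanishes, so the rescaled Hessian of the difference goes to zero regardless of the relative slopes; the parallelism observation is only required for the first-derivative estimate, where the gradient of the difference must vanish in the limit.
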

\begin{proof}[Proof of Claim \ref{claim:difference goes to 0}]
Recall that (\ref{eq:der estimate:2nd form}) implies that $(\Sigma_j,\lambda_j^2g,q_j')$ and $(\Gamma_j,\lambda_j^2g,q_j')$ locally smoothly converge to hyperplanes. Hence $|\wti \nabla^2(\wti u_j-\wti v_j)|\rightarrow 0$. Using (\ref{eq:der estimate:hessian change}), we have $ (\wti u_j-\wti v_j)(q_j)\rightarrow 0$.
\end{proof}

It follows that $(\Sigma_j,\lambda_j^2g,q_j')$ and $(\Gamma_j,\lambda_j^2g,q_j')$ locally smoothly converge to a same hyperplanes. Set $h_j(x)=(\wti u_j-\wti v_j)(x)/(\wti u_j-\wti v_j)(q_j)$. Then $h_j$ converges to a positive harmonic function of $\mb R^n$. Hence it is a constant. On the other hand, 
\[|\wti \nabla^2 h_j(q_j)|=\frac{|\wti \nabla^2(\wti u_j-\wti v_j)(q_j)|}{(\wti u_j-\wti v_j)(q_j)}=1,\]
which leads to a contradiction.

Thus, we have prove Lemma \ref{lem:der estimate}.
\end{proof}

\section{rearrange lemma}\label{sec:appendix:rearrange}
The following lemma is used in this paper frequently. 
\begin{lemma}\label{lem:number lemma}
	Let $\{\alpha_j\}_{i=1}^I$ be a sequence of positive numbers with $\alpha_j\geq1$. Then there exist $2N(\leq 2I)$ non-negative integers $\{k_j\}_{j=1}^{2N}$ such that 
	\begin{itemize}
		\item $k_{j+1}-k_{j}\geq 3$;
		\item $k_{2j}-k_{2j-1}\leq 10I+1$.
		\item $\{\alpha_j\}_{i=1}^I\subset \bigcup_{j}[k_{2j-1}+1,k_{2j}-1]$.
	\end{itemize}
\end{lemma}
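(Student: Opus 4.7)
The plan is to sort the $\alpha_i$'s in non-decreasing order, say $\beta_1\le\beta_2\le\cdots\le\beta_I$, and pass to the integer sequence $a_i=\lfloor \beta_i\rfloor$. Since each $\alpha_i\ge 1$ we have $a_i\ge 1$, and $(a_i)$ is a non-decreasing sequence of positive integers with consecutive gaps $g_i:=a_{i+1}-a_i\ge 0$. The key observation is that among the at most $I-1$ gaps $g_i$, those that are large should serve as the separators between the desired intervals, while all other gaps should be absorbed inside a single cluster.

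More precisely, I would partition $\{1,\dots,I\}$ into $N$ maximal consecutive runs $\{i_\ell,i_\ell+1,\dots,j_\ell\}$ ($\ell=1,\dots,N$) by cutting precisely at every index $i$ with $g_i\ge 6$. Since there are at most $I-1$ gaps in total, one has $N\le I$. Then I would set
\[
k_{2\ell-1}=a_{i_\ell}-1,\qquad k_{2\ell}=a_{j_\ell}+2,\qquad \ell=1,\dots,N,
\]
which are non-negative integers since $a_{i_\ell}\ge 1$.

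The three required conditions then follow from routine bookkeeping. For the containment condition, any $i_\ell\le i\le j_\ell$ satisfies
\[
k_{2\ell-1}+1=a_{i_\ell}\le a_i\le \beta_i<a_i+1\le a_{j_\ell}+1=k_{2\ell}-1,
\]
so each $\beta_i$ lies in $[k_{2\ell-1}+1,k_{2\ell}-1]$. For the width bound, all gaps inside one run are at most $5$ and there are at most $I-1$ of them, so
\[
k_{2\ell}-k_{2\ell-1}=a_{j_\ell}-a_{i_\ell}+3=\sum_{i=i_\ell}^{j_\ell-1}g_i+3\le 5(I-1)+3<10I+1.
\]
For the separation between consecutive $k$'s: within one cluster $k_{2\ell}-k_{2\ell-1}\ge 3$ is immediate from $a_{j_\ell}\ge a_{i_\ell}$, and across clusters the boundary gap satisfies $g_{j_\ell}\ge 6$, giving
\[
k_{2\ell+1}-k_{2\ell}=(a_{i_{\ell+1}}-1)-(a_{j_\ell}+2)=g_{j_\ell}-3\ge 3.
\]

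The argument is purely combinatorial and presents no real obstacle. The only delicate point is calibrating the cluster-boundary threshold (the value $6$ above), which must be chosen large enough to guarantee $k_{2\ell+1}-k_{2\ell}\ge 3$, yet small enough that the within-cluster diameter estimate still fits comfortably inside $10I+1$.
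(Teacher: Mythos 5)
Your proof is correct and follows essentially the same strategy as the paper's: sort and floor the $\alpha_j$'s, cut at integer gaps of size at least $6$, and enclose each resulting cluster in an interval $[k_{2\ell-1}+1,k_{2\ell}-1]$ with a small buffer on each side. The only cosmetic difference is that you obtain the $10I+1$ width bound by summing the small gaps ($\le 5(I-1)+3$), whereas the paper phrases the construction via inductive $\inf/\sup$ choices and invokes a pigeonhole argument; the underlying combinatorial mechanism is the same.
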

\begin{proof}
Take $k_1=\max\{\inf [\alpha_j]-1,0\}$. Then define $k_{2s}$ from $k_{2s-1}$ by
	\[k_{2s}:=\inf\{t\in\mathbb Z:t\geq k_{2s-1},\alpha_j\notin(t,t+6) \text{\ holds for all\ }j \}+2.\]
Then define $k_{2s+1}$ from $k_{2s}$ by
	\[k_{2s+1}:=\sup\{t\in\mathbb Z:t\geq k_{2s},\alpha_j\notin(k_{2s},t) \text{\ holds for all\ }j \}-1.\]
It remains to check that they satisfy all the requirements. Indeed, by the choice of $k_{2s}$, $\alpha_j\notin(k_{2s}-2,k_{2s}+4)$. Thus, $k_{2s+1}+1\geq k_{2s}+4$, which implies $k_{2s+1}-k_{2s}\geq 3$.

By the choice of $k_{2s+1}$, we can see that $[k_{2s+1}+1,k_{2s+1}+2)\cap\{\alpha_j\}\neq \emptyset$. Thus, $k_{2s+2}-2\geq k_{2s+1}+1$, which implies $k_{2s+1}-k_{2s+1}\geq 3$. Finally, note that if $k_{2s}-k_{2s-1}>10I+1$, there exists $t\in (k_{2s-1}+2, k_{2s}-6)$ so that $\alpha_j\notin(t,t+6)$ for all $j$ since $\{\alpha_j\}$ contains only $I$ numbers. This contradicts the definition of $k_{2s}$.  
\end{proof}

\section{connectedness lemma}\label{sec:appendix:connectedness}
\begin{lemma}\label{lem:enlarge interior radius}
Then there exists a constant $C_0,\epsilon>0$ so that for any $r_1,r_2$ satisfy $C_0Ir\leq r_1\leq r_2<\epsilon$, there exists a $C^1$ curve $\gamma:[0,1]\rightarrow \mc A(p;r_1,r_2)\setminus\bigcup_{j=1}^I\mc B(q_j;r)$ satisfying
\begin{itemize}
\item $\gamma(0)\in\partial \mc B(p;r_1)$ and $\gamma(1)\in\partial \mc B(p;r_2)$;
\item $\mathrm{Length}(\gamma)\leq C_0 (r_2-r_1)$;
\item $\dist(\gamma,\cup_{j=1}^IB(q_j,r))\geq r_1/(C_0I)$.
\end{itemize} 
\end{lemma}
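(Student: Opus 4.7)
I would take $\gamma$ to be a radial geodesic from $p$ in a carefully chosen direction. Set $\rho := r_1/(C_0 I)$, where $C_0$ will be fixed later. Only obstacles with $\mc B(q_j; r+\rho) \cap \mc A(p; r_1, r_2) \neq \emptyset$ can possibly obstruct us, and for any such $j$ the triangle inequality together with $r \leq r_1/(C_0 I)$ yields $\dist(p, q_j) \geq r_1 - (r+\rho) \geq r_1/2$, provided $C_0 \geq 4$. Obstacles entirely inside $\mc B(p;r_1-r-\rho)$ or outside $\mc B(p;r_2+r+\rho)$ are irrelevant.

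For each relevant obstacle I would estimate the angular ``shadow''
\[
A_j := \bigl\{ v \in S_p \mc N : \exp_p(tv) \in \mc B(q_j; r+\rho) \text{ for some } t \in [r_1, r_2] \bigr\}.
\]
Working in exponential normal coordinates at $p$, and shrinking $\epsilon = \epsilon(\mc N)$ so that the metric is sufficiently close to Euclidean on $\mc B(p;\epsilon)$, the elementary comparison of a ball of radius $r+\rho$ centered at distance $d_j \geq r_1/2$ with its shadow on $S_p\mc N$ gives
\[
\mu(A_j) \leq C \bigl((r+\rho)/\dist(p,q_j)\bigr)^{n-1} \leq C (C_0 I)^{-(n-1)}.
\]
Summing over the at most $I$ relevant obstacles and using $n \geq 2$,
\[
\mu\Bigl(\bigcup_j A_j\Bigr) \leq C\, I \cdot (C_0 I)^{-(n-1)} \leq C/C_0,
\]
which is strictly less than the total measure of $S_p\mc N$ once $C_0$ is sufficiently large, depending only on $\mc N$.

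Finally, I pick any $v_0 \in S_p \mc N \setminus \bigcup_j A_j$ and set $\gamma(t) := \exp_p\bigl((r_1 + t(r_2 - r_1)) v_0\bigr)$ for $t \in [0,1]$. Then $\gamma$ is smooth, has endpoints on $\partial \mc B(p;r_1)$ and $\partial \mc B(p;r_2)$, lies entirely in $\mc A(p;r_1,r_2)$, has length exactly $r_2 - r_1 \leq C_0(r_2-r_1)$, and by construction $\dist(\gamma, \mc B(q_j;r)) \geq \rho = r_1/(C_0 I)$ for every $j$. The only mild subtlety is the Euclidean-to-Riemannian comparison of the angular measure, but this is routine in the fixed ball $\mc B(p;1)$ by taking $\epsilon$ small enough that exponential coordinates have negligible distortion.
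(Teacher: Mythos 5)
The paper states this lemma in Appendix E (the "connectedness lemma") but provides no proof whatsoever, so there is no argument to compare against — your proof fills a genuine gap in the manuscript. Your approach is correct, and it is also the natural one.

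The key points all check out. The reduction to obstacles whose $(r+\rho)$-enlargement meets the annulus, the resulting lower bound $\dist(p,q_j)\geq r_1-(r+\rho)\geq r_1/2$ once $C_0\geq 4$, the angular shadow bound $\mu(A_j)\leq C\bigl((r+\rho)/\dist(p,q_j)\bigr)^{n-1}\leq C(C_0I)^{-(n-1)}$, and the union bound $\mu(\bigcup_j A_j)\leq C\,I\cdot(C_0I)^{-(n-1)}\leq C/C_0$ are all correct. The last inequality uses $n\geq 2$, which holds in this paper since $3\leq n+1\leq 7$; it is worth noting explicitly (as you do) that for $n=1$ the shadow of a single ball has full measure and the argument would fail. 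Once $C_0$ is chosen large (depending only on $\mc N$, through curvature bounds and the injectivity radius controlling the Euclidean-to-Riemannian angular distortion on $\mc B(p;\epsilon)$), a free direction $v_0$ exists, and the radial geodesic $\gamma(t)=\exp_p((r_1+t(r_2-r_1))v_0)$ has the stated endpoints, length exactly $r_2-r_1$, and distance at least $\rho=r_1/(C_0I)$ from every $\mc B(q_j;r)$ by construction of $A_j$. This is a clean, complete proof; the only thing I would add for polish is a sentence saying that the compactness of $\mc N$ lets one choose $\epsilon$ and the distortion constant $C$ uniformly in $p$, since the lemma is invoked elsewhere in the paper at various centers.
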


\section{Proof of (\ref{eq:result by adding with suitable coeff}) }\label{sec:appendix:suitable coeff}
In this section, we give the proof of (\ref{eq:result by adding with suitable coeff}). Such a fundamental process has been used frequently in this paper.
\begin{proof}[Proof of (\ref{eq:result by adding with suitable coeff})]
For simplicity, we define the following notions:
\begin{gather*}
A:=\mc I(p;\epsilon r)-\gamma\mc I(p;2^{k_{2N}}\theta_1\epsilon^{3/4}r^{5/4}),\\
B:=\sum_{j=j'+1}^N\gamma^{2N-2j+1}\Big[\mc I(p;2^{k_{2j}}\theta_1\epsilon^{3/4}r^{5/4})-\gamma\mc I(p;2^{k_{2j-1}}\theta_1\epsilon^{3/4}r^{5/4}) \Big],\\
C:=\sum_{j=j'}^{N-1}\gamma^{2N-2j}\Big[\mc I(p;2^{k_{2j+1}}\theta_1\epsilon^{3/4}r^{5/4})-\gamma\mc I(p;2^{k_{2j}}\theta_1\epsilon^{3/4}r^{5/4}) \Big],\\
D:=\gamma^{2N-2j'+1}\Big[\mc I(p;2^{k_{2j'}}\theta_1\epsilon^{3/4}r^{5/4})-\gamma\mc I(y_1;\theta|y_1|)\Big].
\end{gather*}

Then we have
\begin{gather*}
A\geq\frac{\tau_0}{4^{I+2}}(\log(\epsilon/r)-\log(2^{k_{2N}}\theta_1\epsilon^{3/4}r^{5/4})),\\
B\geq \sum_{j=j'+1}^{N}\gamma^{2N-2j+1}\Big[\frac{\tau_0}{4^{I+2}}(\log (2^{k_{2j}}\theta_1\epsilon^{3/4}r^{5/4})-\log (2^{k_{2j-1}}\theta_1\epsilon^{3/4}r^{5/4})) \Big],\\
D\geq \gamma^{2N-2j'+1}\Big[\frac{\tau_0}{4^{I+2}}(\log(2^{k_{2j'}}\theta_1\epsilon^{3/4}r^{5/4})-\log(\theta_1|y_1|))\Big],
\end{gather*}
and 
\begin{align*}
C&\geq 0=\sum_{j=j'}^{N-1}\gamma^{2N-2j}\Big[\frac{\tau_0}{4^{I+2}}(\log (2^{k_{2j+1}}\theta_1\epsilon^{3/4}r^{5/4})-\log (2^{k_{2j}}\theta_1\epsilon^{3/4}r^{5/4})) \Big]+\\
&+\sum_{j=j'}^{N-1}-\gamma^{2N-2j}\Big[\frac{\tau_0}{4^{I+2}}(k_{2j+1}-k_{2j})\log 2\Big].
\end{align*}
Thus,
\begin{align*}
A+B+C+D&\geq \gamma^{2N}\frac{\tau_0}{4^{I+2}}\Big[\log(\epsilon/r)-\log(\theta_1|y_1|))\Big]-\sum_{j=j'}^{N-1}\Big[\frac{\tau_0}{4^{I+2}}(k_{2j+1}-k_{2j})\log 2\Big]\\
&\geq \gamma^{2N}\frac{\tau_0}{4^{I+2}}\Big[\log(\epsilon / r)-\log(\theta_1|y_1|)\Big]-I(10I+1)\frac{\tau_0}{4^{I+2}}\log 2.
\end{align*}
On the other hand,
\begin{align*}
A+B+C+D=\mc I(p;\epsilon r)-\gamma^{2N-2j'+2}\mc I(y_1;\theta_1|y_1|)\leq \mc I(p;\epsilon r)-\gamma^{2N}\mc I(y_1;\theta_1|y_1|)/4^{I+2}.
\end{align*}
Then (\ref{eq:result by adding with suitable coeff}) follows by setting $c_0=\gamma^{2N}/r^{I+2}$ and $c_1=\gamma^{-2N}I(10I+1)\log 2$.
\end{proof}
\begin{bibdiv}
\begin{biblist}

\bib{ACS17}{article}{
      author={Ambrozio, Lucas},
      author={Carlotto, Alessandro},
      author={Sharp, Ben},
       title={Compactness analysis for free boundary minimal hypersurfaces},
        date={2018},
        ISSN={0944-2669},
     journal={Calc. Var. Partial Differential Equations},
      volume={57},
      number={1},
       pages={57:22},
         url={https://doi.org/10.1007/s00526-017-1281-y},
      review={\MR{3740402}},
}

\bib{CS85}{article}{
      author={Choi, Hyeong~In},
      author={Schoen, Richard},
       title={The space of minimal embeddings of a surface into a
  three-dimensional manifold of positive {R}icci curvature},
        date={1985},
        ISSN={0020-9910},
     journal={Invent. Math.},
      volume={81},
      number={3},
       pages={387\ndash 394},
         url={https://doi.org/10.1007/BF01388577},
      review={\MR{807063}},
}

\bib{CM02}{article}{
      author={Colding, Tobias~H.},
      author={Minicozzi, William~P., II},
       title={On the structure of embedded minimal annuli},
        date={2002},
        ISSN={1073-7928},
     journal={Int. Math. Res. Not.},
      number={29},
       pages={1539\ndash 1552},
         url={https://doi.org/10.1155/S1073792802112128},
      review={\MR{1907204}},
}

\bib{FL}{article}{
      author={Fraser, Ailana},
      author={Li, Martin Man-chun},
       title={Compactness of the space of embedded minimal surfaces with free
  boundary in three-manifolds with nonnegative {R}icci curvature and convex
  boundary},
        date={2014},
        ISSN={0022-040X},
     journal={J. Differential Geom.},
      volume={96},
      number={2},
       pages={183\ndash 200},
         url={http://projecteuclid.org/euclid.jdg/1393424916},
      review={\MR{3178438}},
}

\bib{GT}{book}{
      author={Gilbarg, David},
      author={Trudinger, Neil~S.},
       title={Elliptic partial differential equations of second order},
      series={Classics in Mathematics},
   publisher={Springer-Verlag, Berlin},
        date={2001},
        ISBN={3-540-41160-7},
        note={Reprint of the 1998 edition},
      review={\MR{1814364}},
}

\bib{GLZ16}{article}{
      author={Guang, Qiang},
      author={Li, Martin Man-chun},
      author={Zhou, Xin},
       title={Curvature estimates for stable free boundary minimal
  hypersurfaces},
        date={2018},
     journal={Journal f{\"u}r die reine und angewandte Mathematik (Crelles
  Journal)},
}

\bib{GZ18}{article}{
      author={Guang, Qiang},
      author={Wang, Zhichao},
      author={Zhou, Xin},
       title={Compactness and generic finiteness for free boundary minimal
  hypersurfaces {(I)}},
        date={2018},
     journal={arXiv preprint arXiv:1803.01509},
}

\bib{MN16}{article}{
      author={Marques, Fernando~C.},
      author={Neves, Andr\'e},
       title={Morse index and multiplicity of min-max minimal hypersurfaces},
        date={2016},
        ISSN={2168-0930},
     journal={Camb. J. Math.},
      volume={4},
      number={4},
       pages={463\ndash 511},
      review={\MR{3572636}},
}

\bib{SSY}{article}{
      author={Schoen, R.},
      author={Simon, L.},
      author={Yau, S.~T.},
       title={Curvature estimates for minimal hypersurfaces},
        date={1975},
        ISSN={0001-5962},
     journal={Acta Math.},
      volume={134},
      number={3-4},
       pages={275\ndash 288},
         url={https://doi.org/10.1007/BF02392104},
      review={\MR{0423263}},
}

\bib{SS}{article}{
      author={Schoen, Richard},
      author={Simon, Leon},
       title={Regularity of stable minimal hypersurfaces},
        date={1981},
        ISSN={0010-3640},
     journal={Comm. Pure Appl. Math.},
      volume={34},
      number={6},
       pages={741\ndash 797},
         url={http://dx.doi.org/10.1002/cpa.3160340603},
      review={\MR{634285 (82k:49054)}},
}

\bib{Sha17}{article}{
      author={Sharp, Ben},
       title={Compactness of minimal hypersurfaces with bounded index},
        date={2017},
        ISSN={0022-040X},
     journal={J. Differential Geom.},
      volume={106},
      number={2},
       pages={317\ndash 339},
         url={https://doi.org/10.4310/jdg/1497405628},
      review={\MR{3662994}},
}

\bib{Sim87}{article}{
      author={Simon, Leon},
       title={A strict maximum principle for area minimizing hypersurfaces},
        date={1987},
        ISSN={0022-040X},
     journal={J. Differential Geom.},
      volume={26},
      number={2},
       pages={327\ndash 335},
         url={http://projecteuclid.org/euclid.jdg/1214441373},
      review={\MR{906394}},
}

\bib{Whi87}{article}{
      author={White, Brian},
       title={Curvature estimates and compactness theorems in {$3$}-manifolds
  for surfaces that are stationary for parametric elliptic functionals},
        date={1987},
        ISSN={0020-9910},
     journal={Invent. Math.},
      volume={88},
      number={2},
       pages={243\ndash 256},
         url={https://doi.org/10.1007/BF01388908},
      review={\MR{880951}},
}

\bib{Whi91}{article}{
      author={White, Brian},
       title={The space of minimal submanifolds for varying {R}iemannian
  metrics},
        date={1991},
        ISSN={0022-2518},
     journal={Indiana Univ. Math. J.},
      volume={40},
      number={1},
       pages={161\ndash 200},
         url={http://dx.doi.org/10.1512/iumj.1991.40.40008},
      review={\MR{1101226 (92i:58028)}},
}

\end{biblist}
\end{bibdiv}
\bibliographystyle{amsalpha}
\end{document}